\documentclass{article}

\usepackage{etex}

\usepackage{latexsym, a4wide}
\usepackage{amsmath, rotating}
\usepackage{amsmath, rotating, color}
\usepackage{amsfonts,amssymb, amsthm}
\usepackage{amstext,amsthm,amsmath}
\usepackage{color}
\usepackage{amssymb,mathtools}
\usepackage{pictex, natbib}
\usepackage{dsfont} 
\newcommand\unnumberedfootnote[1]{ %
        \let\temp=\thefootnote %
        \renewcommand{\thefootnote}{}%
        \footnote{#1}%
        \let\thefootnote=\temp%
        \addtocounter{footnote}{-1}}

\newcommand{\1}{\mathds{1}}
\newcommand{\E}{\mathbb E}
\newcommand{\Pw}{\mathbb P}

\newcommand{\cC}{\mathcal C}
\newcommand{\cD}{\mathcal D}

\newcommand{\cG}{\mathcal G}
\newcommand{\cX}{\mathcal X}
\newcommand{\cY}{\mathcal Y}
\newcommand{\cZ}{\mathcal Z}
\newcommand{\N}{\mathbb N}
\newcommand{\R}{\mathbb R}

\newtheorem{theorem}{Theorem}
\newtheorem{proposition}{Proposition}[section]
\newtheorem{theorem*}[proposition]{Theorem}
\newtheorem{lemma}[proposition]{Lemma}
\newtheorem{corollary}[proposition]{Corollary}
\newtheorem{definition}[proposition]{Definition}
\theoremstyle{definition}
\newtheorem{remark}[proposition]{Remark}

\pagestyle{headings}
\numberwithin{equation}{section}
\setcounter{secnumdepth}{2}

\setcounter{tocdepth}{2}

\begin{document}


\newpage


\title{\LARGE Markov branching processes with disasters: extinction,
  survival and duality to $p$-jump processes}

\thispagestyle{empty}

\author{{\sc by  Felix Hermann and Peter Pfaffelhuber} \\[2ex]
  \emph{Albert-Ludwigs University Freiburg} } \date{}


\maketitle
\unnumberedfootnote
{\emph{AMS 2000 subject classification.} 
{\tt 60J80} (Primary) {\tt 60J75, 60F10} (Secondary).}

\unnumberedfootnote {\emph{Keywords and phrases.}  Piecewise
  deterministic Markov process, branching process, branching process
  with disasters, duality of Markov processes, extinction probability,
  survival probability}

\begin{abstract}
  \noindent
  A $p$-jump process is a piecewise deterministic Markov process with
  jumps by a factor of $p$. We prove a limit theorem for such
  processes on the unit interval. Via duality with respect to
  probability generating functions, we deduce limiting results for the
  survival probabilities of time-homogeneous branching processes with
  arbitrary offspring distributions, underlying binomial
  disasters. Extending this method, we obtain corresponding results
  for time-inhomogeneous birth-death processes underlying
  time-dependent binomial disasters and continuous state branching
  processes with $p$-jumps.
\end{abstract}

\section{Introduction}
Consider a population evolving according to a branching process
$\cZ'$. In addition to reproduction events, global events called
disasters occur at some random times (independent of $\cZ'$) that kill
off every individual alive with probability $1-p\in(0,1)$,
independently of each other. The resulting process of population sizes
$\cZ$ will be called a branching process subject to binomial disasters
with survival probability $p$. Provided no information regarding fitness
of the individuals in terms of resistance against disasters, this binomial
approach appears intuitive, since the survival events of single individuals
are iid. Applications span from natural disasters as floods and droughts to
effects of radiation treatment or chemotherapy on cancer cells as well as
antibiotics on populations of bacteria. Also, Bernoulli sampling comes into
mind as in the Lenski Experiment (cf. \citealp{CasanovaEtAl2016}).

\noindent
In the general setting of Bellman-Harris processes with non-lattice
lifetime-distribution subject to binomial disasters,
\cite{KaplanEtAl1975} and \cite{AthreyaKaplan1976} have studied the
almost sure asymptotic behaviour as well as asymptotics of the
expectation of the population size and showed that such processes
almost surely either go extinct or explode, giving necessary and
sufficient conditions for extinction. They also computed the limit of
the age-distribution on the set of explosion. In the special case of
homogeneous birth-death processes with binomial disasters,
\cite{BühlerPuri1989} obtained more explicit results regarding
asymptotics and normalised limit distributions as well as the
distribution of the extinction probability conditioned on the disaster
times. \cite{Bartoszynski1989} added the analysis of extinction
probabilities in a multi-type setting.  Furthermore, \cite{Peng1993},
\cite{Thilaka1998} and \cite{KumarII} studied single-type and
multi-type population models underlying binomial disasters with
survival probabilities depending on the time the last disaster
occurred. These models reflect disasters like earthquakes where
pressure builds up over time and increases severity.  A more general
disaster mechanism in a birth-death-scenario has been discussed by
\cite{Brockwell1982}, \cite{Brockwell1985}, \cite{Pakes1986} and
\cite{Pakes1989}, where the absolute population decline after a
catastrophe follows geometric, uniform or even an arbitrary
distribution independent of the population size. Additionally, the
rate of catastrophes is linear in the population size. For continuous
state branching processes with disasters according to some intensity
measure $\nu$, \cite{Bansaye2013} have studied the probability of
extinction.

We will add to this literature of branching processes with disasters
precise results for the asymptotic extinction probability at late
times. As Theorem~\ref{thm:bp-arb.off.dis} shows, if extinction
occurs, the survival probability decays exponentially at a rate which
has a phase transition. We will also be dealing with the
time-inhomogeneous case (see Theorem~\ref{thm:inhom-bdp}), and
extinction probabilities for continuous state branching processes
(CSBP) with binomial disasters (see Theorem \ref{thm:cont-state}).

\noindent
The main technique we are going to use in our study is duality. Recall
that duality of branching systems to a solution of a differential
equation has particularly proven useful for continuous state branching
processes and measure-valued processes. (See \citealp{Etheridge2001}
for an overview and the beginning of Section \ref{sec:bps} for a brief
introduction to this notion of duality.) Bringing this notion back to
a branching process in continuous time (without disasters) $\cZ$,
where every individual branches at rate $\lambda$ and has offspring
distribution with probability generating function (pgf) $h$, the
distribution at time $t$ can be computed via the duality relation
$$ \mathbb E[x^{Z_t}|Z_0=z] = X_t^{z},$$ where $X_0=x$ and $X_t$
solves
\begin{align}
  \label{eq:ODE}
  \dot X = - \lambda (X - h(X))
\end{align}
(cf.\ \citealp{AthreyaNey1972}, Chapter III.3). We will generalize
this equation in order to include binomial disasters. Here, the dual
process will be a piecewise deterministic Markov process (PDMP) $\cX$
on $[0,1]$, where $1-\cX$ evolves according to \eqref{eq:ODE} and
jumps by a factor of $p$ at the rate of the disasters. Such processes
will be called $p$-jump processes below. In Theorem
\ref{thm:convergence-$p$-jump-pros} we will give general limit results
for these processes, which become more precise and concise under
concavity conditions shown in Corollary \ref{cor:p-jump-concave}.
These findings will then translate into limit and asymptotics results
for survival and extinction probabilities of branching processes with
binomial disasters in Theorem \ref{thm:bp-arb.off.dis}.

\noindent
The results of Theorem~\ref{thm:bp-arb.off.dis} will be expanded in
two directions. First, we are dealing with the time-inhomogeneous case
in Theorem~\ref{thm:inhom-bdp}, i.e.\ branching rate, pgf and disaster
rate may depend on time. Here -- using results of \cite{Kendall1948}
for the case without disasters -- we are able to derive the pgf of a
binary branching process subject to binomial disasters, conditioned on
the times of disasters -- similar to the approach of
\cite{BühlerPuri1989} for the homogeneous case. In this case, we also
give in Proposition \ref{prop:inhom-limit} some limits of
pgfs. Second, we apply the duality technique to continuous state
branching processes (CSBP) with binomial disasters. Here, we derive in
Theorem \ref{thm:cont-state} limit results for the extinction
probabilities.

~

\noindent
The manuscript is organised as follows. In Section~\ref{S:res}, we
give our main results on $p$-jump processes with Theorem
\ref{thm:convergence-$p$-jump-pros} and Corollary \ref{cor:p-jump-concave}.
The results on (time-homogeneous) branching processes with disasters are
collected in Theorem~\ref{thm:bp-arb.off.dis}. The case of a birth-death
process, i.e.\ a binary branching process, are given in
Corollary~\ref{cor:results-homog-bd-proc} and extended further to the
time-inhomogeneous case in Theorem~\ref{thm:inhom-bdp}. The CSBP with
disasters is treated in Theorem \ref{thm:cont-state}. In
Section~\ref{sec:pdmps}, we will prove Theorem
\ref{thm:convergence-$p$-jump-pros} and Corollary \ref{cor:p-jump-concave}.
The duality of branching processes with disasters and $p$-jump PDMP will be
established in Section~\ref{sec:bps}, where we will also prove
Theorem~\ref{thm:bp-arb.off.dis}. For the time-inhomogeneous case, we
first need in Section~\ref{sec:reg-var} some results on regularly
varying functions, as collected in Theorem
\ref{thm:asymptotics-of-D-integral}, which might be of interest in their
own right. The proof of Theorem~\ref{thm:inhom-bdp} is then given in
Section~\ref{sec:pr-thm3}.

\section{Results}\label{S:res}

\subsection{$p$-jump Processes}

Let us begin by clarifying the notion of $p$-jump processes.

\begin{definition}\label{def:p-jump-process}\ 
  Let $I=[0,\upsilon]$ for $\upsilon>0$ or $I=\R^+:=[0,\infty)$. Then, let $\alpha:I\to\R$
  with $\alpha(0)\geq0$ and $\alpha(\upsilon)\leq0$, if $I=[0,\upsilon]$. Furthermore, let
  $p\in[0,1]$ and $\cX$ be a right-continuous Markov process on $I$, that performs unit-rate
  jumps from a state $x$ to $px$ and between jumps fulfils $\dot X_t=\alpha(X_t)$. Such a
  process has generator
  \begin{align}
    \cG_\cX f(x)
     &= f(px) - f(x) + \alpha(x)f'(x)\label{eq:generator-of-p-jump-process}
  \end{align}
  for $f\in\cC^1(I)$ and is called a \emph{$p$-jump process with drift
  $\alpha$ on $I$}.
\end{definition}

\begin{remark}\label{rem:def-p-jump-pros}\ 
  \begin{enumerate}
    \item
      Note that such a process is uniquely characterised by $\alpha$, whenever
      $\dot X_t=\alpha(X_t)$ has a unique solution on $I\cap[\varepsilon,\infty)$
      for every $\varepsilon>0$. To ensure this, we will use the Lipschitz-continuity
      conditions $(C_1)$ and $(C_2)$ in Theorem \ref{thm:convergence-$p$-jump-pros}.
      The bounds of $\alpha(0)$ and $\alpha(\upsilon)$ guarantee that the process does
      not leave the interval $I$, such that $\cX$ is well-defined.
    \item
      Due to its multiplicative jumps, such a process can only have $0$ as an
      absorbing state. This happens, iff $\alpha(0)=0$.
    \item
      $1$-jump processes are deterministic, since their jumps have no effect,
      while $0$-jump processes always jump to 0. These two special cases will
      be left aside in Theorem \ref{thm:convergence-$p$-jump-pros} but considered
      in the following Corollary \ref{cor:p-jump-concave} for concave $\alpha$,
      where a more concise conclusion is possible.
  \end{enumerate}
\end{remark}

\noindent
At first, we present our most general limit results for $p$-jump processes.

\begin{theorem}[Convergence of $p$-jump-processes]\label{thm:convergence-$p$-jump-pros}
  Let $I$ and $\alpha$ be as in Definition \ref{def:p-jump-process}, $p\in(0,1)$
  and $X_0\in I$. Also, suppose if $I=\R^+$ that $s_\alpha:=\sup\{x:\alpha(x)>0\}<\infty$.
  Furthermore, let $\alpha'_0:=\lim_{x\to0}\frac1x\alpha(x)$ and assume that
  $\alpha$ satisfies one of the following:
  \begin{itemize}
    \item[$(C_1)$]
      $\alpha$ is Lipschitz-continuous on $I$ or
    \item[$(C_2)$]
      $\alpha$ is Lipschitz-continuous on $I\cap[\varepsilon,\varepsilon^{-1}]$ for
      every $\varepsilon\in(0,1)$ and $\alpha'_0=\infty$.
  \end{itemize}
  Then, there is a $p$-jump process $\cX$ with drift $\alpha$ on $I$ starting in $X_0$,
  such that, letting $\hat\alpha=\sup_{x\in I\setminus\{0\}}\tfrac1x\alpha(x)$, the following
  statements hold:
  \begin{enumerate}
    \item
      If $\hat\alpha<\log\tfrac1p$, then $\mathbb P(X_t\xrightarrow{t\to\infty}0)=1$.
      Additionally, for the $k$th moment of $X_t$, $k\geq1$, the following estimates hold:
      \begin{enumerate}
        \item[$(U_1)$]
          In general (i.e. even for all $\hat\alpha\in\R$),
          \begin{align*}
            \limsup_{t\to\infty} \tfrac 1t \log\E[X_t^k]
              \leq -(1 - p^k - \hat\alpha k).
          \end{align*}
        \item[$(U_2)$]
          If $\hat\alpha > p^k\log\tfrac1p$, letting $\lambda:=\frac1{\hat\alpha}\log\frac1p$,
          we obtain the stronger bound
          \begin{align*}
            \limsup_{t\to\infty} \tfrac 1t \log\E[X_t^k]
              \leq -\big(1 - \tfrac1\lambda(1+\log\lambda)\big).
          \end{align*}
        \item[$(L_1)$]
          If there are $\delta\in\R$ and $\vartheta>0$ such that $\alpha(x)\geq\delta x-\vartheta x^2$
          for all $x\in I$, such that $\delta\leq p^k\log\tfrac1p$, then
          \begin{align*}
            \liminf_{t\to\infty} \tfrac 1t \log\E[X_t^k]
              \geq -(1 - p^k - \delta k).
          \end{align*}
        \item[$(L_2)$]
          If $\delta$ in $(L_1)$ can be chosen positive, letting $\gamma:=\frac1\delta\log\frac1p$,
          we obtain
          \begin{align*}
            \liminf_{t\to\infty} \tfrac 1t \log\E[X_t^k]
              \geq -\big(1 - \tfrac1\gamma(1+\log\gamma)\big),
          \end{align*}          
          which is a stronger bound than $(L_1)$, if $\delta>p^k\log\tfrac1p$.
      \end{enumerate}
    \item
      If $\alpha'_0\in(\log\tfrac1p,\infty]$, let $x_\alpha=\min\{x\in I\setminus\{0\}:\alpha(x)=0\}$.
      Then, $\cX$ converges weakly and its limit $X_\infty$ satisfies $\Pw(X_\infty\in(0,x_\alpha])=1$,
      $\E[X_\infty^{-1}\alpha(X_\infty)]=\log\tfrac1p$
      and for every $k\geq1$
      \[
        \E[X_\infty^k]
         = \frac k{1-p^k}\E[X_\infty^{k-1}\alpha(X_\infty)].
      \]
      Also, the distribution of $X_\infty$ is the unique stationary distribution and
      for every bounded and measurable function $f:I\to\R$ almost surely
      \begin{align*}
        \lim_{t\to\infty}\frac1t\int_0^tf(X_s)ds
         &= \E[f(X_\infty)].
      \end{align*}
  \end{enumerate}
\end{theorem}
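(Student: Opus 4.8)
The plan is to read off everything from the action of the generator $\cG_\cX$ on power functions, combined with the pathwise picture of $\cX$ as a deterministic flow interrupted by the jumps of a rate-one Poisson process $N$. First I would dispatch well-posedness: under $(C_1)$ or $(C_2)$ the flow $\dot X=\alpha(X)$ has a unique solution on $I\cap[\varepsilon,\infty)$ for each $\varepsilon>0$, so concatenating the deterministic pieces across the jump times of $N$ yields a unique PDMP; the sign conditions $\alpha(0)\ge0$, $\alpha(\upsilon)\le0$ keep $\cX$ in $I$, and $s_\alpha<\infty$ in the unbounded case bounds the flow so that all moments $m_k(t):=\E[X_t^k]$ stay finite. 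The basic identity is
\begin{align*}
  \cG_\cX(x^k) = (p^k-1)x^k + k\,x^{k-1}\alpha(x),
\end{align*}
so Dynkin's formula gives $\tfrac{d}{dt}m_k(t) = (p^k-1)m_k(t) + k\,\E[X_t^{k-1}\alpha(X_t)]$, and all the moment estimates will come from sandwiching $\alpha$.

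For the upper bounds in case (1) I insert $\alpha(x)\le\hat\alpha x$ to obtain $\tfrac{d}{dt}m_k\le(p^k-1+k\hat\alpha)m_k$, and Gr\"onwall gives $(U_1)$. Applying the same estimate to a general exponent $s$ yields $\limsup_t\tfrac1t\log\E[X_t^s]\le -g(s)$ with $g(s):=1-p^s-\hat\alpha s$. Since $g(0)=0$ and $g$ is concave with maximiser $s^\star=\log\lambda/\log\tfrac1p$, the hypothesis $\hat\alpha>p^k\log\tfrac1p$ is exactly the statement $s^\star<k$; because $\cX$ is eventually $\le1$ we have $X_t^k\le X_t^{s^\star}$ there, and the tail $\{X_t>1\}$ is controlled by Markov's inequality at the same exponential rate, so $m_k$ inherits the faster rate $g(s^\star)=1-\tfrac1\lambda(1+\log\lambda)$, which is $(U_2)$. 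Almost sure convergence to $0$ is cleanest pathwise: between jumps $\tfrac{d}{dt}\log X_t=\alpha(X_t)/X_t\le\hat\alpha$, hence $\log X_t\le\log X_0+\hat\alpha t+N_t\log p$, and $N_t/t\to1$ together with $\hat\alpha-\log\tfrac1p<0$ forces $X_t\to0$.

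For the lower bounds I run the same scheme from below. Using $\alpha(x)\ge\delta x-\vartheta x^2$ in the moment ODE gives $\tfrac{d}{dt}m_k\ge(p^k-1+k\delta)m_k-k\vartheta\,m_{k+1}$; when $\delta\le p^k\log\tfrac1p$ the higher moment $m_{k+1}$ decays strictly faster than $m_k$, so the correction is asymptotically negligible and one reads off the rate $1-p^k-k\delta$ of $(L_1)$. Equivalently, pathwise $\log X_t\ge\log X_0+\delta t+N_t\log p-\vartheta\int_0^tX_s\,ds$, the integral being a.s.\ finite since $X_t\to0$ exponentially, and taking expectations against the Poisson law $\E[p^{kN_t}]=e^{-(1-p^k)t}$ reproduces the same rate (the a.s.\ finite, $N_t$-correlated factor requiring a truncation/uniform-integrability argument, which is the one genuinely technical point here). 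In the regime $\delta>p^k\log\tfrac1p$, equivalently $\tilde s^\star:=\log\gamma/\log\tfrac1p<k$, the analogue $g_\delta(s):=1-p^s-\delta s$ is maximised in the interior at $\tilde s^\star$, and optimising the trade-off between few-jump histories and drift growth — now shaped by the logistic saturation built into $\delta x-\vartheta x^2$ — sharpens the rate to $g_\delta(\tilde s^\star)=1-\tfrac1\gamma(1+\log\gamma)$, giving $(L_2)$; matching this lower rate to the $(U_2)$ upper rate is the delicate part of case (1).

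Case (2) is where the main obstacle lies. Here $\alpha'_0>\log\tfrac1p$, so the flow pushes strongly away from $0$ while the jumps pull toward it, and one expects positive recurrence on $(0,x_\alpha]$. I would first verify that $(0,x_\alpha]$ is invariant (the flow cannot cross the fixed point $x_\alpha$ from below, multiplicative jumps keep the state positive) and that $\cX$ enters it from above since $\alpha<0$ beyond $x_\alpha$. The crucial non-degeneracy $\Pw(X_\infty>0)=1$ I would obtain from the Lyapunov function $f=\log$: since $\cG_\cX(\log)(x)=\log p+\alpha(x)/x\to\alpha'_0-\log\tfrac1p>0$ as $x\to0$, the process is repelled from $0$, which together with the upper bound $x_\alpha$ gives tightness of the laws of $X_t$ on a compact subset of $(0,x_\alpha]$ and hence a weakly convergent subsequence. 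Uniqueness of the stationary law and the ergodic time-average statement I would get from PDMP ergodic theory, by establishing irreducibility and a Doeblin/Harris condition for this one-dimensional flow-plus-jump process. The moment identities then follow by evaluating $\E[\cG_\cX f(X_\infty)]=0$ in stationarity: $f=\log$ gives $\E[X_\infty^{-1}\alpha(X_\infty)]=\log\tfrac1p$, and $f(x)=x^k$ gives $\E[X_\infty^k]=\tfrac{k}{1-p^k}\E[X_\infty^{k-1}\alpha(X_\infty)]$. The hard part throughout is making the stationarity identity for the unbounded test function $\log$ rigorous — i.e.\ proving genuine separation from $0$ so that $X_\infty^{-1}\alpha(X_\infty)$ is integrable — and the irreducibility needed for uniqueness and ergodicity; the algebraic moment computations are then immediate.
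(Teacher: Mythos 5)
Your architecture matches the paper's for most of the statement: well-posedness via Picard--Lindel\"of on the pieces, $(U_1)$ from the generator identity plus Gr\"onwall, almost sure convergence to $0$ from the pathwise bound $\log X_t\le \log X_0+\hat\alpha t+N_t\log p$ and the law of large numbers, and case \emph{2} via a Lyapunov/recurrence argument plus the stationarity identities $\E[\cG_\cX f(X_\infty)]=0$ for $f=\log$ and $f(x)=x^k$. Your route to $(U_2)$ is genuinely different from the paper's and is rather elegant: the paper bounds $\E[X_t^k]$ by an explicit functional of the Poisson process and evaluates it with large-deviation asymptotics, whereas you optimise the Gr\"onwall exponent $s\mapsto 1-p^s-\hat\alpha s$ over real $s$ and use $X_t^k\le X_t^{s^\star}$ on $[0,1]$; one checks that the maximiser $s^\star=\log\lambda/\log\tfrac1p$ lies in $(0,k)$ exactly under the hypothesis $\hat\alpha>p^k\log\tfrac1p$ and that the optimal value is $1-\tfrac1\lambda(1+\log\lambda)$, so this works (modulo a small approximation argument when $s^\star<1$, where $x\mapsto x^{s^\star}$ is not $\cC^1$ at $0$; under $(C_1)$ the term $x^{s^\star-1}\alpha(x)$ stays bounded, so this is harmless).

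The genuine gap is in $(L_1)$ and $(L_2)$; neither of the two mechanisms you propose closes. The moment-ODE route is circular: from $\dot m_k\ge(p^k-1+k\delta)m_k-k\vartheta m_{k+1}$ you can only discard the correction if you already know that $m_{k+1}/m_k\to 0$ at a definite exponential rate, which is precisely the kind of lower bound on $m_k$ you are trying to prove; integrating the inequality gives $e^{-at}m_k(t)\ge e^{-aT}m_k(T)-C e^{-\eta T}$, which is vacuous unless you can certify the right-hand side positive for some $T$. The pathwise route fails for a structural reason: in $\E\big[p^{kN_t}e^{-k\vartheta\int_0^tX_s\,ds}\big]$ the two factors are adversely correlated (few jumps makes $p^{kN_t}$ large but also keeps $X$ large, so the integral term is large), and a.s.\ finiteness of $\int_0^\infty X_s\,ds$ does not control its conditional size on the atypical few-jump events that dominate $\E[p^{kN_t}]$. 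Concretely, restricting to $\{N_s\le s/\gamma\ \forall s\le t\}$ still contains the no-jump paths, on which the logistic flow equilibrates near $\delta/\vartheta$ and $e^{-k\vartheta\int_0^t X_s\,ds}$ costs a full extra exponential factor; an FKG inequality goes the wrong way for a lower bound. The paper's resolution is the missing idea: the comparison process with drift $\delta x-\vartheta x^2$ has the explicit solution $W_t=p^{P_t}e^{\delta t}\big(X_0^{-1}+\vartheta\int_0^t p^{P_s}e^{\delta s}ds\big)^{-1}$, and after time-reversing the Poisson path one obtains the equidistributed $\overline W_t=\big(X_0^{-1}p^{-P_t}e^{-\delta t}+\vartheta\int_0^tp^{-P_s}e^{-\delta s}ds\big)^{-1}$, which is a \emph{monotone} functional of the path; on $\{P_s\le s/\gamma\ \forall s\le t\}$ its denominator is then deterministically sub-exponential, so the entire exponential cost is $\Pw(P_s\le s/\gamma\ \forall s\le t)$, whose rate $1-\tfrac1\gamma(1+\log\gamma)$ is supplied by the pathwise large-deviation estimate. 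Without this (or an equivalent decoupling device) $(L_1)$ and $(L_2)$ are not proved. Secondarily, in case \emph{2} your appeal to ``Doeblin/Harris'' defers the real work: the paper proves positive recurrence quantitatively, via a comparison process $\mathcal R$ with constant drift and optional stopping to get $\E[T_z^{\mathcal R}]\le 1/\varepsilon$, and then invokes regenerative-process theory (regeneration at the single level $z$, reachable because the downward motion of $-\log X_t$ is continuous); your tightness-plus-subsequence argument alone does not yield weak convergence of $X_t$, uniqueness of the stationary law, or the almost sure time-average limit.
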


\begin{remark}\label{rem:thm-PDMPs}\ 
  \begin{enumerate}
  \item Note that in Theorem
    \ref{thm:convergence-$p$-jump-pros}.\emph1 $\alpha'_0$ is finite,
    such that $\alpha(0)$ has to be $0$ and it holds
    $\alpha'_0=\alpha'(0)$. Hence the similar notation. Case \emph{2.}
    accounts for both possibilities $\alpha(0)=0$ with
    $\alpha'(0)=\infty$ as well as $\alpha(0)>0$.
  \item Although the theorem shows existence of a $p$-jump process on
    $\R^+$, it is notable that such a process will only assume values
    in $[0,\max\{X_0,s_\alpha\}]$, since by definition it can never
    grow beyond $\sup(\{X_0\}\cup\{x:\alpha(x)>0\})$.
  \end{enumerate}
\end{remark}

\noindent
In the case where $\alpha$ is concave, the bounds $(U_i)$ and $(L_i)$ align and
the continuity conditions become evident such that we can give a much more concise
result. Here, we will also include the cases $p=0$ and $p=1$.

\begin{corollary}\label{cor:p-jump-concave}
  Let $p\in[0,1]$, $I$ and $\alpha$ be as in Definition \ref{def:p-jump-process}.
  Additionally assume that $\alpha$ is concave, $\alpha''(0)\in[-\infty,0]$ exists
  and that either
  \begin{itemize}
  \item[--] $I=[0,\upsilon]$ and $\alpha'(\upsilon)>-\infty$ or
    \item[--] $I=\R^+$ and there is an $x>0$ such that $\alpha(x)=0$.
  \end{itemize}
  Then, letting $X_0\in I$ and $\alpha'_0:=\lim_{x\to0}\frac1x\alpha(x)$, there is a
  $p$-jump process $\cX$ with drift $\alpha$ on $I$ starting in $X_0$ and satisfying:
  \begin{enumerate}
    \item
      If $\alpha'_0<\log\tfrac1p$ or $p=0$, then $X_t\xrightarrow{t\to\infty}0$ almost surely.
      Also, for $k\geq1$
      \begin{align*}
        \lim_{t\to\infty}-\tfrac1t\log\E[X_t^k]
         &= \begin{cases}
           1 + \max\{0, - k\alpha'_0\} & \text{if }p=0,\\
           1 - p^k - k\alpha'_0 & \text{if }\alpha'_0\leq p^k\log\tfrac1p,\\
           1 - \tfrac1\gamma(1+\log\gamma) & \text{otherwise},
         \end{cases}
      \end{align*}
      where $\gamma=\log(\tfrac1p)/\alpha'_0$.
    \item
      If $\alpha'_0=\log\tfrac1p$, then $\frac1t\log\E[X_t^k]\xrightarrow{t\to\infty}0$
      and $\limsup_tX_t\leq m_\alpha:=\sup\{x\in[0, x_\alpha]:\alpha(x) = x\alpha'_0\}$
      almost surely. In particular, if $\alpha$ is strictly concave on an interval $(0,\varepsilon)$,
      then $X_t\to m_\alpha=0$ almost surely.
    \item
      If $\alpha'_0>\log\tfrac1p$, then $x_\alpha:=\sup\{x:\alpha(x)>0\}\in(0,\infty)$,
      $\cX$ converges weakly and its limit $X_\infty$ satisfies $\mathbb P(X_\infty\in(0,x_\alpha])=1$,
      $\E[X_\infty^{-1}\alpha(X_\infty)]=\log\tfrac1p$ and for every $k\geq1$
      \begin{align*}
        \E[X_\infty^k]
         &= \frac k{1-p^k}\E[X_\infty^{k-1}\alpha(X_\infty)].
      \end{align*}
      Also, the distribution of $X_\infty$ is the unique stationary
      distribution and for every bounded and measurable function
      $f:\R_+\to\R$ almost surely
      \begin{align*}
        \lim_{t\to\infty}\frac1t\int_0^tf(X_s)ds
         &= \E[f(X_\infty)].
      \end{align*}
  \end{enumerate}
\end{corollary}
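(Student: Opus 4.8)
The plan is to obtain statements 1 and 3 by specialising Theorem~\ref{thm:convergence-$p$-jump-pros} to concave $\alpha$, and to treat the critical regime $\alpha'_0=\log\tfrac1p$ and the degenerate cases $p\in\{0,1\}$ separately. The structural input I would exploit throughout is that concavity with $\alpha(0)=0$ makes $x\mapsto\alpha(x)/x$ non-increasing, so that $\hat\alpha=\sup_{x>0}\tfrac1x\alpha(x)=\lim_{x\to0}\tfrac1x\alpha(x)=\alpha'_0$ (and $\alpha'_0=\infty$ when $\alpha(0)>0$). This single identity collapses the gap between the hypotheses and conclusions of the theorem. I would first verify the regularity needed to invoke it: since $\alpha'$ is non-increasing, the assumption $\alpha'(\upsilon)>-\infty$ (resp.\ the existence of a zero on $\R^+$) bounds $\alpha'$ from below, so $\alpha$ is Lipschitz and $(C_1)$ holds when $\alpha'_0<\infty$, whereas $\alpha'_0=\infty$ yields $(C_2)$; on $\R^+$ the assumed zero also gives $s_\alpha=x_\alpha<\infty$.

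For statement 1 with $p\in(0,1)$, the a.s.\ convergence $X_t\to0$ and the upper bounds $(U_1)$, $(U_2)$ follow from Theorem~\ref{thm:convergence-$p$-jump-pros}.1 since $\hat\alpha=\alpha'_0<\log\tfrac1p$. The matching lower bounds come from $(L_1)$, $(L_2)$ with the choice $\delta=\alpha'_0$: concavity together with the existence of $\alpha''(0)$ gives $\big(\alpha'_0x-\alpha(x)\big)/x^2$ bounded on $I$, hence a quadratic bound $\alpha(x)\geq\alpha'_0x-\vartheta x^2$; when $\alpha''(0)=-\infty$ one instead takes $\delta_n\uparrow\alpha'_0$ and passes to the limit. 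With $\delta=\hat\alpha=\alpha'_0$ the constants in the theorem satisfy $\lambda=\gamma=\log(\tfrac1p)/\alpha'_0$, so $(U_1)$ and $(L_1)$ both give $1-p^k-k\alpha'_0$ in the regime $\alpha'_0\leq p^k\log\tfrac1p$, while $(U_2)$ and $(L_2)$ both give $1-\tfrac1\gamma(1+\log\gamma)$ otherwise; a short check shows the two expressions agree at $\alpha'_0=p^k\log\tfrac1p$. The cases $p=0$ (jump to the absorbing state $0$, so $\E[X_t^k]=\phi_t(X_0)^k e^{-t}$ with $\phi$ the flow, whence the rate is read off from the linearisation of $\phi_t$ near its limit) and $p=1$ (the deterministic ODE) I would dispose of by direct computation.

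Statement 3 is almost immediate: $\alpha'_0>\log\tfrac1p$ places us in $\alpha'_0\in(\log\tfrac1p,\infty]$, so Theorem~\ref{thm:convergence-$p$-jump-pros}.2 applies verbatim and delivers the weak limit $X_\infty$, the identity $\E[X_\infty^k]=\tfrac{k}{1-p^k}\E[X_\infty^{k-1}\alpha(X_\infty)]$, uniqueness of the stationary law, and the ergodic average. The only thing to add is the geometric identification $x_\alpha=\sup\{x:\alpha(x)>0\}=\min\{x\in I\setminus\{0\}:\alpha(x)=0\}$, which holds because a concave $\alpha$ with $\alpha(0)\geq0$ and $\alpha'_0>0$ is positive on $(0,x_\alpha)$ and stays negative thereafter, and $x_\alpha\in(0,\infty)$ by the boundary assumption on $I$.

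The genuinely new content is statement 2, not covered by the theorem. The moment limit is a squeeze: since $\alpha'_0=\log\tfrac1p>p^k\log\tfrac1p$, bound $(U_2)$ applies with $\lambda=1$ and gives $\limsup_t\tfrac1t\log\E[X_t^k]\leq0$, while $(L_2)$ with $\delta_n\uparrow\alpha'_0$ (so $\gamma_n\to1$ and $1-\tfrac1{\gamma_n}(1+\log\gamma_n)\to0$) gives $\liminf_t\tfrac1t\log\E[X_t^k]\geq0$. For the almost sure bound I would apply the generator \eqref{eq:generator-of-p-jump-process} to $f=\log$, obtaining $\cG_\cX\log(x)=\log p+\alpha(x)/x=\alpha(x)/x-\alpha'_0\leq0$ by concavity, with equality exactly on $[0,m_\alpha]$ and strict negativity for $x>m_\alpha$; hence $\log X_t$ is a supermartingale, bounded above by $\log s_X$, whose compensating martingale has $\langle M\rangle_t=(\log p)^2t$ and therefore satisfies $M_t/t\to0$ a.s. I expect the main obstacle here to be upgrading the strict negative drift on $\{x>m_\alpha\}$ to the a.s.\ statement $\limsup_tX_t\leq m_\alpha$: because the drift $\alpha'_0-\alpha(x)/x$ vanishes as $x\to0$, $\log X_t$ need not converge, and one must rule out excursions above $m_\alpha+\varepsilon$ occurring infinitely often. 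I would attack this by a recurrence/excursion argument — showing that the time spent above $m_\alpha+\varepsilon$ must be negligible, since any positive asymptotic fraction would force $\tfrac1t\int_0^t(\alpha(X_s)/X_s-\alpha'_0)\,ds$ to a strictly negative limit and hence $X_t\to0$, while brief repeated excursions are controlled through the strict drift — possibly via a Foster–Lyapunov function built from $\log x$ and the quadratic correction $\alpha(x)\leq\alpha'_0x-\vartheta x^2$. The strictly concave sub-case is then a corollary, since there $m_\alpha=0$ and $\limsup_tX_t\leq0$ forces $X_t\to0$.
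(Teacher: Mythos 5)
Your handling of statements \emph{1} and \emph{3} and of the moment limit in \emph{2} is correct and close in spirit to the paper's proof: the identity $\hat\alpha=\alpha'_0$ forced by concavity (and $\alpha(0)\geq 0$) is exactly the hinge the paper uses, and your way around $\alpha''(0)=-\infty$ --- taking $\delta_n\uparrow\alpha'_0$, noting that each strict sub-slope admits a parabolic minorant because $\alpha(x)/x>\delta_n$ near $0$, and passing to the limit in $(L_1)$/$(L_2)$ --- is a legitimate and slightly more direct variant of the paper's device, which instead couples $\cX$ with processes driven by $\alpha_n(x)=\min\{\alpha(x),x\alpha(\tfrac1n)\}$; the secant slopes $n\alpha(\tfrac1n)$ are precisely such a sequence $\delta_n$. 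Likewise your squeeze for $\tfrac1t\log\E[X_t^k]\to0$ in the critical case, via $(U_2)$ with $\lambda=1$ and $(L_2)$ with $\gamma_n\to1$, is valid; the paper obtains the same limit by a monotone coupling of the family $(\cX^{(p)})_p$ and the case-\emph{1} rates for $p<p^\ast:=e^{-\alpha'_0}$.

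The genuine gap is the almost sure bound $\limsup_tX_t\leq m_\alpha$ in statement \emph{2}. You correctly diagnose that the drift computation $\cG_\cX\log(x)=\alpha(x)/x-\alpha'_0\leq0$ together with $\langle M\rangle_t=(\log p)^2t$ only yields $\limsup_t\tfrac1t\log X_t\leq0$, but you then leave the decisive step as a programme (``recurrence/excursion argument \dots\ possibly via a Foster--Lyapunov function'') rather than a proof. The difficulty is real: $\log X_t$ is unbounded below, so supermartingale convergence is unavailable; the drift $\alpha(x)/x-\alpha'_0$ vanishes on all of $[0,m_\alpha]$; and the occupation-time argument you sketch only excludes excursions above $m_\alpha+\varepsilon$ of positive asymptotic frequency, leaving sparse excursions (the process repeatedly collapsing towards $0$ and later climbing back above $m_\alpha+\varepsilon$) unaddressed. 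The paper closes this by a different idea: couple the whole family $(\cX^{(p)})_{p\in(0,1)}$ to one Poisson clock so that $p\mapsto X_t^{(p)}$ is monotone; for every $p>p^\ast$ the process falls under Theorem \ref{thm:convergence-$p$-jump-pros}.\emph{2}, whose stationarity and ergodic statements control $\liminf_t\alpha(X_t^{(p)})/X_t^{(p)}$ from below by $\log\tfrac1p$; sandwiching between this (suprema over $p>p^\ast$) and the pointwise upper bound $\alpha(x)/x\leq\alpha'_0$ gives $\lim_t\alpha(X_t^{(p^\ast)})/X_t^{(p^\ast)}=\alpha'_0$ almost surely, and since $x\mapsto\alpha(x)/x$ is non-increasing this is equivalent to $\limsup_tX_t^{(p^\ast)}\leq m_\alpha$. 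To complete your proof you would need either to carry out the excursion/Lyapunov analysis in full or to import a comparison of this kind.
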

%

\subsection{Branching Processes with Binomial Disasters}

\noindent
Applying Corollary \ref{cor:p-jump-concave} to a $p$-jump process, via duality with respect
to probability generating functions in Theorem \ref{thm:bp-arb.off.dis} we obtain
immediate limit results for the following class of branching processes with binomial
disasters.

\begin{definition}\label{def:hom-bp-abr-off-dis}
  Let $\lambda>0$, $q=(q_k)_{k\geq0}$ a distribution on $\N_0$, $\kappa>0$ and $p\in[0,1]$.
  A Markov process $\cZ$ on $\N_0$ with generator
  \begin{align*}
    \cG_\cZ f(z)
     &= \lambda z\sum_{k\geq0}q_k\big(f(z-1+k)-f(z)\big)
       + \kappa\sum_{k=0}^z\genfrac(){0pt}{}zkp^k(1-p)^{z-k}\big(f(k)-f(z)\big)
  \end{align*}
  for $f \in \mathcal B(\N_0)$, the set of real-valued, bounded
  functions on $\N$, is called a \emph{homogeneous branching process
    with death-rate $\lambda$ and offspring distribution $(q_k)$,
    subject to binomial disasters at rate $\kappa$ with survival
    probability $p$} and will be denoted by
  $\cZ^h_{\lambda,q,\kappa,p}$.
\end{definition}

\noindent
Such a process describes the size of a population that behaves in the following way:
Every individual dies with rate $\lambda$ and leaves behind a random number of
offsprings distributed according to $(q_k)$. Independent of this growth mechanism,
with rate $\kappa$ global events occur that kill off every individual alive at that
time with probability $1-p$ independently of each other.

\begin{theorem}\label{thm:bp-arb.off.dis}
  Let $\lambda>0$, $q=(q_k)_{k\geq0}$ a distribution on $\N_0$ with
  expectation $\mu:=\sum_kkq_k$, $\kappa>0$ and $p\in[0,1)$. Then,
  if $p=0$, $\cZ$ goes extinct almost surely with
  \[
    \lim_{t\to\infty}-\tfrac1t\log\mathbb P(Z_t>0)
      = \kappa + \max\{\lambda(1-\mu),0\}.
  \]
  Otherwise, letting $\nu=\lambda(\mu-1)/(\kappa\log\frac1p)$,
  $\cZ:=\cZ^h_{\lambda,q,\kappa,p}$ satisfies
  \begin{enumerate}
    \item
      if $\nu\leq p$, $\cZ$ goes extinct almost surely and
      \begin{align*}
        \lim_{t\to\infty}-\tfrac1t\log\mathbb P(Z_t>0)
         &= (1-p)\kappa - \lambda(\mu-1).
      \end{align*}
    \item
      if $p<\nu\leq1$, $\mathcal Z$ goes extinct almost surely and
      \begin{align*}
        \lim_{t\to\infty}-\tfrac1t\log\mathbb P(Z_t>0)
         &= 
            \kappa(1 - \nu - \nu\log(\tfrac1\nu)).
      \end{align*}
    \item
      if $\nu>1$, then $\cZ$ survives with positive probability, where
      \begin{align}
        0
          < \mathbb P(\lim_{t\to\infty}Z_t=\infty)
          = 1 - \mathbb P(\lim_{t\to\infty}Z_t=0)
          = \sum_{k=1}^z\genfrac(){0pt}{}zk(-1)^{k-1}\E[X^k]
          < 1-x_\ast^{z_0},
          \label{eq:bp-arb.off.dis-surv.prob}
      \end{align}
      where $h(x)=\sum_kx^kq_k$ is the pgf of $q$, $x_\ast$ is the
      smallest fixed point of $h$ and $X$ is a random variable on $(0,1-x_\ast]$
      satisfying $\E[X^{-1}(1-h(1-X))]=1+\tfrac\kappa\lambda\log\tfrac1p$
      and
      \begin{align}
        \E[X^k]
           = \frac{\lambda k}{\lambda k + \kappa(1-p^k)}\E[X^{k-1}(1-h(1-X))].
           \label{eq:bp-arb.off.dis-surv.prob.rec}
      \end{align}
  \end{enumerate}
\end{theorem}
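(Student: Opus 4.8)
The plan is to leverage the duality between the branching process $\cZ$ and a suitable $p$-jump process, then read off all three regimes directly from Corollary~\ref{cor:p-jump-concave}. The excerpt already tells us (in the introduction) that for a branching process without disasters, $\E[x^{Z_t}\mid Z_0=z]=X_t^z$ where $1-X$ solves $\dot X=-\lambda(X-h(X))$. First I would set up the dual $p$-jump process by working with $Y:=1-X$, so that the deterministic flow becomes $\dot Y = \alpha(Y)$ with $\alpha(y)=\lambda\bigl(y - (1 - h(1-y))\bigr)$... let me verify the sign: if $1-\cX$ evolves by \eqref{eq:ODE}, then writing $Y=1-X$ gives $\dot Y = -\dot X = \lambda(X - h(X)) = \lambda\bigl((1-Y) - h(1-Y)\bigr)$, and since the disaster multiplies $\cX$ by $p$ (i.e. sends $X\mapsto pX$), in the $Y$-coordinate the jump is $y\mapsto 1-p(1-y)$, which is \emph{not} a clean $p$-jump. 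So instead I would keep $\cX$ itself as the object on $[0,1]$ with multiplicative jumps $x\mapsto px$, and identify its drift. The natural choice is $\alpha(x)=\lambda\bigl(1-h(1-x)-x\bigr)$ so that $\cX$ on $I=[0,1-x_\ast]$ is a $p$-jump process (after rescaling the jump rate $\kappa$ to unit rate by a time change $t\mapsto \kappa t$); the key duality identity to establish is
\begin{align*}
  \mathbb E[(1-X_t)^{z}\mid X_0 = x] = \mathbb E[(1-x)^{Z_t}\mid Z_0=z],
\end{align*}
equivalently $\mathbb P(Z_t>0\mid Z_0=z)=1-\E[(1-X_t)^z]=\sum_{k=1}^z\binom{z}{k}(-1)^{k-1}\E[X_t^k]$ by inclusion–exclusion, where $X_t$ starts at $x=1$.

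Second I would verify that $\alpha(x)=\lambda(1-h(1-x)-x)$ meets the hypotheses of Corollary~\ref{cor:p-jump-concave}. Concavity of $\alpha$ follows from convexity of the pgf $h$; one computes $\alpha'(x)=\lambda(h'(1-x)-1)$ and $\alpha''(x)=-\lambda h''(1-x)\le 0$, and $\alpha''(0)=-\lambda h''(1)\in[-\infty,0]$ as required. At the origin, $\alpha(0)=\lambda(1-h(1))=0$ and the crucial slope is $\alpha'_0=\alpha'(0)=\lambda(h'(1)-1)=\lambda(\mu-1)$. After the time change scaling the unit jump rate up to $\kappa$ — which rescales $\alpha$ to $\alpha/\kappa$, hence the threshold quantity to compare against $\log\tfrac1p$ becomes $\alpha'_0/\kappa=\lambda(\mu-1)/\kappa$ — the trichotomy $\alpha'_0 \lessgtr \log\frac1p$ of the Corollary translates exactly into $\nu\lessgtr 1$ with $\nu=\lambda(\mu-1)/(\kappa\log\frac1p)$. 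Then case~3 of the Corollary gives weak convergence of $\cX$ to a limit $X_\infty=X$ supported on $(0,x_\alpha]=(0,1-x_\ast]$ with the stated moment recursion and the constraint $\E[X^{-1}\alpha(X)]=\log\frac1p$, which after inserting $\alpha$ and the $\kappa$-scaling yields \eqref{eq:bp-arb.off.dis-surv.prob.rec} and the identity $\E[X^{-1}(1-h(1-X))]=1+\tfrac\kappa\lambda\log\tfrac1p$; the survival probability is then the $z=z_0$ inclusion–exclusion sum, giving the middle equality in \eqref{eq:bp-arb.off.dis-surv.prob}.

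Third, for the subcritical regimes (cases 1 and 2, and $p=0$) I would extract the exponential decay rate of $\mathbb P(Z_t>0)$ from the $k=1$ moment asymptotics in Corollary~\ref{cor:p-jump-concave}.1. Since $\mathbb P(Z_t>0\mid Z_0=z_0)=\sum_{k=1}^{z_0}\binom{z_0}{k}(-1)^{k-1}\E[X_t^k]$ and each $\E[X_t^k]$ decays exponentially, the sum is dominated by the term with the slowest decay; I would argue that this is the $k=1$ term, so that the rate equals $\lim_t -\tfrac1t\log\E[X_t]$. Reading off Corollary~\ref{cor:p-jump-concave}.1 with $k=1$ and drift $\alpha/\kappa$ gives the three stated formulas: when $\alpha'_0/\kappa\le p\log\frac1p$ (i.e. $\nu\le p$) the rate is $\kappa(1-p)-\alpha'_0=(1-p)\kappa-\lambda(\mu-1)$; when $p<\nu\le 1$ it is $\kappa(1-\tfrac1\gamma(1+\log\gamma))$ with $\gamma=\log(\tfrac1p)/(\alpha'_0/\kappa)=1/\nu$, giving $\kappa(1-\nu-\nu\log\tfrac1\nu)$; and the $p=0$ case comes from the first branch of the Corollary's display.

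The main obstacle I anticipate is twofold. The first genuine technical point is rigorously establishing the pgf duality for the process \emph{with} disasters — one must check that the disaster generator term in Definition~\ref{def:hom-bp-abr-off-dis} is dual to the multiplicative jump $x\mapsto px$ of $\cX$ under the pairing $(x,z)\mapsto(1-x)^z$, i.e. that $\sum_{k=0}^z\binom{z}{k}p^k(1-p)^{z-k}(1-x)^k = (1-px)^z$, which is just the binomial theorem but must be matched generator-to-generator and combined with the known branching duality via a standard duality-verification argument. The second, subtler obstacle is justifying that in the subcritical sums the $k=1$ term controls the decay rate and that the $\limsup$/$\liminf$ bounds from the Corollary combine into a genuine limit for $\mathbb P(Z_t>0)$; since the higher moments decay at least as fast (the rate in Corollary~\ref{cor:p-jump-concave}.1 is nondecreasing in $k$ in the relevant regimes), the alternating-sign inclusion–exclusion does not cause cancellation at the leading exponential order, but this monotonicity-in-$k$ of the decay rate, together with the nonnegativity ensuring no spurious cancellation, is the step that requires care.
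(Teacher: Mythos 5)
Your overall route is the paper's route: the pgf duality $\E[(1-X_t)^z\mid X_0=x]=\E[(1-x)^{Z_t}\mid Z_0=z]$ with the $p$-jump process of drift $\alpha(x)=\lambda(1-x-h(1-x))$, verified generator-to-generator via the binomial theorem, followed by an application of Corollary~\ref{cor:p-jump-concave} with $\alpha'_0=\lambda(\mu-1)$ and a time change to reduce to $\kappa=1$. That part is sound. However, the step you yourself flag as delicate — extracting the decay rate of $\Pw(Z_t>0)$ from the alternating sum $\sum_k\binom{z}{k}(-1)^{k-1}\E[X_t^k]$ — is where your proposed argument actually fails. In regime \emph{2} (i.e.\ $p<\nu\leq1$) the rate $1-\tfrac1\gamma(1+\log\gamma)$ from Corollary~\ref{cor:p-jump-concave}.\emph1 does not depend on $k$, so \emph{all} moments $\E[X_t^k]$ decay at exactly the same exponential rate and "the $k=1$ term dominates" is not available; an alternating sum of terms with a common rate could a priori decay strictly faster. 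The paper sidesteps this entirely with Bernoulli's inequality: since $X_t\in[0,1]$, one has $\E[X_t]\leq 1-\E[(1-X_t)^z]=\Pw(Z_t>0)\leq z\,\E[X_t]$, so the rate of $\Pw(Z_t>0)$ is \emph{identically} that of the first moment, with no cancellation analysis needed. You should replace your monotonicity-in-$k$ argument by this sandwich.

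Two further pieces of the statement are not addressed in your sketch. First, in case \emph{3} the identity $\Pw(\lim_tZ_t=\infty)=1-\Pw(\lim_tZ_t=0)$ requires proving that $\cZ$ almost surely either dies out or explodes; this does not follow from the duality or from Corollary~\ref{cor:p-jump-concave}. The paper obtains it by bounding $\Pw(Z_t\to0\mid Z_s)\geq\tfrac1{1+\lambda}(1-p)^{Z_s}$ (the next event is an all-killing disaster) and invoking Lemma~3.1 of Kaplan et al.; some such extra ingredient is needed. Second, almost sure extinction in cases \emph{1} and \emph{2} includes the boundary $\nu=1$, where the exponential rate is $0$ and so positivity of the rate cannot be used; there one needs $X_t\to0$ almost surely (Corollary~\ref{cor:p-jump-concave}.\emph2), hence $\E[X_t]\to0$ by bounded convergence, hence $Z_t\to0$ in probability and then almost surely because $0$ is absorbing. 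With these three repairs your argument coincides with the paper's proof.
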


\begin{remark}\ 
  \begin{enumerate}
    \item
      Rearranging the inequalities in terms of $\mu$, we obtain
      \[
        \textit{1. if }\mu\leq 1 + \tfrac{\kappa p}\lambda\log\tfrac1p,\quad
        \textit{2. if }1 + \tfrac{\kappa p}\lambda\log\tfrac1p<\mu\leq1 + \tfrac\kappa\lambda\log\tfrac1p,\quad
        \textit{3. if }\mu>1 + \tfrac\kappa\lambda\log\tfrac1p.
      \]
      These give insight into how supercritical the underlying
      branching process has to be in order to
      survive the disasters.\\
      Also, this formulation illustrates the continuity of the theorem
      in $p=1$: Classical results for such processes without disasters
      (cf. \citealp[Theorem 11.1, p.109]{Harris1963}) show that $\cZ$
      goes extinct almost surely if $\mu\leq1$ with
      $-\frac1t\log\Pw(Z_t>0)\to\lambda(1-\mu)$ as $t\to\infty$, which
      aligns with \emph1., while for $\mu>1$,
      $\Pw(Z_t\to\infty)=1-\Pw(Z_t\to0)=1-x_\ast^{z_0}$, which is the
      upper bound for the survival probability in
      \eqref{eq:bp-arb.off.dis-surv.prob} for $p<1$.
    \item
      While \cite{KaplanEtAl1975} have already shown the almost sure extinction in \emph{1.}
      and \emph{2.} as well as the fact in \emph{3.} that $\cZ$ almost surely either goes extinct
      or explodes, we offer an alternative proof via our duality results plus rates of convergence
      for the survival probability including the case $\mu=\infty$.
      Also, making use of \eqref{eq:bp-arb.off.dis-surv.prob.rec}, our result offers a way to
      compute the exact extinction probability in \emph{3}. Since the recursion in
      \eqref{eq:bp-arb.off.dis-surv.prob.rec} depends on the offspring pgf $h$, in general this
      formula can be difficult to compute. Corollary \ref{cor:results-homog-bd-proc}, however,
      shows that in the example of homogeneous birth-death-processes it is feasible.
  \end{enumerate}
\end{remark}

\noindent
The following corollary applies Theorem \ref{thm:bp-arb.off.dis} to birth-death-processes
with disasters. This does not only provide a nice transition to the next theorem, but offers
an example where (using the relation \eqref{eq:bp-arb.off.dis-surv.prob.rec}) we can
explicitly compute the survival probability.

\begin{corollary}\label{cor:results-homog-bd-proc}
  Let $\cZ:=(Z_t)_t$ be a homogeneous birth-death-process with respective rates $b>0$ and $d\geq0$
  that underlies binomial disasters at a rate of $\kappa>0$ with survival probability $p\in(0,1)$.
  \begin{enumerate}
    \item
      If $b-d\leq \kappa p\log\tfrac1p$, $\cZ$ goes extinct almost surely and
      \begin{align*}
        \lim_{t\to\infty}-\tfrac1t\log\Pw(Z_t>0)
         &= (1 - p)\kappa - (b-d).
      \end{align*}
    \item
      If $\kappa p\log\tfrac1p<b-d\leq \kappa\log\tfrac1p$, $\cZ$ goes extinct almost surely and
      \begin{align*}
        \lim_{t\to\infty}-\tfrac1t\log\Pw(Z_t>0)
         &= \kappa - \frac{b-d}{\log\frac1p}\Big(1 + \log\Big(\frac{\kappa\log\frac1p}{b-d}\Big)\Big).
      \end{align*}
    \item
      If $b-d>\kappa\log\tfrac1p$, then
      $
        \Pw_k(\lim_{t\to\infty}Z_t=0)
          + \Pw_k(\lim_{t\to\infty}Z_t=\infty)
          = 1
      $ and
      \begin{align*}
        \Pw_k(\lim_{t\to\infty}Z_t=\infty)
         &= \Big(1-\frac{d+\kappa\log\frac1p}b\Big)\sum_{\ell=1}^k\genfrac(){0pt}{}k\ell(-1)^{\ell-1}
              \prod_{m=1}^{\ell-1}\Big(1-\frac{dm+(1-p^m)\kappa}{bm}\Big).
      \end{align*}
  \end{enumerate}
\end{corollary}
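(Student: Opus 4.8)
The plan is to derive Corollary~\ref{cor:results-homog-bd-proc} as a direct specialization of Theorem~\ref{thm:bp-arb.off.dis} to the birth-death case, where the offspring pgf is explicit and the recursion \eqref{eq:bp-arb.off.dis-surv.prob.rec} can be solved in closed form. A homogeneous birth-death process with birth rate $b$ and death rate $d$ fits the framework of Definition~\ref{def:hom-bp-abr-off-dis} with death-rate parameter $\lambda=b+d$ and offspring distribution giving two offspring with probability $q_2=b/(b+d)$ and zero offspring with probability $q_0=d/(b+d)$, so that $h(x)=q_0+q_2x^2$. A quick computation then gives the mean offspring number $\mu=2q_2=2b/(b+d)$, so that $\lambda(\mu-1)=b-d$. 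Substituting this into the threshold quantity $\nu=\lambda(\mu-1)/(\kappa\log\frac1p)=(b-d)/(\kappa\log\frac1p)$, the three cases $\nu\le p$, $p<\nu\le 1$, $\nu>1$ translate verbatim into the stated conditions $b-d\le\kappa p\log\frac1p$, $\kappa p\log\frac1p<b-d\le\kappa\log\frac1p$, and $b-d>\kappa\log\frac1p$, and the decay rates in parts~1.\ and~2.\ follow by plugging $\lambda(\mu-1)=b-d$ and $\nu=(b-d)/(\kappa\log\frac1p)$ into the corresponding formulas of Theorem~\ref{thm:bp-arb.off.dis}.

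The substantive content is in part~3.\ The first task is to verify that $x_\ast$, the smallest fixed point of $h$, equals $d/b$ in the supercritical regime: solving $q_0+q_2x^2=x$, i.e.\ $d/(b+d)+(b/(b+d))x^2=x$, factors as $(x-1)(bx-d)/(b+d)=0$, giving roots $x=d/b$ and $x=1$, so indeed $x_\ast=d/b$ when $d<b$. Then I would compute the moments $\E[X^k]$ recursively via \eqref{eq:bp-arb.off.dis-surv.prob.rec}. The key simplification is that $1-h(1-X)$ becomes a low-degree polynomial in $X$: since $h(x)=q_0+q_2x^2$, one has $1-h(1-X)=(\mu-1)X\cdot(\text{linear correction})$; concretely $1-h(1-X)=X\bigl(1-q_2+q_2X\bigr)\cdot(\text{const})$ after expanding $(1-X)^2$, which makes $\E[X^{k-1}(1-h(1-X))]$ a linear combination of $\E[X^k]$ and $\E[X^{k+1}]$. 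This turns \eqref{eq:bp-arb.off.dis-surv.prob.rec} into a two-term recursion linking consecutive moments, and I expect it to telescope into the stated product $\prod_{m=1}^{\ell-1}\bigl(1-\frac{dm+(1-p^m)\kappa}{bm}\bigr)$.

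The main obstacle, and the step requiring the most care, is the explicit solution of the moment recursion in part~3. Once $1-h(1-X)$ is expanded for the quadratic $h$, equation~\eqref{eq:bp-arb.off.dis-surv.prob.rec} should read (up to constants) as a relation expressing $\E[X^k]$ through $\E[X^{k-1}]$ with a multiplicative factor of the form $1-\frac{d(k-1)+(1-p^{k-1})\kappa}{b(k-1)}$, and I would prove by induction that $\E[X^\ell]=\E[X^1]\prod_{m=1}^{\ell-1}\bigl(1-\frac{dm+(1-p^m)\kappa}{bm}\bigr)$, with the normalization constant $\E[X^1]=1-(d+\kappa\log\frac1p)/b$ identified from the side condition $\E[X^{-1}(1-h(1-X))]=1+\frac\kappa\lambda\log\frac1p$. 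Substituting these moments into the alternating binomial sum \eqref{eq:bp-arb.off.dis-surv.prob} for the survival probability $\Pw_k(\lim Z_t=\infty)=\sum_{\ell=1}^k\binom{k}{\ell}(-1)^{\ell-1}\E[X^\ell]$ then yields the claimed formula directly. Finally, the dichotomy $\Pw_k(\lim Z_t=0)+\Pw_k(\lim Z_t=\infty)=1$ is inherited verbatim from Theorem~\ref{thm:bp-arb.off.dis}.3, which already asserts almost-sure extinction-or-explosion, so no additional argument is needed there.
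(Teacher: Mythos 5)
Your proposal is correct and follows essentially the same route as the paper, which likewise identifies $\lambda=b+d$, $q_0=d/(b+d)$, $q_2=b/(b+d)$, reads off parts \emph{1.}\ and \emph{2.}\ by insertion into Theorem~\ref{thm:bp-arb.off.dis}, and obtains part \emph{3.}\ from the factorization of $1-h(1-x)$ turning \eqref{eq:bp-arb.off.dis-surv.prob.rec} into a telescoping two-term moment recursion. One small algebra slip: the correct factorization is $1-h(1-X)=q_2X(2-X)$, not $X(1-q_2+q_2X)\cdot(\text{const})$, though the recursion factor $1-\tfrac{dm+(1-p^m)\kappa}{bm}$ and the first moment $\E[X]=1-(d+\kappa\log\tfrac1p)/b$ that you derive downstream are exactly right.
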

\begin{proof}
  First note that $\cZ$ is a $\cZ^h_{\lambda,q,\kappa,p}$-process with
  $\lambda=b+d$, $q_0=d/(b+d)$ and $q_2=b/(b+d)=1-q_0$. Thus,
  $\lambda(\mu-1)=(b+d)(q_2-q_0)=b-d$ and
  $\nu=(b-d)/(\kappa\log\frac1p)$, already providing \emph{1.} and
  \emph2. by insertion in Theorem \ref{thm:bp-arb.off.dis}.  For
  \emph{3.} we derive $1-h(1-x)=1-q_0-q_2(1-x)^2=q_2x(2-x)$, yielding
  a simple recursion in \eqref{eq:bp-arb.off.dis-surv.prob.rec}
  concluding the proof.
\end{proof}

\noindent
In Section \ref{sec:pr-thm3} we will develop tools for the analysis of inhomogeneous
birth-death processes with time-dependent disasters, generalising the setting of Corollary
\ref{cor:results-homog-bd-proc}. For this, mind the following definition, where the
birth, death and disaster rates $b,d,\kappa$ as well as the survival probability $p$
are now given as functions of $t$.

\begin{definition}\label{def:inhom-bd-w-dis}
  Let $b,d,\kappa: \R_+\to \R_+$ and $p:\R_+\to[0,1]$, where we
  abbreviate $b_s := b(s), d_s := d(s), \kappa_s :=\kappa(s)$ and
  $p_s := p(s)$. A Markov process $\cZ$ on $\N_0$ with time-dependent
  generator (see Section 4.7A of \citealp{EK86})
  \begin{align*}
    \cG_{\cZ,t} f(z)
    &= b_tz\big(f(z+1)-f(z)\big) + d_tz\big(f(z-1)-f(z)\big)\\[1em]
    &\qquad +\kappa_t\sum_{k=0}^z\genfrac(){0pt}{}zkp_t^k(1-p_t)^{z-k}\big(f(k)-f(z)\big)
  \end{align*}
  for $t\geq 0$ and $f\in \mathcal B(\N_0)$, is called an
  \emph{inhomogeneous birth-death-process with birth-rate $b$ and
    death-rate $d$, subject to binomial disasters with survival
    probability $p$ occurring at rate $\kappa$} and will be denoted by
  $\cZ^{in}_{b,d,\kappa,p}$.
\end{definition}

\noindent
Key to our approach is Lemma \ref{lem:inhom-duality}, which
computes the conditioned pgf delivering some kind of \emph{stronger} duality, enabling us
with Proposition \ref{prop:inhom-limit} to easily give pgf limit results in terms of that
dual process. While these tools offer room for further generalisation (cf. Remark
\ref{rem:prop:inhom-limit}), we give the following theorem as an example of application,
where we also make use of Theorem \ref{thm:asymptotics-of-D-integral}.

\begin{theorem}\label{thm:inhom-bdp}
  Let $b,d,\kappa$ be non-negative right-continuous functions on
  $\R_+$ with left limits and $p: \R_+\to[0,1]$ left-continuous with
  right limits such that $p_t=0$ only if $\kappa_t=0$ and, letting
  $\Lambda_\kappa^{-1}(t):=\inf\{s>0:\int_0^s\kappa_sds>t\}$, such
  that the map $-\log(p(\Lambda_\kappa^{-1}(\cdot)))$ is regularly
  varying. Furthermore, let $h:\R_+ \to \R_+$ continuous and
  non-decreasing with $\lim_{t\to\infty}t^{-\alpha}h(t)=\infty$ for
  some $\alpha>0$, as well as $\iota\in\{-1,1\}$ such that
  \begin{align*}
    \frac1{h(t)}\int_0^t\Big(b_s - d_s - \kappa_s\log\Big(\frac1{p_s}\Big)\Big)ds
     \xrightarrow{t\to\infty}\iota.
  \end{align*}
  Then, $\cZ:=\cZ^{in}_{b,d,\kappa,p}$ satisfies
  \begin{enumerate}
  \item if $\iota=1$ and for some $\varepsilon>0$ holds
    $\displaystyle\int_0^\infty e^{-(1-\varepsilon)h(s)}b_sds<\infty$,
    then $\Pw(Z_t\xrightarrow{t\to\infty}0)<1$.
  \item if $\iota=-1$ or for some $\varepsilon>0$ holds
    $\displaystyle\int_0^\infty e^{-(1+\varepsilon)h(s)}b_sds=\infty$,
    then $\Pw(Z_t\xrightarrow{t\to\infty}0)=1$.
  \end{enumerate}
\end{theorem}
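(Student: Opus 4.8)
The plan is to reduce the dichotomy, via the duality of Lemma~\ref{lem:inhom-duality}, to the asymptotics of an explicit Kendall-type functional of the rates, and then to control that functional with the regular-variation estimates of Theorem~\ref{thm:asymptotics-of-D-integral}. First I would note that since $\{Z_t=0\}$ is absorbing, $q(t):=\Pw_1(Z_t=0)$ increases to $\Pw_1(\lim_tZ_t=0)$, so the survival probability $s(t):=1-q(t)$ decreases to a limit $s_\infty$, and by independence of distinct lineages $\Pw_z(\lim_tZ_t=0)=q(t)^{z}$ along the way; hence it suffices to decide whether $s_\infty>0$ or $s_\infty=0$ for a single ancestor. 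Fixing a realisation of the disaster times $(T_i)$ (an inhomogeneous Poisson process of rate $\kappa$), the conditioned pgf of the binary branching part is, following \cite{Kendall1948} as packaged in Lemma~\ref{lem:inhom-duality}, the solution of the Riccati equation $\dot w_s=b_sw_s^2-(b_s-d_s)w_s$ run backwards from $w_t=1$ with a multiplicative jump by $1/p_{T_i}$ at each $T_i$. Linearising via $r=1/w$ turns this into the inhomogeneous linear equation $\dot r_s=(b_s-d_s)r_s-b_s$ with jumps $r\mapsto p_{T_i}r$, which integrates to
\begin{align*}
  \frac1{s(t)}=\Psi_t+\int_0^t\Psi_s\,b_s\,ds,\qquad
  \Psi_s=\exp\Big(-\!\int_0^s(b_u-d_u)\,du+\sum_{T_i\le s}\log\tfrac1{p_{T_i}}\Big).
\end{align*}
Thus almost sure extinction is equivalent to $1/s(t)\to\infty$, i.e.\ to $\Psi_t\to\infty$ or $\int_0^\infty\Psi_sb_s\,ds=\infty$, and survival with positive probability to $\Psi_t$ bounded together with $\int_0^\infty\Psi_sb_s\,ds<\infty$. (The homogeneous specialisation of this criterion reproduces Corollary~\ref{cor:results-homog-bd-proc}, which serves as a sign check.)

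Next I would replace the random disaster sum in $\log\Psi_s$ by its compensator. After the time change $\Lambda_\kappa$ the disaster epochs become a unit-rate Poisson process carrying the marks $-\log p(\Lambda_\kappa^{-1}(\cdot))$, so a strong law for the associated compound sum should give $\sum_{T_i\le s}\log\frac1{p_{T_i}}=\int_0^s\kappa_u\log\frac1{p_u}\,du+o(h(s))$ almost surely. This is exactly where the assumed regular variation of $-\log(p(\Lambda_\kappa^{-1}(\cdot)))$ and Theorem~\ref{thm:asymptotics-of-D-integral} enter, with the hypothesis $t^{-\alpha}h(t)\to\infty$ guaranteeing that the fluctuation is negligible on the scale of $h$. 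Consequently $\log\Psi_s=-D_s+o(h(s))$ almost surely, where $D_s=\int_0^s(b_u-d_u-\kappa_u\log\frac1{p_u})\,du$ satisfies $D_s/h(s)\to\iota$ by hypothesis. The translation from the conditioned statements back to $\Pw(Z_t\to0)$ is then supplied by Proposition~\ref{prop:inhom-limit}.

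The case analysis afterwards is bookkeeping with the $\varepsilon$-slack. If $\iota=-1$, then $\log\Psi_t\sim h(t)\to\infty$, so already $\Psi_t\to\infty$ and $1/s(t)\to\infty$, giving a.s.\ extinction. If $\iota=1$, then for large $s$ one has $(1-\varepsilon)h(s)\le D_s\le(1+\varepsilon)h(s)$, whence $e^{-(1+\varepsilon)h(s)}\lesssim\Psi_s\lesssim e^{-(1-\varepsilon)h(s)}$; the convergence assumption $\int_0^\infty e^{-(1-\varepsilon)h(s)}b_s\,ds<\infty$ then forces $\int_0^\infty\Psi_sb_s\,ds<\infty$ while $\Psi_t\to0$ (already from $\iota=1$), so $s_\infty>0$ and $\Pw(Z_t\to0)<1$; conversely $\int_0^\infty e^{-(1+\varepsilon)h(s)}b_s\,ds=\infty$ forces $\int_0^\infty\Psi_sb_s\,ds=\infty$ and hence a.s.\ extinction. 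Since these conclusions hold for almost every realisation of $(T_i)$, averaging yields the corresponding unconditional statements for $\Pw(Z_t\to0)$.

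I expect the main obstacle to be the second step: establishing the almost sure asymptotic $\sum_{T_i\le s}\log\frac1{p_{T_i}}=\int_0^s\kappa_u\log\frac1{p_u}\,du+o(h(s))$ with an error uniform enough to survive being reweighted by the possibly growing factor $b_s$ inside $\int_0^t\Psi_sb_s\,ds$. Controlling this compound-sum fluctuation on the scale of $h$ — rather than merely on the scale of its own mean — is precisely what the regular-variation machinery of Theorem~\ref{thm:asymptotics-of-D-integral} is designed to supply, and matching its hypotheses (the one-sided continuity of $b,d,\kappa,p$, the condition that $p_t=0$ only if $\kappa_t=0$, and the regular variation of $-\log p(\Lambda_\kappa^{-1}(\cdot))$) to the integral estimates above is where the real work lies.
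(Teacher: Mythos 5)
Your proposal is correct and follows essentially the same route as the paper: the explicit conditional survival probability $1/s(t)=e^{-L_t}+\int_0^te^{-L_s}b_s\,ds$ is exactly Lemma~\ref{lem:inhom-duality} (your Riccati linearisation is just another derivation of it), the replacement of the disaster sum by its compensator on the scale of $h$ is exactly the application of Theorem~\ref{thm:asymptotics-of-D-integral} giving $L_t\sim\iota h(t)$, and the $\varepsilon$-bookkeeping plus averaging over the disaster realisations is what the paper packages as Proposition~\ref{prop:inhom-limit}.
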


\begin{remark}\ 
  \begin{enumerate}
    \item
      For $h(t)=t$ and $b,d,\kappa,p$ constant, this result aligns with the homogeneous
      case (cf. Corollary \ref{cor:results-homog-bd-proc}).
    \item
      The regular variation condition on $p$ and $\kappa$ is equivalent to the
      existence of $\beta\in\R$ and some slowly varying function $\ell$, such
      that
      $
        p(t)
          = \exp(-\Lambda_\kappa(t)^\beta\ell(\Lambda_\kappa(t)).
      $
      We need $t^{-\alpha}h(t)\to\infty$ for some $\alpha>0$ to handle the case
      $\beta=-1$ in which Theorem \ref{thm:asymptotics-of-D-integral} is
      inconclusive. If $\beta\neq-1$, one may choose $\alpha=0$.
    \item
      The condition that $\kappa=0$ whenever $p=0$ ensures that no \emph{terminal}
      disasters occur, i.e. disasters that render $\cZ$ extinct with probability 1.
      Dropping this, $\int_0^t\kappa_s\log(1/p_s)ds$ might no longer be finite.
      However, letting $\kappa^-_t:=\kappa_t\cdot\delta_{0,p_t}$ and
      $\kappa^+_t:=\kappa_t-\kappa^-_t$, it is possible to apply Theorem
      \ref{thm:inhom-bdp} to the process $\cZ=\cZ^{in}_{b,d,\kappa^+,p}$ without the
      terminal disasters and separately compute the probability $\pi_{term}$ that at
      least one terminal disaster occurs, which for a unit-rate Poisson process
      $(P_t)$ satisfies
      \begin{align*}
        \pi_{term}
         &= \Pw(P_{\int_0^\infty\kappa^-_tdt}>0)
          = 1 - \exp\Big(-\int_{t:\,p(t)=0}\kappa_t\,dt\Big).
      \end{align*}
      Since the sets $\{\kappa^+>0\}$ and $\{\kappa^->0\}$ are disjoint, the
      respective counts of disasters on these sets are independent. This implies
      that the terminal disasters only affect the positivity of the survival
      probability, if $\int_{t:\,p(t)=0}\kappa_t\,dt=\infty$.
    \item The cases where a normalisation function $h$ as in Theorem
      \ref{thm:inhom-bdp} does not exist, are discussed in
      Remark~\ref{rem:inhom-conv-rates}.1. Rates of convergence for
      the survival probability in case \emph2. will briefly be
      discussed in Remark \ref{rem:inhom-conv-rates}.2.
  \end{enumerate}
\end{remark}

\subsection{Continuous State Branching Processes with Binomial Disasters}
The application of $p$-jump processes is not limited to branching
processes with discrete states. We will now discuss survival and
extinction for continuous state branching processes (see e.g.\
\citealp{Lambert2008} for an overview) with binomial disasters.  A
similar model is studied in \cite{Bansaye2013}, where multiplicative
jumps occur for any factor according to some intensity measure. Their
Theorem~1 shows existence and uniqueness of the process we now define.

\begin{definition}\label{def:csbp-w-dis}
  Let $N$ a measure on $\mathbb R_+$ with $\int_0^\infty\min(y,y^2)N(dy)<\infty$,
  $b\in\R, c, \kappa \in \R_+$ and $p\in(0,1)$. Then, the $\R_+$-valued Markov
  process with generator
  $$
    \mathcal G_{\mathcal Z}f(z)
      = bzf'(z) + c z f''(z) + \kappa (f(pz) - f(z)) + \int_0^\infty (f(z+y) - f(z) - yf'(z))z N(dy),
  $$
  for $f \in \mathcal C_b^2(\mathbb R_+)$ (the space of bounded, twice
  continuously differentiable functions) is called the \emph{continuous
  state branching process with $p$-disasters} and will be denoted by
  $\cZ^{cs}_{b,c,N,\kappa,p}$.
\end{definition}

\noindent
Viewing a continuous state branching process as a scaling limit of a
discrete branching process, by the law of large numbers the equivalent
of a binomial disaster would be a $p$-jump of the population size,
represented by the term $\kappa(f(pz)-f(z))$ in $\cG_\cZ$.  The next
Theorem is similar to (but less precise than) Corollary~6 of
\cite{Bansaye2013}, but does not need their restrictions
$\int_0^\infty y^2N(dy)<\infty$ and $c>0$.

\begin{theorem}\label{thm:cont-state}
  Let $\cZ = \cZ^{cs}_{b,c,N,\kappa,p}$ be a continuous state branching
  process with $p$-disasters as in Definition \ref{def:csbp-w-dis}
  and let
  $$\alpha: \R_+ \to\R,x\mapsto bx-cx^2 - \int_0^\infty(e^{-xy}-1+xy)N(dy)$$
  satisfy $\limsup_{x\to\infty}x^{-(1+\varepsilon)}\alpha(x)<0$ for
  some $\varepsilon>0$.
  \begin{enumerate}
    \item
      If $b\leq\kappa\log\frac1p$, $Z_t\xrightarrow{t\to\infty}0$ almost surely and
      \begin{align*}
        \lim_{t\to\infty}-\tfrac1t\log\Pw(Z_t>0)
         &= \begin{cases}
              (1-p)\kappa - b & \text{ if }b\leq \kappa p\log\frac1p,\\
              \kappa - \frac\kappa\gamma(1+\log\gamma) & \text{ otherwise,}
            \end{cases}
      \end{align*}
      where $\gamma:= \kappa\log(\frac1p)/b$.
    \item
      If $b>\kappa\log\frac1p$, $\lim_{t\to\infty}\Pw(Z_t=0|Z_0=z)\in(e^{-z\xi},1)$,
      where $\xi$ denotes the largest root of $\alpha$.
  \end{enumerate}
\end{theorem}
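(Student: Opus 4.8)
The plan is to exploit the Laplace-transform duality that drives the theory of continuous-state branching processes, now with the disaster term producing a multiplicative jump. Writing $H(x,z)=e^{-xz}$ and letting $\cG_\cZ$ act in the variable $z$, a direct computation collapses the drift, diffusive and $N$-integral parts of $\cG_\cZ$ into $-z e^{-xz}\alpha(x)$, while the disaster term contributes $\kappa(e^{-pxz}-e^{-xz})$, giving
\begin{align*}
  \cG_\cZ H(x,\cdot)(z)
   &= -z e^{-xz}\alpha(x) + \kappa\big(e^{-pxz}-e^{-xz}\big).
\end{align*}
If $\cX$ is a $p$-jump process with drift $\alpha$ whose jumps occur at rate $\kappa$ (the unit-rate process of Definition \ref{def:p-jump-process} with drift $\alpha/\kappa$, run at speed $\kappa$), then $\cG_\cX H(\cdot,z)(x)$ equals the very same expression. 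First I would therefore check the integrability hypotheses of a standard Markov-process duality theorem and conclude that $\E_z[e^{-xZ_t}]=\E_x[e^{-zX_t}]$ for all $x,z\ge 0$. Letting $x\to\infty$ and using that both the flow and the jumps are monotone in the initial condition, this identifies the extinction probability as $\Pw_z(Z_t=0)=\E[e^{-zX_t^\infty}]$, where $X_t^\infty:=\lim_{x\to\infty}X_t(x)$.

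Next I would bring $\cX$ into the scope of Corollary \ref{cor:p-jump-concave}. The drift is concave, since $\alpha''(x)=-2c-\int_0^\infty y^2e^{-xy}N(dy)\le 0$, with $\alpha(0)=0$, $\alpha'_0=\alpha'(0)=b$ and $\alpha''(0)\in[-\infty,0]$; when $b>0$ the hypothesis $\limsup_{x\to\infty}x^{-(1+\varepsilon)}\alpha(x)<0$ forces a unique positive root $x_\alpha=\xi$, while for $b\le 0$ concavity gives $\alpha\le 0$ on $\R_+$, so one works instead on a bounded interval $[0,\upsilon]$. The same growth hypothesis is the key analytic input: it yields $\int^\infty dx/|\alpha(x)|<\infty$, i.e.\ the drift ODE comes down from $+\infty$ in finite time, so that $X_t^\infty$ is finite for every $t>0$ and enters a fixed compact set; a domination argument then shows that $X_t^\infty$ inherits both the almost-sure limit and the moment asymptotics that Corollary \ref{cor:p-jump-concave} establishes for finite starting points. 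After the time rescaling by $\kappa$, the threshold $\alpha'_0\lessgtr\log\tfrac1p$ of the Corollary becomes $b\lessgtr\kappa\log\tfrac1p$ and each decay rate acquires a factor $\kappa$.

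It then remains to translate. In case \emph{1.} ($b\le\kappa\log\tfrac1p$) the Corollary gives $X_t^\infty\to 0$; since $0$ is absorbing for $\cZ$, $\Pw_z(Z_t=0)=\E[e^{-zX_t^\infty}]\to 1$, which is almost-sure extinction. For the rate I would write $\Pw_z(Z_t>0)=\E[1-e^{-zX_t^\infty}]$ and sandwich it: the bound $1-e^{-u}\le u$ gives $\Pw_z(Z_t>0)\le z\,\E[X_t^\infty]$, whose decay rate is $\kappa$ times the $k=1$ rate of the first part of Corollary \ref{cor:p-jump-concave} --- precisely $(1-p)\kappa-b$ when $b\le\kappa p\log\tfrac1p$ and $\kappa-\tfrac\kappa\gamma(1+\log\gamma)$ otherwise, with $\gamma=\kappa\log(\tfrac1p)/b$. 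For the matching lower bound I would use a finite Laplace parameter, $\Pw_z(Z_t>0)\ge\E_x[1-e^{-zX_t}]$, and extract a tail estimate $\Pw_x(X_t>\delta)\gtrsim e^{-(\kappa r_1+o(1))t}$ from the $(L_i)$ lower moment bounds of Theorem \ref{thm:convergence-$p$-jump-pros}. In case \emph{2.} ($b>\kappa\log\tfrac1p$) the Corollary gives $X_t^\infty\Rightarrow X_\infty$ with $\Pw(X_\infty\in(0,x_\alpha])=1$; hence $\lim_t\Pw_z(Z_t=0)=\E[e^{-zX_\infty}]$, which is $<1$ because $X_\infty>0$ almost surely and $>e^{-z\xi}$ because the multiplicative jumps keep $X_\infty$ strictly below $x_\alpha=\xi$ with positive probability.

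The main obstacle is the lower bound on $\Pw_z(Z_t>0)$ in the second regime of case \emph{1.}, namely $\kappa p\log\tfrac1p<b\le\kappa\log\tfrac1p$. There every moment $\E[X_t^k]$ decays at the \emph{same} exponential rate $\kappa(1-\tfrac1\gamma(1+\log\gamma))$, so the naive estimate $1-e^{-u}\ge u-\tfrac12u^2$ is useless: the quadratic correction is of the same order as the linear term, and the first moment is not concentrated. One must instead show that the mass of $X_t$ near a fixed level $\delta>0$ decays at exactly this rate, which is a large-deviation statement about the number of disasters and is where the detailed analysis behind Theorem \ref{thm:convergence-$p$-jump-pros} must be invoked. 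A secondary technical point, routine given the growth hypothesis, is making the coming-down-from-infinity rigorous and checking that the $x\to\infty$ limit commutes with the long-time asymptotics of the dual.
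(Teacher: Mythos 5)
Your proposal follows the paper's proof essentially step for step: the same Laplace duality $\E_z[e^{-xZ_t}]=\E_x[e^{-zX_t}]$ obtained from the generator computation, the same reduction to a $p$-jump process with drift $\alpha$ (via rescaling to $\kappa=1$), the same use of strict concavity of $\alpha$ and coming down from infinity under $\limsup_{x\to\infty}x^{-(1+\varepsilon)}\alpha(x)<0$, and the same sandwich $u-\tfrac{u^2}{2}\le 1-e^{-u}\le u$ combined with Corollary \ref{cor:p-jump-concave}; your route to almost-sure extinction in case \emph{1.} (monotonicity of $\{Z_t=0\}$) is a harmless variant of the paper's supermartingale argument.

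The one point where you diverge is the ``main obstacle'' you flag in the regime $\kappa p\log\tfrac1p<b\le\kappa\log\tfrac1p$, and here you are seeing a problem that is not there. It is true that $\E[X_t^k]$ decays at the same exponential rate for all $k\ge 1$ in that regime, but the quadratic correction is still harmless because $X_t^\infty$ is uniformly bounded for $t$ bounded away from $0$: the deterministic flow from $+\infty$ satisfies $\phi_{t_0}(\infty)=:C<\infty$ and the jumps only decrease the process, so $\E[(X_t^\infty)^2]\le C\,\E[X_t^\infty]$ \emph{pointwise in $t$}, not merely at the level of exponential rates. Hence
\begin{align*}
\Pw_z(Z_t>0)\;\ge\; z\,\E[X_t^\infty]-\tfrac{z^2}{2}\E[(X_t^\infty)^2]\;\ge\; z\bigl(1-\tfrac{zC}{2}\bigr)\E[X_t^\infty],
\end{align*}
which for $z<2/C$ is a positive constant times $\E[X_t^\infty]$ and therefore has exactly the rate of the first moment; for larger $z$ use that $z\mapsto 1-e^{-zx}$ is nondecreasing. (Equivalently, $1-e^{-u}\ge ue^{-u}\ge ue^{-zC}$ does the job in one line.) No strong large-deviation estimate on the mass of $X_t$ near a level $\delta$ is needed, and the elementary sandwich suffices exactly as the paper asserts.
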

\noindent
We will only give an outline of the proof, since its structure is
illustrative for the proofs to come and it differs only in details.

\begin{proof}[Sketch of proof.]
  Due to a rescaling argument, we just need to show the case of $\kappa=1$.
  Setting $H(x,z) := e^{-xz}$ and applying the generator to the function
  $z\mapsto H(x,z)$ for $x$ fixed gives
  \begin{align*}
    \mathcal G_{\mathcal Z} H(x,.)(z)
     &= \Big(
          bx - cx^2 - \int_0^\infty (e^{-xy} - 1 + xy)N(dy)\Big)\frac{\partial H}{\partial x}(x,z)
           + H(xp,z) - H(x,z)
  \end{align*}
  In other words (cf. \eqref{eq:dual-gens} at the beginning of Section \ref{sec:bps}),
  $\mathcal Z$ is dual to the $p$-jump process with drift $\alpha$. Now, note that
  $\xi=\bar x_\alpha<\infty$, since $\limsup_{x\to\infty}x^{-(1+\varepsilon)}\alpha(x)<0$
  and thus, $\alpha(x)\to-\infty$ as $x\to\infty$. Hence, either $c>0$ or $N((0,\infty))>0$.
  In any case, for all $x>0$
  \begin{align*}
    \alpha''(x)
     &= -2c - \int_0^\infty x^2e^{-xy}N(dy)
      < 0
  \end{align*}
  and $\alpha$ is strictly concave. Hence, Corollary \ref{cor:p-jump-concave} applies
  with $\alpha'_0=\alpha'(0)=b$.\\
  Since $\limsup_{x\to\infty}x^{-(1+\varepsilon)}\alpha(x)<0$, $\cX$ comes down from
  infinity in the sense that $\cX$ is well-defined in the limit of $X_0\to\infty$. It
  follows
  \begin{align*}
    \Pw(Z_t=0|Z_0=z)
     &= \lim_{x\to\infty}\E[e^{-xZ_t}|Z_0=z]
      = \E[e^{-zX_t}|X_0=\infty].
  \end{align*}
  Using that $x-\frac{x^2}2\leq 1-e^{-x}\leq x$ for all $x\geq0$ to
  estimate the rates of convergence, Corollary \ref{cor:p-jump-concave}
  concludes the proof, where the almost sure convergence in \emph{1.}
  comes from the fact that $\cZ$ is a supermartingale. (This can be
  seen by computing the martingale given by
  $Z_t\exp(-\int_0^t\frac{\cG_\cZ\text{id}(Z_s)}{Z_s}ds)$.)
\end{proof}

\begin{remark} 
  In the case $b>\kappa\log\frac1p$, we have
  \begin{align*}
    \Pw(Z_t=0|Z_0=z)
    &\xrightarrow{t\to\infty} \E[e^{-zX_\infty}]
      =: L(z),
  \end{align*}
  the Laplace transform of the limit of $\cX$. Using stationarity it follows from
  $\E[\cG_\cX f(X_\infty)]=0$ for $f(x)=e^{-zx}$, that $L$ has to satisfy
  the functional equation
  \begin{align*}
    \kappa(L(pz) - L(z)) - bzL'(z) + czL''(z)
    + z\int_0^\infty L(z+y) - L(z) + y L'(z) N(dy)
    = 0,
  \end{align*}
  which for appropriate $N$ might deliver a more precise result than
  Theorem \ref{thm:cont-state}.\emph2.
\end{remark}

\section{Piecewise Deterministic Markov Processes}\label{sec:pdmps}

In this section, we start by proving Theorem \ref{thm:convergence-$p$-jump-pros},
mainly by applying large deviation results for Poisson processes, given in Appendix
\ref{sec:ldp}, and the work of \cite{BladtNielsen2017} regarding
regenerative processes. Using this, we prove Corollary \ref{cor:p-jump-concave}
in Section~\ref{sec:pr-cor-concave} for concave $\alpha$ offering a more concise result including continuities for
$p\in\{0,1\}$.

\subsection{Proof of Theorem \ref{thm:convergence-$p$-jump-pros}}

\begin{proof}
  
  At the very beginning, suppose that the theorem already holds for $p$-jump processes
  on $[0,1]$, let $\alpha$ be as in the assumptions, $s:=\max\{X_0,s_\alpha\}$
  and consider $a:[0,1]\to\R^+,x\mapsto\alpha(sx)/s$. It is straightforward
  to show that $a$ satisfies the conditions of Definition \ref{def:p-jump-process}
  as well as the ones of Theorem \ref{thm:convergence-$p$-jump-pros} such that there
  is a $p$-jump process $\overline\cX=(\overline X_t)$ with drift $a$ on
  $[0,1]$ starting in $sX_0$ for which the assertions of the theorem hold,
  where $a'_0=\alpha'_0$, $\hat{a}=\hat\alpha$ and $x_a=sx_\alpha$. Also,
  $a(x)\geq\delta x-\vartheta x^2$ for all $x\in[0,1]$ iff
  $\alpha(x)\geq\delta x - \frac\vartheta sx^2$ for all $x\in[0,s]$.
  Considering that $t\mapsto X_t:=s_\alpha\overline X_t$ also performs
  $p$-multiplicative jumps at rate 1 and in betwteen satisfies
  $\dot X_t=s\cdot a(\overline X_t)=\alpha(X_t)$, we obtain that the theorem
  holds. Hence, without loss of generality, we assume for the rest of the proof
  that $I=[0,1]$.\\[.5em]
  Note that in $(C_1)$, $\alpha$ is Lipschitz-continuous on the whole interval $[0,1]$,
  while in $(C_2)$, there is an $x^+>0$ such that $\alpha(x)>0$ for all $x\in(0,x^+]$. In
  the latter case, the initial value problem with $f'=\alpha(f)$ and $f(0)>0$ is
  equivalent to the restriction $f'=\alpha\big|_{[\min\{f(0),x^+\},1]}(f)$, since
  $\alpha(\min\{f(0),x^+\})>0$, where $\alpha$ is Lipschitz-continuous on $[\min\{f(0),x^+\},1]$.
  In either case, by the Picard-Lindel\"of Theorem, for every deterministic piece on an interval
  between two jumps $[\tau_k,\tau_{k+1})$ the initial value problem with $\dot X_t = \alpha(X_t)$
  and $X_{\tau_k}:=pX_{\tau_k-}>0$ has a unique solution. Additionally, the bound $\alpha(1)\leq0$
  ensures that $\cX$ does not leave the interval $[0,1]$. Thus, $\cX$ is well-defined.
  (Also note that, since by definition $\alpha'_0\leq\hat\alpha$, $(C_2)$ only concerns
  case \emph2. Moreover, $\alpha'_0<\infty$ only if $\alpha(0)=0$, in which case
  $\alpha'_0=\alpha'(0)$.)\\[1em]
  \emph{1.} Let $\beta(y):=e^y\alpha(e^{-y})$ and $Y_t:=-\log X_t\in \R_+$. Then, the process
  $\mathcal Y:=(Y_t)_{t\geq0}$ has the generator
  \begin{align*}
    \mathcal G_{\mathcal Y}g(y)
     &= \big(\mathcal G_{\mathcal X}(g\circ(-\log))\big)(e^{-y})\\[1em]
     &= g\big(-\log(pe^{-y})\big) - g(y)
       + \alpha(e^{-y})\cdot\big(-\tfrac1xg'(-\log x)\big)\big|_{x=e^{-y}}\\[1em]
     &= g(y+\log\tfrac1p) - g(y) - \beta(y)g'(y).
  \end{align*}
  The assertion implies that $\beta(y)=\frac1{e^{-y}}\alpha(e^{-y})\leq\hat\alpha<\log\tfrac1p$
  for all $y\in \R_+$. Letting $(P_t)$ be the unit-rate Poisson process jumping
  simultaneously with $\mathcal Y$, it follows that
  \begin{align}
    P_t\cdot\log\tfrac1p - t\hat\alpha
     \leq Y_t\label{eq:bound-Y}
  \end{align}
  for all $t$, since the jumps are identical, the left side always starts in 0 and has
  point-wise inferior drift. The law of large numbers, giving us $\lim_{t\to\infty}P_t/t=1$
  and thus almost surely $\liminf_t(Y_t/t)\geq\log\tfrac1p-\hat\alpha>0$, shows that
  $-\log X_t=Y_t\longrightarrow_{t\to\infty}\infty$ with probability 1.\\[0.5em]
  $(U_1)$ Using the generator of $\cX$ on $f(x)=x^k$, we obtain
  \begin{align}
    \tfrac d{dt}\E[X_t^k]
     &= \E\Big[\big(\cG_{\cX}(\cdot)^k\big)(X_t)\Big]
      = \E[p^kX_t^k - X_t^k + \alpha(X_t)kX_t^{k-1}]
      \leq \E[X_t^k](p^k-1+k\hat\alpha).
      \label{eq:U1}
  \end{align}
  Considering Gronwall's inequality, it is straightforward to deduce $(U_1)$
  (even for arbitrary $\hat\alpha$).\\[0.5em]
  $(U_2)$ For the (stronger) upper bound in the case of $\hat\alpha>p^k\log\tfrac1p$ we reuse
  $\eqref{eq:bound-Y}$ to compute
  \begin{align}
    \E[X_t^k]
      &= \E[e^{-kY_t}]\notag\\[1em]
      &\leq u(t)
       := \E[e^{-k(P_t\log(1/p)-t\hat\alpha)}\wedge 1]\label{eq:u(t)}\\[1em]\notag
     &= \mathbb P\Big(P_t \leq \frac{\hat\alpha}{\log(1/p)}t\Big)
         + \sum_{\ell\geq\hat\alpha t/(\log(1/p))}
             e^{k(\hat\alpha t-\log(1/p)\ell)} e^{-t}\frac{t^\ell}{\ell!}\\ \notag
     &= \underbrace{\mathbb P\Big(P_t \leq \frac t\lambda\Big)}_{=:u_1(t)}
       +\underbrace{e^{-t(1-p^k-\hat\alpha k)}\mathbb P\Big(P_{p^kt}\geq\frac1{p^k\lambda}p^kt\Big)}
                   _{=:u_2(t)}.
  \end{align}
  First, using \eqref{lld:ld2} of Lemma \ref{lem:ld-PP} with $x = \tfrac1\lambda < 1$, we compute
  \begin{align*}
    -\frac1t\log u_1(t)
    &\xrightarrow{t\to\infty}
       1 - \frac1\lambda + \frac1\lambda\log\frac1\lambda
     = 1 - \tfrac1\lambda(1+\log\lambda) =:A_{\lambda}.
  \end{align*}
  For the second term, $u_2$, we see that $p^k\lambda<1$ and thus, by \eqref{lld:ld1} of Lemma
  \ref{lem:ld-PP},
  \begin{align*}
    - \frac 1t \log u_2(t)
      \xrightarrow{t\to\infty}
       \ &1 - p^k - \hat\alpha k
           + p^k\Big(1 - \frac1{p^k\lambda} + \frac1{p^k\lambda}\log\frac1{p^k\lambda}\Big)\\[1em]
      =\ &1 - \hat\alpha k - \frac1\lambda\Big(1 - \log\frac1\lambda - k\log\frac1p\Big)
      = A_{\lambda}.
  \end{align*}
  Combining these two results, we obtain $\lim_{t\to\infty}\tfrac1t\log u(t)=-A_\lambda$,
  which equates to the desired bound.\\[0.5em]
  To recognise that the bound in $(U_2)$ is in fact stronger than the one of $(U_1)$, we need to
  verify that $1-p^k-\hat\alpha k\leq A_\lambda$, if $p^k\lambda<1$. In this case, the function
  $h:x\mapsto x+\tfrac1\lambda\log\tfrac1x$ is strictly decreasing on $[p^{k},\lambda^{-1}]$,
  since $h'(x) = 1 - \tfrac1{\lambda x}$. Thus, inserting $\hat\alpha=\tfrac1\lambda\log\tfrac1p$,
  the difference satisfies
  \begin{align*}
    A_\lambda - (1-p^k - \hat\alpha k)
     &= p^k + \frac1\lambda\log\frac1{p^k} - \frac1\lambda(1+\log\lambda)\\[1em]
     &= h(p^k)-h(\lambda^{-1})
      > 0.
  \end{align*}
  $(L_1)$ and $(L_2)$: Let $w:[0,1]\to\R,x\mapsto\delta x-\vartheta x^2$. Then, $\cX$ is bounded
  below by a process $\mathcal W=(W_t)_t$ with generator
  \begin{align*}
    \mathcal G_{\mathcal W}f(x)
     &= f(px)-f(x) + w(x)f'(x),
  \end{align*}
  if $\cX$ and $\mathcal W$ have equal initial values and are coupled in such a way that they
  jump simultaneously at the jump times of a Poisson process $(P_t)$. Now, we will show that the
  moments of $\mathcal W$ have the desired asymptotic properties, using that $\mathcal W$
  can be represented explicitly via
  \begin{align}
    W_t
     &= \frac{p^{P_t}e^{\delta t}}{X_0^{-1}+\vartheta\int_0^tp^{P_s}e^{\delta s}ds}.\label{eq:W-explicit}
  \end{align}
  Surely, $\mathcal W$ starts in $X_0$ and has the desired jumps at the times of $P$,
  since $W_t/p^{P_t}$ is continuous. To verify the desired deterministic growth, let
  $\eta(t):=X_0^{-1}+\vartheta\int_0^tp^{P_s}e^{\delta s}ds$, the denominator of $W_t$, and
  assume that $P$ does not jump in an interval $(t-\varepsilon,t+\varepsilon)$. Then,
  \begin{align*}
    \tfrac d{dt}W_t
     &= \frac{p^{P_t}\delta e^{\delta t}\eta(t)-p^{P_t}e^{\delta t}\eta'(t)}{\eta(t)^2}
      = W_t\cdot\frac{\delta\eta(t)-\vartheta p^{P_t}e^{\delta t}}{\eta(t)}
      = W_t(\delta-\vartheta W_t)
      = w(W_t).
  \end{align*}
  Next, we consider the process $\overline{\mathcal W}$ that arises from $\mathcal W$
  by exchanging for every $t$ the path $(P_s)_{0\leq s\leq t}$ with its time-reversal
  $(\overline P_s^t)_{0\leq s\leq t}$ via $\overline P_s^t := P_t-P_{t-s}$, given by
  \begin{align*}
    \overline W_t
     &= \frac{p^{\overline P^t_t}e^{\delta t}}{X_0^{-1}+\vartheta\int_0^tp^{\overline P_s^t}e^{\delta s}ds}
      = \frac1{X_0^{-1}p^{-P_t}e^{-\delta t}+\vartheta\int_0^tp^{-P_{t-s}}e^{-\delta(t-s)}ds}\\[1em]
     &= \Big(X_0^{-1}p^{-P_t}e^{-\delta t}+\vartheta\int_0^tp^{-P_s}e^{-\delta s}ds\Big)^{-1}.
  \end{align*}
  Now, since $(P_s)_{0\leq s\leq t}$ is equally distributed as
  $(\overline P_{s+})_{0\leq s\leq t}$ for every $t>0$, where
  $\overline P_{s+} := \lim_{r\downarrow s} \overline P_r$ denotes the
  right-side limit, we also have that $W_t\overset d=\overline W_t$
  for all $t$ and in particular $\E[W_t^k]=\E[\overline W_t^k]$. Also
  note that, in contrast to $W_t$, $\overline W_t$ decreases if the
  path of $(P_t)$ increases pointwise. Now, using this and
  \eqref{lld:ld3} of Lemma \ref{lem:ld-PP}, we obtain for $(L_2)$
  (even for arbitrary $\delta>0$), considering that
  $\delta\leq\hat\alpha$ and thus
  $\gamma\geq\frac1{\hat\alpha}\log\frac1p>1$ as well as
  $\delta=\tfrac1\gamma\log\tfrac1p$,
  \begin{align*}
    \frac{1}{t} \log \E[X_t^k]
     &\geq\frac1t\log\E[\overline W_t^k]\\
     &= \frac1t\log\E\Big[\Big(X_0^{-1}p^{-P_t}e^{-\delta t}
                   + \vartheta\int_0^tp^{-P_s}e^{-\delta s}ds\Big)^{-k}\Big]\\
     &\geq \frac1t \log \E\Big[\Big(X_0^{-1}p^{-P_t}e^{-\delta t}
                   + \vartheta\int_0^tp^{-P_s}e^{-\delta s}ds\Big)^{-k},
                     P_s\leq \tfrac s\gamma\text{ for all }s\leq t\Big]\\
     &\geq \frac 1t \log\Bigg(\Big(X_0^{-1}e^{(\frac1\gamma\log\frac1p-\delta)t}
                   + \vartheta\int_0^t e^{(\frac1\gamma\log\frac1p-\delta)s}ds\Big)^{-k}
                   \mathbb P(P_s \leq \tfrac s\gamma\text{ for all }s\leq t)\Bigg)\\
     &\xrightarrow{t\to\infty}
       -k\cdot0 - (1-\tfrac1\gamma(1+\log\gamma)) = A_\gamma.
  \end{align*}
  For the case $\delta\leq p^k\log\tfrac1p$, we deduce analogously
  \begin{align*}
    \frac{1}{t} \log \E[X_t^k]
     &\geq \frac1t\log\E\Big[\Big(X_0^{-1}p^{-P_t}e^{-\delta t}
                   + \vartheta\int_0^tp^{-P_s}e^{-\delta s}ds\Big)^{-k}\Big]\\
     &\geq \frac 1t \log\Bigg(\Big(X_0^{-1}e^{(p^k\log\frac1p-\delta)t}
                   + \vartheta\int_0^t e^{(p^k\log\frac1p-\delta)s}ds\Big)^{-k}
                   \mathbb P(P_s \leq p^ks\text{ for all }s\leq t)\Bigg)\\
     &\xrightarrow{t\to\infty}
       -k(p^k\log\tfrac1p-\delta) - (1 - p^k + p^k\log(p^k))
      = \delta k - (1 - p^k).
  \end{align*}
  With the same argument as at the end of the proof of $(U_2)$, one can recognise that
  the bound in $(L_1)$ is stronger than $A_\gamma$ if $\delta\leq p^k\log\tfrac1p$.\\[0.5em]
  \emph{2.} Recalling that $Y_t:=-\log X_t$ and letting $T_z:=\inf\{t>0:Y_t=z\}$, we show
  that (i) there is $z\geq 0$ such that $\E[T_z|Y_0=z] < \infty$ and (ii)
  $\mathbb P(T_z<\infty|Y_0 = y) = 1$ for all $y\geq 0$:\\
  Since $\alpha'_0>\log\tfrac1p$, there have to be $\xi>0$ and $\varepsilon>0$ such
  that $\alpha(x)/x \geq (1+\varepsilon)\log\tfrac1p$ for all $x\in(0,\xi]$. Setting
  $z=-\log\xi<\infty$, we see that $\beta(y)\geq(1+\varepsilon)\log\tfrac1p=:\zeta$
  for all $y\geq z$. We define
  \[
    S
     := S_{(z, z+\log(1/p)]}
     := \inf\{t:z<Y_t\leq z+\log(1/p)\}.
  \]
  Then, $\E[S | Y_0 = y] < \infty$ for all $y\leq z$. Indeed, the
  probability for at least $z/\log(1/p)$ jumps in some small time interval
  of length $\varepsilon'>0$ is positive. After the first such time interval
  we can be sure that $S$ has occurred. By finiteness of first moments of
  geometric distributions, $\E[S | Y_0 = y] < \infty$ follows. By a restart
  argument, we have to show that $\E[T_z | Y_0 = y] < \infty$ for all
  $z<y\leq z+\log(1/p)$, which will be done by using a
  comparison argument. For this, let $\mathcal R = (R_t)_{t\geq 0}$ be a
  process with generator
  \[
    \cG_{\mathcal R}g(y)
     = g(y + \log(1/p)) - g(y) -\zeta g'(y).
  \]
  If $z < R_0 = Y_0 \leq z + \log(1/p)$, then -- using the same Poisson
  processes for $\mathcal Y$ and $\mathcal R$ -- we have that
  $T_z \leq T_z^{\mathcal R} := \inf\{t\geq 0:R_t = z\}$ since
  $\beta(y) \geq \zeta$ for $y\geq z$. Analogously to the argument following
  \eqref{eq:bound-Y}, we see that $R_t\to-\infty$ as $t\to\infty$ almost
  surely, which implies that $T_z^{\mathcal R}<\infty$. Since
  $(R_t - R_0 + t(\zeta - \log(1/p)))_{t\geq 0}$ is a martingale, we
  have by optional stopping that
  $
   \E[R_0 - R_{T^{\mathcal R}_z}]
    = R_0 - z
    = (\zeta - \log(1/p)) \E[T_z^{\mathcal R}]
    = \varepsilon \log(1/p) \E[T_z^{\mathcal R}]
  $,
  hence $\E[T_z^{\mathcal R} | Y_0 = R_0] \leq 1/\varepsilon < \infty$.
  It is now straightforward to obtain the properties (i) and (ii).\\
  Now, by (i) and homogeneity, $\cY$ is a positively recurrent delayed
  regenerative process in the sense of Definition 7.1.1 in \cite[p. 387]{BladtNielsen2017}
  with regeneration cycles starting at the state $z$ (and so is $\cX$ with
  cycles starting in $\xi$). By (ii), the delay is almost surely finite.
  Hence, Theorem 7.1.4 in \cite[p. 388]{BladtNielsen2017} gives us weak
  convergence of $\cY$ to a finite random variable $Y_\infty$ and thus,
  also $\cX$ has a weak limit $X_\infty:=e^{-Y_\infty}>0$. Furthermore,
  recalling that $x_\alpha=\min\{x\in(0,1]:\alpha(x)=0\}$, this is well-defined
  since $\alpha$ is positive on $(0,\xi)$ and Lipschitz-continuous on $[\xi,1]$,
  $\alpha'_0>0$ and $\alpha(1)\leq0$. This implies that the hitting time of
  $[0,x_\alpha)$ of $\cX$ is almost surely finite. After that, $\cX$ will never
  hit $[x_\alpha,1]$ again, since it will always jump before it can reach
  $x_\alpha$. Thus, $\Pw(X_\infty\in(0,x_\alpha))=1$.

  \noindent
  If $f:[0,1]\to\R$ is measurable and bounded, Theorem 7.1.6 of
  \cite[p. 391]{BladtNielsen2017} as well as the weak convergence give us that
  almost surely
  \begin{align}
    \lim_{t\to\infty}\frac1t\int_0^tf(X_s)ds
     &= \lim_{t\to\infty}\frac1t\int_0^t\E[f(X_s)]ds
      = \E[f(X_\infty)].\label{eq:ergodic-thm-for-X}
  \end{align}
  Since for $t>0$ and a continuous and bounded function $f$ the distribution
  $\mu$ of $X_\infty$ satisfies
  \begin{align*}
    \E_{\mu}[f(X_t)]
     &= \E\big[\E_{X_\infty}[f(X_t)]\big]
      = \lim_{s\to\infty}\E\big[\E_{X_s}[f(X_t)]\big]
      = \lim_{s\to\infty}\E[f(X_{t+s})]
      = \E[f(X_\infty)],
  \end{align*}
  $\mu$ is a stationary distribution. Letting $\nu$ be a stationary distribution
  of $\cX$ we obtain from $\eqref{eq:ergodic-thm-for-X}$ that for all $t$
  \begin{align*}
    \E_\nu[f(X_1)]
     &= \frac1t\int_0^t\E_\nu[f(X_s)]ds
      = \E[f(X_\infty)],
  \end{align*}
  by which the stationary distribution must be unique. In particular,
  by stationarity $\E[\cG_\cX f(X_\infty)]=0$ holds. Hence, choosing
  $f(x)=\log(\rho+x)$ for $\rho>0$, we obtain
  \begin{align*}
    0
     &= \E\Big[\log\Big(\frac{\rho+pX_\infty}{\rho+X_\infty}\Big)
       + \frac{\alpha(X_\infty)}{\rho+X_\infty}\Big]
      \xrightarrow{\rho\to0} -\log(\tfrac1p) + \E[X_\infty^{-1}\alpha(X_\infty)]
  \end{align*}
  by monotone convergence, since $X_\infty>0$ almost surely.  (Note
  here, that this argument works for both cases $(C_1)$ and $(C_2)$.)
  On the other hand, choosing $f(x)=x^k$, it follows
  \begin{align*} 
    0 
     &= \E[(pX_\infty)^k - X_\infty^k + \alpha(X_\infty)kX_\infty^{k-1}],
  \end{align*}
  which implies
  $\displaystyle
    \E[X_\infty^k]
      = \frac k{1-p^k}\E[X_\infty^{k-1}\alpha(X_\infty)].
  $
\end{proof}

\begin{remark}\label{rem:prop:convergence-pw.det.MPs}
  \ 
  \begin{enumerate}
  \item For $\alpha'_0<\log\tfrac1p<\hat\alpha$, Theorem
    \ref{thm:convergence-$p$-jump-pros} is inconclusive. The weak
    estimates using $\hat\alpha$ in \eqref{eq:bound-Y} and
    \eqref{eq:U1} offer room for improvement. Also note here that
    for the application of $(L_1)$ and $(L_2)$ for $\alpha(0)=0$,
    one can often choose $\delta=\alpha'(0)=\alpha'_0$.
    \item
      Using the notion of \emph{strong large deviations}, e.g.
      \cite[Theorem 3.5, p.1868]{ChagantySethuraman1993}, one can compute exact asymptotics:
      \begin{enumerate}
        \item
          If $x>1$, then
          $\displaystyle{
            \mathbb P(P_t\geq xt)
              \sim \frac{\sqrt x}{x-1}
                \cdot\frac{e^{-t(1-x+x\log x)}}{\sqrt{2\pi t}}}$,
        \item
          If $0<x<1$, then
          $\displaystyle{
            \mathbb P(P_t\leq xt)
              \sim \frac1{(1-x)\sqrt x}
                \cdot\frac{e^{-t(1-x+x\log x)}}{\sqrt{2\pi t}}}$,
      \end{enumerate}
      where $\sim$ denotes asymptotic equivalence, i.e. $f\sim g\Leftrightarrow f(t)/g(t)\to 1$
      as $t\to\infty$.
      Applying these to $u_1$ and $u_2$ in \eqref{eq:u(t)} would provide more precise
      upper bounds in $(U_1)$ and $(U_2)$. For similar stronger bounds in $(L_1)$ and $(L_2)$
      however, one would need strong large deviation results for Poisson processes.
    \item Dropping the boundedness of the state space of $\cX$,
      i.e. considering $s_\alpha=\infty$, \eqref{eq:bound-Y}
      holds letting $\hat\alpha=\sup_{x\in\R_+}\tfrac1x\alpha(x)$ and
      so does \emph{1.} as well as $(U_1)$. Also, $(L_1)$ and $(L_2)$
      still apply, starting $\mathcal W$ at $W_0:=\min\{1,X_0\}$. But
      we need boundedness from below of $\cY$ to prove \eqref{eq:u(t)}
      and the finiteness of $\E[S|Y_0=y]$ and thus lose $(U_2)$ as
      well as the limit results of \emph{2.}
    \item Whenever $(X_t)$ is a $p$-jump process with drift $\alpha$,
      the process of the $k$th powers, $(X_t^k)$, is a $p^k$-jump
      process with drift
      $\alpha_k:x\mapsto kx^{1-\frac1k}\alpha(x^{\frac1k})$ holding
      $\hat\alpha_k=k\hat\alpha$ as well as
      $(\alpha_k)'_0=k\alpha'_0$.  Hence, it would suffice to prove
      $(U_1)$ and $(U_2)$ for $k=1$. For $(L_1)$ and $(L_2)$ however,
      the lower bound on $\alpha$ only implies that
      $\alpha_k(x)\geq k\delta x - k\vartheta x^{1+\frac1k}$, such
      that for $k>1$ we can not use the process $\mathcal W$ in
      \eqref{eq:W-explicit} as a lower bound process and the proof
      fails.
  \end{enumerate}
\end{remark}

\subsection{Proof of Corollary \ref{cor:p-jump-concave}}\label{sec:pr-cor-concave}

\begin{proof}
  The concavity of $\alpha$ implies that $x_\alpha$ equates to $x_\alpha$ and $s_\alpha$
  from Theorem \ref{thm:convergence-$p$-jump-pros}. Noting that, if $I=\R^+$ and
  $s=\max\{x_\alpha,X_0\}$, $\alpha'(s)>-\infty$, by the same scaling argument as in
  the beginning of the proof of Theorem \ref{thm:convergence-$p$-jump-pros} we can
  assume that $I=[0,1]$.\\[.5em]
  We start with $p\in(0,1)$:
  Since $\alpha$ is concave, $x\mapsto\frac{\alpha(x)-\alpha(y)}{x-y}$ decreases on $(y,1]$
  and $y\mapsto\frac{\alpha(x)-\alpha(y)}{x-y}$ decreases on $(0,x)$. Thus, by the assertions,
  $|\frac{\alpha(x)-\alpha(y)}{x-y}|$ is bounded on each interval $[\varepsilon,1]$ and bounded
  on $[0,1]$, if $\alpha'_0<\infty$. Hence, either $(C_1)$ or $(C_2)$ holds, $\alpha$ suffices
  the assertions of Theorem \ref{thm:convergence-$p$-jump-pros} and the existence of $\cX$
  follows and so does \emph3 by Theorem \ref{thm:convergence-$p$-jump-pros}.\emph2.\\
  Recall that $\alpha'_0=\alpha'(0)$, if $\alpha'_0<\infty$. Now, for \emph{1.} we want to
  apply $(L_1)$ and $(L_2)$ of Theorem \ref{thm:convergence-$p$-jump-pros} using
  $\delta=\alpha'_0=\alpha'(0)$. Let us first assume that $\alpha''(0)>-\infty$. Then,
  there is $\vartheta>0$ such that $\alpha'(0)x-\vartheta x^2\leq\alpha(x)$ for all
  $x\in[0,1]$.
  (The Taylor expansion delivers the existence of a $\vartheta_0$ such that
  $\alpha'(0)x-\vartheta_0x^2\leq\alpha(x)$ holds for $x$ in some interval $[0,\varepsilon)$.
  The boundedness of $\alpha$ ensures that we can choose $\vartheta_1$ such that
  $\alpha'(0)x-\vartheta_1x^2\leq\alpha(x)$ holds for $x\geq\varepsilon$. Let
  $\vartheta=\max\{\vartheta_0,\vartheta_1\}$.)\\
  On the other hand, if $\alpha''(0)=-\infty$, $\alpha$ has no parabolic lower bound as
  we need for $(L_1)$ and $(L_2)$ (e.g. for $\alpha(x):=ax - bx^{3/2}$ with $b\geq a>0$).
  We solve this by a coupling argument: Therefore, let $p$ be fixed and for all $n$ define
  \begin{align*}
    \alpha_n(x)
     &= \min\{\alpha(x),x\alpha(\tfrac1n)\},
  \end{align*}
  the minimum of $\alpha$ and the secant of $\alpha$ intersecting at 0 and $\tfrac1n$.
  Surely, the $\alpha_n$ satisfy the conditions of Theorem \ref{thm:convergence-$p$-jump-pros},
  $\alpha_n\nearrow\alpha$ point-wise, $\alpha''(0)=0>-\infty$ and $\alpha_n(0)=0$ such
  that $(\alpha_n)'_0=\alpha_n'(0)$. Let $\cX^{(n)}$ be processes with respective generators
  \begin{align*}
    \cG_nf(x)
     &= f(px)-f(x)+\alpha_n(x)f'(x)
  \end{align*}
  coupled in such a way that they jump simultaneously with $\cX$ and each started in $X_0$.
  Then, $X^{(n)}_t\leq X_t$ for all $n$ and thus
  \begin{align*}
    \liminf_{t\to\infty}\tfrac1t\log\E[X_t^k]
     &\geq \sup_n\liminf_{t\to\infty}\tfrac1t\log\E[(X_t^{(n)})^k].
\intertext{
  From the previous case, we obtain for every $n$
}
    \liminf_{t\to\infty}\tfrac1t\log\E[(X_t^{(n)})^k]
     &\geq
       \begin{cases}
         -(1-p^k-\alpha_n'(0)k) & \text{if }\alpha_n'(0)\leq p^k\log\tfrac1p\\[0.5em]
         -(1-\tfrac1{\gamma_n}(1+\log\gamma_n)) & \text{if }p^k\log\tfrac1p<\alpha_n'(0)<\log\tfrac1p,
       \end{cases}
  \end{align*}
  where $\gamma_n=\log\tfrac1p/\alpha_n'(0)$.  Since
  $\alpha_n'(0)=\alpha(\tfrac1n)/(\tfrac1n)\nearrow\alpha'(0)$ and the
  bounds in either case increase in $\alpha_n'(0)$, we obtain "$\leq$"
  in \emph1. Since $\alpha$ is concave, it satisfies
  $\alpha'(0)=\hat\alpha$ and thus, $(U_1)$ and $(U_2)$ of Theorem
  \ref{thm:convergence-$p$-jump-pros} provide "$\geq$" and \emph{1.} follows.\\
  For the case $\alpha'_0=\log\frac1p$ consider a family
  $(\cX^{(p)})_{p\in(0,1)}$ of processes with respective generators as
  in \eqref{eq:generator-of-p-jump-process} with $\alpha$ fixed and
  $p\mapsto X_0^{(p)}$ constant, coupled in such a way that the
  processes jump simultaneously. Then, for every $t$ the map
  $p\mapsto X_t^{(p)}$ is increasing.  Since
  $p^k\log\frac1p<\alpha'_0$ for $p$ near $p^\ast:=e^{-\alpha'_0}$, we
  conclude from \emph{1.} and the boundedness of the $\cX^{(p)}$ that
  \begin{align*}
    0
     &\geq \lim_{t\to\infty}\tfrac1t\log\E[(X_t^{(p^\ast)})^k]
      \geq \sup_{p<p^\ast}\lim_{t\to\infty}\tfrac1t\log\E[(X_t^{(p)})^k]\\[1em]
     &= \sup_{p<p^\ast}-\Bigg(1-\frac{\alpha'_0}{\log\tfrac1p}
                         \Big(1-\log\Big(\frac{\alpha'_0}{\log\tfrac1p}\Big)\Big)\Bigg)
      = 0.
  \end{align*}
  Since $x\mapsto\frac1x\alpha(x) = \frac1x(\alpha(x)-\alpha(0)) + \frac1x\alpha(0)$
  is decreasing, Theorem \ref{thm:convergence-$p$-jump-pros}.\emph2 implies
  \begin{align*}
    \alpha'_0
     &\geq \E\Big[\liminf_{t\to\infty}\frac{\alpha(X_t^{(p^\ast)})}{X_t^{(p^\ast)}}\Big]
      \geq \sup_{p>p^\ast}\E\Big[\liminf_{t\to\infty}\frac{\alpha(X_t^{(p)})}{X_t^{(p)}}\Big]
      = \sup_{p>p^\ast}\log\tfrac1p
      = \alpha'_0
\intertext{and hence almost surely}
    &\liminf_{t\to\infty}\frac{\alpha(X_t^{(p^\ast)})}{X_t^{(p^\ast)}}
      = \lim_{t\to\infty}\frac{\alpha(X_t^{(p^\ast)})}{X_t^{(p^\ast)}}
      = \alpha'_0.
  \end{align*}
  Since $x\mapsto\alpha(x)/x$ is decreasing, $\limsup_tX^{(p^\ast)}_t$ has to be bounded
  by $m_\alpha:=\sup\{x\in[0,1]:\alpha(x) = x\alpha'_0\}$. Finally, if $\alpha$ is strictly
  concave near 0, $\alpha(x)/x$ strictly decreases near 0 and thus $m_\alpha = 0$, which
  concludes the proof for $p\in(0,1)$.\\[.5em]
  Considering $p=1$, we can ignore the jumps and $\cX$ becomes deterministic, i.e. the
  solution of $\dot X_t=\alpha(X_t)$. Then, it holds if $\alpha'_0<0=\log(\frac1p)$ that
  \begin{align*}
    \frac1t(\log(X_t)-\log(X_0))
     &= \frac1t\int_0^t\frac{\dot X_s}{X_s}ds
      = \frac1t\int_0^t\frac{\alpha(X_s)}{X_s}ds
      \leq \alpha'_0
      < 0.
  \end{align*}
  Hence, $X_t\to0$, $\alpha(X_t)/X_t\to\alpha'_0$ and $\frac1t\log(X_t^k)\to k\alpha'_0$,
  which aligns with \emph1. However, if $\alpha'_0=0$, since $\alpha$ is concave it is
  non-positive. Then, $\cX$ is constant, if $\alpha(X_0)=0$, and it converges monotonically
  to $m_\alpha=\sup\{x\in[0,1]:\alpha(x)=0\}$ if $X_0>m_\alpha$.
  Thus, $\lim_{t\to\infty}\frac1t\log(X_t)=\lim_{t\to\infty}\alpha(X_t)/X_t=0$ giving us
  \emph2. Lastly, in the case of $\alpha'_0>0$, $\cX$ will either grow towards $x_\alpha$
  if started below, i.e. $X_0\in(0,x_\alpha)$, or fall towards it if started above.
  Either way, $X_t\to x_\alpha$, $\alpha(X_t)\to0$ and we obtain \emph3.\\[.5em]
  Choosing $p=0$, $\cX$ would jump to 0 after an exponentially distributed time $T$ with mean
  1 and stay there indefinitely. Thus, we can write $X_t=Y_t\cdot\1_{\{T>t\}}$, where $(Y_t)$
  is the deterministic process arising for $p=1$ discussed above. Then clearly, $\cX$ will
  always converge to 0 almost surely and
  \begin{align*}
    -\frac1t\log(\E[X_t^k])
     &= -\frac 1t\log\Pw(T>t) - \frac kt\log(Y_t)
      \xrightarrow{t\to\infty} 1 + \max\{0,-k\alpha'_0\}.
  \end{align*}
\end{proof}

\section{Branching processes with binomial disasters}\label{sec:bps}

In the following subsections, we borrow ideas from the notion
of duality of Markov processes; see Chapter 4.4 in \cite{EK86}.

\noindent
Recall that two Markov processes $\cZ = (Z_t)_{t\geq 0}$ and
$\mathcal X = (X_t)_{t\geq 0}$ with state spaces $E$ and $E'$ are called
dual with respect to the function $H: E\times E' \to\mathbb R$ if
\begin{align}\label{eq:dual}
  \E[H(Z_t, x) | Z_0=z] = \E[H(z, X_t)|X_0=x]\tag{D}
\end{align}
for all $z\in E, x\in E'$. When one is interested in the process
$\mathcal Z$, this relationship is most helpful if the process
$\mathcal X$ is easier to analyse than the process $\mathcal Z$.
Moreover, frequently, the set of functions $\{H(\cdot,x): x\in E'\}$
is separating on $E$ such that the left hand side of~\eqref{eq:dual}
determines the distribution of $Z_t$. In this case, the distribution
of the simpler process $\mathcal X$ determines via~\eqref{eq:dual} the
distribution of $\mathcal Z$, so analysing $\mathcal Z$ becomes
feasible.

\noindent
There is no straightforward way how to find dual processes, but they
arise frequently in the literature; see \cite{JansenKurt2014} for a
survey. Examples span reflected and absorbed Brownian motion,
interacting particle models such as the voter model and the contact
process, as well as branching processes.

\noindent
A simple way to verify \eqref{eq:dual} for homogeneous $\cZ$
and $\cX$, is to show that
\begin{align}
  \tfrac\partial{\partial t}\E[H(Z_t,x)|Z_0=z]\big|_{t=0}
   &= \tfrac\partial{\partial t}\E[H(z,X_t)|X_0=x]\big|_{t=0}\label{eq:dual-gens}\tag{D'}
\end{align}
for all $z$ and $x$, since then both sides of $\eqref{eq:dual}$ follow the
same evolution.

\subsection{Proof of Theorem \ref{thm:bp-arb.off.dis}}\label{sec:pr-thm2}

In this section we will discuss branching processes of the form of Definition
\ref{def:hom-bp-abr-off-dis}. Hence, let $\cZ:=\cZ^h_{\lambda,q,\kappa,p}$, where
$\lambda\in(0,\infty)$ is the death-rate, $q=(q_k)_{k\geq0}$ the offspring distribution
on $\N_0$, $p\in(0,1)$ the survival probability of the disasters that occur at the
jump times of $(D_t)_{t\geq0}$, a Poisson process with rate $\kappa>0$. Moreover, let
$h:[0,1]\to[0,1],x\mapsto\sum_{k\geq0}q_kx^k$ be the probability generating function
of the offspring distribution. We start with establishing a suitable duality for $\kappa=1$.
The general case will follow by a rescaling argument.

\begin{lemma}\label{lem:arb-off-dualilty}
  Let $p\in[0,1]$, $\kappa=1$ and $(X_t)$ be a $p$-jump process with drift
  $x\mapsto\lambda(1-x-h(1-x))$, having the generator
  \begin{align}
    \mathcal G_{\mathcal X}f(x)
     &= f(px)-f(x) + \lambda\big(1-x-h(1-x)\big)f'(x)
     \label{eq:generator-dual-arb.off.dis}
  \end{align}
  for $f\in\mathcal C^1_b([0,1])$. Then, the duality relation
  \begin{align}
    \E[(1-X_t)^z|X_0=x]
     &= \E[(1-x)^{Z_t}|Z_0=z]
     \label{eq:duality-arb.off.dis}
  \end{align}
  holds for every $x\in[0,1],z\in\N_0$ and $t\geq0$.
\end{lemma}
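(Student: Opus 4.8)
The plan is to verify the duality relation~\eqref{eq:duality-arb.off.dis} by checking the infinitesimal criterion~\eqref{eq:dual-gens}, i.e.\ that the generators of $\cZ$ and $\cX$ act identically on the duality function. Setting $H(z,x):=(1-x)^z$, I would compute both sides of~\eqref{eq:dual-gens} and show they agree. The right-hand side uses $\cG_\cX$ applied to $x\mapsto H(z,x)=(1-x)^z$, while the left-hand side uses $\cG_\cZ$ applied to $z\mapsto H(z,x)=(1-x)^z$. Once the generators match on this separating family of functions (the monomials $(1-x)^z$ in $z$, equivalently polynomials, determine the distribution of the $\N_0$-valued $Z_t$), the standard duality argument of Chapter~4.4 in \cite{EK86} yields~\eqref{eq:duality-arb.off.dis} for all $t\geq0$.

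First I would handle the branching part of $\cG_\cZ$. With $f(z)=(1-x)^z$ we have $f(z-1+k)-f(z)=(1-x)^z\big((1-x)^{k-1}-1\big)$, so the reproduction term contributes $\lambda z (1-x)^{z-1}\sum_k q_k\big((1-x)^k-(1-x)\big)=\lambda z(1-x)^{z-1}\big(h(1-x)-(1-x)\big)$. Meanwhile, differentiating $H(z,x)=(1-x)^z$ in $x$ gives $\frac{\partial H}{\partial x}=-z(1-x)^{z-1}$, so the drift term in $\cG_\cX$, namely $\lambda(1-x-h(1-x))\frac{\partial H}{\partial x}$, equals $\lambda(1-x-h(1-x))\cdot\big(-z(1-x)^{z-1}\big)=\lambda z(1-x)^{z-1}\big(h(1-x)-(1-x)\big)$. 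These two expressions coincide, which is precisely the classical branching duality~\eqref{eq:ODE} rewritten in terms of $1-\cX$.

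Next I would match the disaster part. On the $\cZ$-side, the disaster term applied to $z\mapsto(1-x)^z$ is $\kappa\sum_{k=0}^z\binom{z}{k}p^k(1-p)^{z-k}\big((1-x)^k-(1-x)^z\big)$. Using the binomial theorem, $\sum_{k=0}^z\binom{z}{k}(p(1-x))^k(1-p)^{z-k}=\big(p(1-x)+(1-p)\big)^z=(1-px)^z$, so this term collapses to $\kappa\big((1-px)^z-(1-x)^z\big)$. On the $\cX$-side with $\kappa=1$, the jump term of $\cG_\cX$ is $H(z,px)-H(z,x)=(1-px)^z-(1-x)^z$, which matches exactly. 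Thus~\eqref{eq:dual-gens} holds termwise and the lemma follows.

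The main obstacle is not the algebra, which is routine, but the analytic justification that the formal generator identity upgrades to the integrated duality~\eqref{eq:duality-arb.off.dis} for all $t$. Here I would invoke that $H(\cdot,x)$ and its relevant images lie in the domains of both generators: $\cZ$ is a well-defined Markov process on $\N_0$ with the stated generator, and by Lemma~\ref{lem:arb-off-dualilty}'s hypothesis $\cX$ is a genuine $p$-jump process (existence guaranteed by Theorem~\ref{thm:convergence-$p$-jump-pros} applied to the concave drift $x\mapsto\lambda(1-x-h(1-x))$, whose properties follow from $h$ being a pgf). Since $H$ is bounded on $[0,1]\times\N_0$ and the family $\{H(\cdot,x):x\in[0,1]\}$ is measure-determining on $\N_0$, the duality follows from the standard theorem; one should confirm that $t\mapsto\E[H(Z_t,x)]$ and $t\mapsto\E[H(z,X_t)]$ are the unique bounded solutions of the common forward equation, which is where the boundedness of $H$ and the regularity of both semigroups do the work.
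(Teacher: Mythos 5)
Your proposal is correct and follows essentially the same route as the paper: compute $\cG_\cZ$ applied to $z\mapsto(1-x)^z$, collapse the reproduction term via the pgf $h$ and the disaster term via the binomial theorem into $(1-px)^z-(1-x)^z$, observe the result equals $\cG_\cX$ applied to $x\mapsto(1-x)^z$, and invoke \eqref{eq:dual-gens}. Your added remarks on upgrading the formal generator identity to the integrated relation \eqref{eq:duality-arb.off.dis} are a sensible elaboration of a step the paper passes over silently, but they do not constitute a different argument.
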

\begin{proof}
  Recalling the generator of $\cZ$ from Definition \ref{def:hom-bp-abr-off-dis}, we
  obtain for $x\in[0,1],z\in\mathbb N_0$ and $H(x,z):=(1-x)^z$ that
  \begin{align*}
    \big(\mathcal G_{\mathcal Z} & H(x,\,\cdot\,)\big)(z)\\
     &= \lambda z(1-x)^{z-1}\sum_{k\geq0}q_k\big((1-x)^k-(1-x)\big)
       +\sum_{\ell=0}^z\genfrac(){0pt}{}z\ell\big(p(1-x)\big)^\ell(1-p)^{z-\ell} - (1-x)^z\\[1em]
     &= -\frac{\partial}{\partial x}(1-x)^z\cdot\lambda\big(h(1-x)-(1-x)\big)
        +\big(p(1-x)+(1-p)\big)^z - (1-x)^z\\[1em]
     &= \big(\mathcal G_{\mathcal X}H(\,\cdot\,,z)\big)(x),
  \end{align*}
  which resembles \eqref{eq:dual-gens}. Hence, \eqref{eq:dual} gives us the desired
  relation.
\end{proof}

\noindent
Now, we apply Corollary \ref{cor:p-jump-concave} to the dual process $\cX$, followed by the proof of
Theorem \ref{thm:bp-arb.off.dis}.

\begin{lemma}\label{lem:arb-off-appThm1}
  For $p\in[0,1]$, such a process $\cX$ in Lemma \ref{lem:arb-off-dualilty} exists
  and satisfies
  \begin{enumerate}
  \item if $p=0$ or $h'(1) < 1+\tfrac1\lambda\log\tfrac1p$,
    $X_t\xrightarrow{t\to\infty}0$ almost surely.  Also, for $k\geq1$
    \begin{align*}
      \lim_{t\to\infty}-\tfrac1t\log\E[X_t^k]
      &= \begin{cases}
        1+\max\{0,-k(h'(1)-1)\} & \text{if }p=0,\\
        1-p^k - k\lambda(h'(1)-1)) & \text{if }h'(1)\leq 1+\tfrac{p^k}\lambda\log\tfrac1p,\\
        1-\tfrac1\gamma(1+\log\gamma) & \text{otherwise,}
      \end{cases}
    \end{align*}
    where $\gamma=\tfrac1\lambda\log\tfrac1p/(h'(1)-1)$.
  \item if $h'(1)=1+\tfrac1\lambda\log\tfrac1p$,
    $X_t\xrightarrow{t\to\infty}0$ almost surely, while
    $\frac1t\log\E[X_t^k]\xrightarrow{t\to\infty}0$ for all $k$.
  \item if $h'(1)\in(1+\tfrac1\lambda\log\tfrac1p,\infty]$, letting
    $x_\ast$ be the smallest fixed point of $h$, $\cX$ converges weakly
    to a random variable $X_\infty$ on $(0,1-x_\ast]$ that satisfies
    $\E[X_\infty^{-1}(1-h(1-X_\infty))]=1+\tfrac1\lambda\log\tfrac1p$
    and for $k\geq1$
    \begin{align}
      \E[X_\infty^k]
        &= \frac{\lambda k}{\lambda k + 1-p^k}\E[X_\infty^{k-1}(1-h(1-X_\infty))].
         \label{eq:recursion-dual-moments-arb-off}
    \end{align}
  \end{enumerate}
\end{lemma}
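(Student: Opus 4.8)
The plan is to translate Lemma \ref{lem:arb-off-appThm1} into a direct application of Corollary \ref{cor:p-jump-concave} to the specific drift $\alpha(x) := \lambda(1 - x - h(1-x))$ appearing in the generator \eqref{eq:generator-dual-arb.off.dis}. Since the corollary already does all the heavy lifting for concave drifts on $[0,1]$, the entire proof amounts to verifying that this particular $\alpha$ satisfies the hypotheses of the corollary, and then computing the three relevant quantities $\alpha'_0$, $x_\alpha$, and $m_\alpha$ in terms of the offspring pgf $h$ so that cases \emph{1.}--\emph{3.} of the corollary match cases \emph{1.}--\emph{3.} of the lemma.

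\emph{First} I would establish the structural properties of $\alpha$. The key observation is that $\alpha(x) = \lambda(1 - x - h(1-x))$ is concave on $[0,1]$: writing $g(x) = 1 - x - h(1-x)$, one computes $g''(x) = -h''(1-x) \leq 0$ since $h$ is a pgf and hence convex with $h'' \geq 0$. Next I would check the boundary conditions of Definition \ref{def:p-jump-process}: at $x=0$ we have $\alpha(0) = \lambda(1 - h(1)) = 0$ (as $h(1)=1$), so $0$ is absorbing, consistent with the process living on $[0,1]$; at $x=1$ we have $\alpha(1) = \lambda(-h(0)) = -\lambda q_0 \leq 0$, giving $\alpha(1) \leq 0$ as required. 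The derivative condition $\alpha'(1) > -\infty$ (needed for the $I=[0,1]$ branch of the corollary) follows from $\alpha'(x) = \lambda(-1 + h'(1-x))$, so $\alpha'(1) = \lambda(h'(0) - 1) = \lambda(q_1 - 1)$, which is finite. For the second-derivative hypothesis $\alpha''(0) \in [-\infty, 0]$, I note $\alpha''(0) = -\lambda h''(1) \leq 0$, with the value $-\infty$ permitted when the offspring distribution has infinite variance.

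\emph{Second} I would compute the critical slope $\alpha'_0 = \alpha'(0) = \lambda(h'(1) - 1)$ (using $\alpha'_0 = \alpha'(0)$ when finite, per the corollary; when $h'(1) = \infty$ one reads off $\alpha'_0 = \infty$ directly from the limit definition). The threshold $\alpha'_0 \lessgtr \log\tfrac1p$ then becomes $\lambda(h'(1)-1) \lessgtr \log\tfrac1p$, i.e. $h'(1) \lessgtr 1 + \tfrac1\lambda\log\tfrac1p$, exactly matching the case distinctions in the lemma. Plugging $\alpha'_0 = \lambda(h'(1)-1)$ into the three subcases of Corollary \ref{cor:p-jump-concave}.\emph{1.} yields the stated rates, with $\gamma = \log(\tfrac1p)/\alpha'_0 = \tfrac1\lambda\log(\tfrac1p)/(h'(1)-1)$ as claimed. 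In case \emph{3.} I would identify $x_\alpha = \sup\{x : \alpha(x) > 0\}$ with $1 - x_\ast$, where $x_\ast$ is the smallest fixed point of $h$: indeed $\alpha(x) = 0 \iff h(1-x) = 1-x$, so the zeros of $\alpha$ in $(0,1]$ correspond to fixed points of $h$ in $[0,1)$, and the smallest such fixed point $x_\ast$ gives the largest zero $1 - x_\ast$ of $\alpha$. The stationarity identities then transcribe directly: $\E[X_\infty^{-1}\alpha(X_\infty)] = \log\tfrac1p$ becomes $\lambda\E[X_\infty^{-1}(1-X_\infty - h(1-X_\infty))] = \log\tfrac1p$, which rearranges to $\E[X_\infty^{-1}(1-h(1-X_\infty))] = 1 + \tfrac1\lambda\log\tfrac1p$, and the moment recursion $\E[X_\infty^k] = \tfrac{k}{1-p^k}\E[X_\infty^{k-1}\alpha(X_\infty)]$ becomes \eqref{eq:recursion-dual-moments-arb-off} after substituting $\alpha$ and collecting the $\E[X_\infty^k]$ term.

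\textbf{The main obstacle} I anticipate is case \emph{2.}, the critical case $h'(1) = 1 + \tfrac1\lambda\log\tfrac1p$. Corollary \ref{cor:p-jump-concave}.\emph{2.} only guarantees $\limsup_t X_t \leq m_\alpha$ in general, and gives $X_t \to 0$ almost surely only under strict concavity of $\alpha$ near $0$. Here I would argue that $\alpha$ is strictly concave on some $(0,\varepsilon)$ precisely when $h''$ is not identically zero near $1$, i.e. when the offspring distribution is not concentrated on $\{0,1\}$; in the genuinely branching case this holds, forcing $m_\alpha = 0$ and hence $X_t \to 0$ almost surely. The degenerate pure-death case $q_0 + q_1 = 1$ would need to be handled separately or excluded, and I would need to confirm that the rate statement $\tfrac1t\log\E[X_t^k] \to 0$ follows from the corollary's conclusion that $-\tfrac1t\log\E[X_t^k] \to 0$ in the critical regime. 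This boundary analysis, rather than any new estimate, is where care is required; everything else is bookkeeping between the two sets of hypotheses.
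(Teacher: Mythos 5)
Your proposal follows essentially the same route as the paper: verify that $\alpha(x)=\lambda(1-x-h(1-x))$ is concave with $\alpha(0)=0$, $\alpha(1)=-\lambda q_0\leq0$, $\alpha'(1)=\lambda(q_1-1)>-\infty$ and $\alpha'_0=\lambda(h'(1)-1)$, then read off all three cases from Corollary \ref{cor:p-jump-concave}, identifying $x_\alpha=1-x_\ast$ and rearranging the stationarity identities into \eqref{eq:recursion-dual-moments-arb-off}. Your worry about the critical case \emph{2.} resolves exactly as the paper does it: there $h'(1)=1+\tfrac1\lambda\log\tfrac1p>1$ forces mass on $\{k\geq2\}$, hence strict convexity of $h$ and strict concavity of $\alpha$ near $0$, so the degenerate case $q_0+q_1=1$ cannot occur.
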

\begin{proof}
  Since $h$ is a convex function, $\alpha:x\mapsto\lambda(1-x-h(1-x))$
  is concave.  Also, $\alpha(0)=\lambda(1-h(1))=0$,
  $\alpha(1)=-\lambda q_0\leq0$,
  $\alpha'_0=\alpha'(0)=\lambda(h'(1)-1)\in(-\lambda,\infty]$ and
  $\alpha'(1)=\lambda(h'(0)-1)=\lambda(q_1-1)\geq-\lambda>-\infty$. Hence,
  considering that $\alpha'(0)\geq p^k\log\tfrac1p$ iff
  $h'(1)\leq 1+\tfrac{p^k}\lambda\log\tfrac1p$ for all $k\geq1$,
  Corollary \ref{cor:p-jump-concave} implies \emph{1}. For \emph{3.},
  noting that $\alpha(x)>0$ only if $0<x<1-x_\ast=x_\alpha$, only
  \eqref{eq:recursion-dual-moments-arb-off} remains to be shown.
  Here, Corollary \ref{cor:p-jump-concave}.\emph3 gives us for $k\geq1$
  \begin{align*}
    \E[X_\infty^k]
     &= \frac{\lambda k}{1-p^k}\big(-\E[X_\infty^k]+\E[X_\infty^{k-1}(1-h(1-X_\infty))]\big)
  \end{align*}
  and \eqref{eq:recursion-dual-moments-arb-off} follows. Finally, if
  $h'(1)=1+\frac1\lambda\log\frac1p>1$, $h$ is strictly convex and thus,
  $\alpha$ is strictly concave, which gives us \emph{2.}
\end{proof}

\begin{proof}[Proof of Theorem \ref{thm:bp-arb.off.dis}]
  First, let the theorem hold for $\kappa=1$ and for arbitrary $\kappa>0$
  consider the process $\cZ^\ast:=\cZ^h_{\lambda/\kappa,q,1,p}$. Then, 
  $(Z_t)_t:=(Z_{\kappa t}^\ast)_t$ defines a $\cZ^h_{\lambda,q,\kappa,p}$-process
  and we obtain
  \begin{align*}
    \lim_{t\to\infty}-\tfrac1t\log\Pw(Z_t>0)
     &= \kappa\cdot\lim_{s\to\infty}-\tfrac1s\log\Pw(Z_s^\ast>0),
  \end{align*}
  which shows \emph{1.} and \emph2. Since $\lim_tZ_t=\lim_tZ_t^\ast$ almost
  surely, \emph{3.} follows, where we obtain \eqref{eq:bp-arb.off.dis-surv.prob.rec}
  by substitution.\\
  Hence, without loss of generality let $\kappa=1$. Then, letting $Z_0=k$ and $X_0=1$,
  Lemma \ref{lem:arb-off-dualilty} implies,
  \begin{align}
    \mathbb P(Z_t>0)
     &= 1-\E[0^{Z_t}]
      = 1-\E[(1-X_t)^k]
      = \sum_{\ell=1}^k\genfrac(){0pt}{}k\ell(-1)^{\ell+1}\E[X_t^\ell].\label{eq:prThm2-1}
  \end{align}
  Considering that $X_t\in[0,1]$, we obtain from Bernoulli's inequality
  \begin{align}
    \E[X_t]
     &\leq 1-\E[(1-X_t)^k]
      = \mathbb P(Z_t>0)
      \leq k\E[X_t].\label{eq:prThm2-2}
  \end{align}
  Thus, noting that $h'(1)=\mu$ and $\gamma=1/\nu$, for $\mu\leq1+\tfrac1\lambda\log\tfrac1p$
  (i.e. $\nu\leq1$) Lemma \ref{lem:arb-off-appThm1}.\emph1 and \ref{lem:arb-off-appThm1}.\emph2
  show that
  \begin{align*}
    \lim_{t\to\infty}-\tfrac1t\log\mathbb P(Z_t>0)
     &= \begin{cases}
          1+\max\{0,-k(h'(1)-1)\} & \text{if }p=0,\\
          1-p-\lambda(\mu-1) & \text{if }\nu\leq p,\\
          1-\nu(1+\log\tfrac1\nu) & \text{if }p<\nu\leq1.
        \end{cases}
  \end{align*}
  Additionally, considering the boundedness and thus the $\mathcal L^1$-convergence of $(X_t)_t$,
  we get from \eqref{eq:prThm2-2} that $Z_t$ converges to 0 in probability. Since
  this implies almost sure convergence of a subsequence and 0 is an absorbing state, we have
  $Z_t\to0$ almost surely.\\[0.5em]
  For \emph{2.}, noting that
  $\mathbb P(Z_t\xrightarrow{t\to\infty}0\mid Z_s)\geq\frac1{1+\lambda}(1-p)^{Z_s}$,
  i.e. the probability that the next event after $s$ is a disaster that kills all,
  we obtain
  \begin{align*}
    \limsup_{s\to\infty}\mathbb P(Z_t\xrightarrow{t\to\infty}0\mid\sigma(Z_r;r\leq s))
     \geq \tfrac1{1+\lambda}\limsup_{s\to\infty}(1-p)^{Z_s}
     = \tfrac1{1+\lambda}(1-p)^{\liminf\limits_{s\to\infty}Z_s}.
  \end{align*}
  Thus, Lemma 3.1 of \cite[p.54]{KaplanEtAl1975} concludes
  \begin{align*}
    \mathbb P(Z_t\xrightarrow{t\to\infty}0)
     + \mathbb P(Z_t\xrightarrow{t\to\infty}\infty)
     = 1.
  \end{align*}
  Furthermore, for $\mu>1+\tfrac1\lambda\log\tfrac1p$ Lemma \ref{lem:arb-off-appThm1}.\emph3
  shows stationarity of the distribution of $X_\infty$ and hence independence of $X_0$. Thus,
  using that 0 is an absorbing state and $\{Z_s=0\}\subset\{Z_t=0\}$ for $s\leq t$, we obtain
  from \eqref{eq:prThm2-1} that
  \begin{align*}
    \mathbb P(\lim_{t\to\infty}Z_t=0)
     &= \mathbb P\Big(\bigcup_{t>0}\{Z_t=0\}\Big)
      = \lim_{t\to\infty} \mathbb P(Z_t=0)
      = \E[(1-X_\infty)^k].
  \end{align*}
\end{proof}

\subsection{Preparation: Regular Variation}\label{sec:reg-var}

In this subsection, using results of chapter VIII.9 of \cite{Feller1971} and
\cite{Seneta1976}, we will arrange the tools regarding regularly varying functions
needed for the proof of Theorem \ref{thm:inhom-bdp}. However, we need to establish
some additional notation first:

\begin{remark}\label{rem:notation}\ 
 \begin{enumerate}
  \item
    We will make use of the Bachmann-Landau notation: For a function
    $g: \R_+\to[0,\infty)$, let
    \begin{align*}
      o(g) &:= \{f: \R_+\to \R_+\mid \limsup_{t\to\infty}\tfrac{f(t)}{g(t)}=0\},\\
      O(g) &:= \{f: \R_+\to \R_+\mid \limsup_{t\to\infty}\tfrac{f(t)}{g(t)}<\infty\},\\
      \Omega(g)
           &:= \{f:\R_+\to \R_+\mid g\in O(f)\}.
    \end{align*}
  \item
    We define the relation of \emph{asymptotic equivalence} for functions
    $f,g:\R_+\to\R$ by
    \begin{align*}
      f\overset{t\to\infty}\sim g
        &\quad \Leftrightarrow \quad \lim_{t\to\infty}\frac{f(t)}{g(t)}=1.
    \end{align*}
    Often, when the running variable is either obvious or $t$, we will just write $f\sim g$.
 \end{enumerate}
\end{remark}

\begin{definition}\label{def:reg-var}
  A function $f:\R_+\to \R_+$ is called \emph{regularly varying with
    exponent $\beta\in\R$}, if for every $x>0$
  \begin{align*}
    \frac{f(xt)}{f(t)}
      \xrightarrow{t\to\infty} x^\beta
  \end{align*}
  holds. A \emph{slowly varying} function is a regularly varying
  function with exponent 0.
\end{definition}

\begin{lemma}\label{lem:reg-var}
  Let $f:\R_+\to \R_+$ regularly varying with exponent $\beta\in\R$
  and $F(t):=\int_0^tf(x)dx$.
  \begin{enumerate}
    \item
      $F$ is regularly varying with exponent $\max\{\beta+1,0\}$ and for $t\to\infty$, if
      \begin{enumerate}
        \item $\beta>-1$, then $F(t) \sim tf(t)(\beta+1)^{-1}$.
        \item $\beta=-1$, then $F(t) \in \Omega(1)\cap O(t^\varepsilon)$ for all $\varepsilon>0$.
        \item $\beta<-1$, then $F(t) \rightarrow c <\infty$.
      \end{enumerate}
    \item
      Let $(t_n)\subset \R_+$ such that $t_n\xrightarrow{n\to\infty}\infty$
      and $t_{n+1}/t_n\xrightarrow{n\to\infty}1$. Then,
      \begin{align*}
        \frac{F(t_{n+1})}{F(t_n)}
          \xrightarrow{n\to\infty} 1.
      \end{align*}
    \item
      There are functions $a$ and $\varepsilon$ such that $a(t)\xrightarrow{t\to\infty}c\in\R_+$,
      $\varepsilon(t)\xrightarrow{t\to\infty}0$ and
      \begin{align*}
        f(t)
         &= t^\beta a(t)\exp\Big(\int_1^t\frac{\varepsilon(y)}ydy\Big).
      \end{align*}
    \item
      For each $\alpha>0$, $f\in O(t^{\beta+\alpha})\cap\Omega(t^{\beta-\alpha})$.
  \end{enumerate}
\end{lemma}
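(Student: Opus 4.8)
The plan is to treat Lemma~\ref{lem:reg-var} as the standard package of Karamata theory, deriving everything from two classical inputs available in Chapter VIII.9 of \cite{Feller1971} and in \cite{Seneta1976}: the \emph{representation theorem} for slowly varying functions and the \emph{uniform convergence theorem}. The natural order is to establish \emph{3.} first, deduce \emph{4.} from it, then prove \emph{1.} using the Potter-type bounds of \emph{4.}, and finally obtain \emph{2.} as a consequence of the regular variation of $F$ together with uniform convergence. Throughout, the behaviour of $f$ near $0$ is immaterial since $f$ is locally integrable and the claims are asymptotic.

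For \emph{3.}, write $\ell(t):=f(t)t^{-\beta}$, which is slowly varying by Definition~\ref{def:reg-var}. The representation theorem then supplies $a(t)\to c>0$ and $\varepsilon(t)\to 0$ with $\ell(t)=a(t)\exp(\int_1^t\varepsilon(y)y^{-1}dy)$; multiplying by $t^\beta$ gives exactly the claimed form for $f$. Part \emph{4.} follows by a direct estimate: fix $\alpha>0$, choose $T$ with $|\varepsilon(y)|<\alpha$ for $y\geq T$, and split $\int_1^t\varepsilon(y)y^{-1}dy$ into a constant plus $\int_T^t\varepsilon(y)y^{-1}dy$, the latter lying between $\mp\alpha\log(t/T)$. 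Hence the exponential factor is squeezed between constant multiples of $t^{-\alpha}$ and $t^{\alpha}$, and since $a(t)$ has a positive limit this yields $f\in O(t^{\beta+\alpha})\cap\Omega(t^{\beta-\alpha})$.

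For \emph{1.} I would argue by cases. When $\beta>-1$, the substitution $x=ts$ gives $F(t)=t\int_0^1 f(ts)ds$, hence $F(t)/(tf(t))=\int_0^1 (f(ts)/f(t))ds$; the integrand converges pointwise to $s^\beta$ by regular variation, and the bounds from \emph{4.} (in their uniform Potter form) dominate it for large $t$ by an integrable function of $s$, so dominated convergence gives $\int_0^1 s^\beta ds=(\beta+1)^{-1}$, i.e. $F(t)\sim tf(t)(\beta+1)^{-1}$, from which $F$ is regularly varying with exponent $\beta+1$. When $\beta<-1$, the bound $f\in O(t^{\beta+\alpha})$ for small $\alpha$ shows $f$ is integrable at infinity, so $F(t)\to\int_0^\infty f(x)dx=:c<\infty$, a slowly varying (exponent $0$) limit. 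The boundary case $\beta=-1$ yields neither proportionality nor convergence: here one shows directly that $F$ is slowly varying, after which $F\in\Omega(1)$ is clear from positivity and monotonicity and $F\in O(t^\varepsilon)$ for every $\varepsilon>0$ follows from \emph{4.} applied to $F$.

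Finally, \emph{2.} follows from the uniform convergence theorem. By \emph{1.}, $F$ is regularly varying with exponent $\rho=\max\{\beta+1,0\}$, so $F(xt)/F(t)\to x^\rho$ uniformly for $x$ in compact subsets of $(0,\infty)$; writing $x_n:=t_{n+1}/t_n\to 1$ gives $F(t_{n+1})/F(t_n)=F(x_nt_n)/F(t_n)\to 1^\rho=1$, where uniformity is precisely what licenses replacing a fixed $x$ by the varying $x_n$. I expect the main obstacle to be the boundary case $\beta=-1$ of \emph{1.}: proving that $F$ is genuinely slowly varying there, rather than merely bounded below and sub-polynomial, is the substantive half of Karamata's theorem, and it is this regular-variation conclusion, combined with the uniform convergence theorem, on which the clean statement of \emph{2.} ultimately rests.
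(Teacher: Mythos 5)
Your proposal is correct, and it rests on the same two classical pillars as the paper (the representation theorem and Karamata's integral theorem), but it is organised rather differently. Where you differ: the paper's proof of \emph{1.}, \emph{3.} and \emph{4.} is essentially pure citation (Seneta's Theorem~1.1 and Proposition~$1^0$ for \emph{3.} and \emph{4.}, and the exercises on p.~86 of Seneta / Feller's Theorem~1 in VIII.9 for \emph{1.}), whereas you reconstruct the arguments: \emph{4.} from the representation in \emph{3.} by splitting $\int_1^t\varepsilon(y)y^{-1}dy$, and \emph{1.} for $\beta>-1$ via the substitution $F(t)=t\int_0^1f(ts)\,ds$ and dominated convergence. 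For \emph{2.} the routes genuinely diverge: the paper applies the representation theorem to $F$ itself and bounds $\int_{t_n}^{t_{n+1}}\mathcal E(y)y^{-1}dy$ directly, while you invoke the uniform convergence theorem and write $F(t_{n+1})/F(t_n)=F(x_nt_n)/F(t_n)$ with $x_n\to1$; both are valid, and yours trades one standard Karamata input for another. Two small caveats on your version of \emph{1.}: in the dominated convergence step the Potter bound $f(ts)/f(t)\leq Cs^{\beta-\delta}$ is only available for $ts$ bounded away from $0$, so the piece $\int_0^{T/t}f(ts)/f(t)\,ds=F(T)/(tf(t))$ must be split off and shown to vanish separately (it does, since $tf(t)\in\Omega(t^{1+\beta-\delta})\to\infty$ for $\delta<1+\beta$); and in the case $\beta=-1$ you correctly identify that the slow variation of $F$ is the substantive content of Karamata's theorem, but you ultimately defer it to that theorem just as the paper does, so neither write-up is more self-contained there.
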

\begin{proof}
  \emph3. and \emph4. follow from \cite{Seneta1976}, Theorem 1.1 on page 2 and
  Proposition $1^0$ on page 18 respectively, while \emph1. is a consequence of
  \emph4. and exercises 2.1, 2.2 and 2.3 on \cite[p.86]{Seneta1976}.
  (A proof of these exercises is given by Theorem 1 in \citealp[p.281]{Feller1971}.)\\
  Finally, by \emph{1.} $F$ is regularly varying with exponent $\beta'\geq0$.
  Applying \emph{3.} we see that there are functions $A$ and $\mathcal E$ with
  $\lim_{t\to\infty}A(t)=c\in(0,\infty)$ and $\lim_{t\to\infty}\mathcal E(t)=0$
  such that
  \begin{align*}
    \frac{F(t_{n+1})}{F(t_n)}
     &= \Big(\frac{t_{n+1}}{t_n}\Big)^{\beta'}
          \cdot\frac{A(t_{n+1})}{A(t_n)}
          \cdot\exp\Big(\int_{t_n}^{t_{n+1}}\frac{\mathcal E(y)}ydy\Big).
  \end{align*}
  Now, the first two factors converge to 1, while the integral in the exponent is bounded by
  $|t_{n+1}-t_n|\cdot\frac1{t_n}\sup_{y\in[t_n,t_{n+1}]}|\mathcal E(y)|\longrightarrow_{n\to\infty}0$.
\end{proof}

\noindent
The following Theorem is needed in the proof of Theorem \ref{thm:inhom-bdp} to build a
bridge between the asymptotics of the deterministic rate functions and the almost
sure asymptotics of the process $(L_t)$ from Lemma \ref{lem:inhom-duality}, which is
key to the computation of the survival probability in the inhomogeneous case.

\begin{theorem*}\label{thm:asymptotics-of-D-integral}
  Let $(D_t)_{t\geq0}$ be an inhomogeneous Poisson process with right
  continuous rate-function $\kappa$ with left limits,
  $\Lambda(t):=\int_0^t\kappa_sds$,
  $\Lambda^{-1}(t):=\inf\{s>0:\Lambda(s)>t\}$ and $f: \R_+\to \R_+$
  such that $f(\Lambda^{-1}(\cdot))$ is regularly varying with
  exponent $\beta$.
  \begin{enumerate}
    \item
      If $\Lambda(t)\xrightarrow{t\to\infty}\Lambda(\infty)<\infty$ or
      $\beta<-1$, then $\int_0^tf(s)dD_s$ has an almost surely finite limit.
    \item
      If $\Lambda(t)\xrightarrow{t\to\infty}\infty$ and $\beta>-1$,
      \begin{align*}
        \int_0^tf(s)dD_s
          \sim \int_0^tf(s)\kappa_sds
      \end{align*}
      holds almost surely and in $\mathcal L^2$.
    \item
      If $\beta=-1$, for arbitrary $\alpha>0$ it holds
      $\displaystyle
        \frac{\int_0^tf(s)dD_s}{\int_0^tf(s)\kappa_sds}
          \in O(t^\alpha)\cap\Omega(t^{-\alpha})
      $
      almost surely.
  \end{enumerate}
\end{theorem*}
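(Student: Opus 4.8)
The plan is to strip away the inhomogeneity by a deterministic time change and thereby reduce every assertion to a statement about the integral of a regularly varying function against a standard (unit-rate) Poisson process. Concretely, I would invoke the time-change theorem for Poisson processes: writing $\Lambda^{-1}$ for the generalized inverse of the continuous non-decreasing map $\Lambda$, the process $\tilde D_u:=D_{\Lambda^{-1}(u)}$ is a unit-rate Poisson process. Summing over jump times and using the monotone equivalence $T_i\le t\Leftrightarrow\tilde U_i\le\Lambda(t)$ gives $\int_0^t f(s)\,dD_s=I(\Lambda(t))$, where $I(s):=\int_0^s g(u)\,d\tilde D_u$ and $g:=f\circ\Lambda^{-1}$ is regularly varying with exponent $\beta$; the change of variables $u=\Lambda(v)$, $du=\kappa_v\,dv$ identifies the compensator as $\int_0^t f(s)\kappa_s\,ds=G(\Lambda(t))$ with $G(s):=\int_0^s g$. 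Since $\tilde D$ has unit rate and $g\ge0$ is deterministic, I record $\E[I(s)]=G(s)$ and $\mathrm{Var}(I(s))=V(s):=\int_0^s g(u)^2\,du$, where $g^2$ is regularly varying with exponent $2\beta$. Everything then follows by comparing $I(s)$ with $G(s)$ as $s\to\infty$ (the relevant regime exactly when $\Lambda(\infty)=\infty$) and applying Lemma~\ref{lem:reg-var}.

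For part~1, if $\Lambda(\infty)<\infty$ the process $D$ has finitely many points a.s., so $\int_0^t f\,dD_s$ is a finite non-decreasing sum and converges. If instead $\beta<-1$, Lemma~\ref{lem:reg-var}.1 gives $G(\infty)<\infty$; since $I$ is non-decreasing, monotone convergence yields $\E[\lim_s I(s)]=\lim_s G(s)=G(\infty)<\infty$, so $\lim_s I(s)$ is finite a.s., hence so is $\lim_t\int_0^t f\,dD_s=\lim_s I(s)$. For part~3 ($\beta=-1$), $g^2$ has exponent $-2<-1$, so $V(\infty)<\infty$ by Lemma~\ref{lem:reg-var}.1; thus the compensated integral $M_s:=I(s)-G(s)$ is an $\mathcal L^2$-bounded martingale and converges a.s.\ to a finite limit. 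As $G\in\Omega(1)$ (again Lemma~\ref{lem:reg-var}.1) is bounded below by a positive constant and $\Lambda(t)\to\infty$, the ratio $I(\Lambda(t))/G(\Lambda(t))=1+M_{\Lambda(t)}/G(\Lambda(t))$ stays a.s.\ bounded between two positive constants for large $t$, which is even stronger than the claimed $O(t^\alpha)\cap\Omega(t^{-\alpha})$.

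The substantial work is part~2 ($\beta>-1$, $\Lambda(\infty)=\infty$), where $G(s)\to\infty$. The $\mathcal L^2$-statement is immediate: $\E[(I(s)/G(s)-1)^2]=V(s)/G(s)^2$, and a short exponent computation (splitting at $\beta=-\tfrac12$, where $2\beta+1$ changes sign) shows $V/G^2$ is regularly varying with a strictly negative exponent, hence tends to $0$ by Lemma~\ref{lem:reg-var}.4. The almost-sure statement is the crux, and I would obtain it by a subsequence argument. I choose $s_n=n^a$ so that $s_{n+1}/s_n\to1$, taking $a$ large enough that $\sum_n V(s_n)/G(s_n)^2<\infty$; this is possible because $V/G^2$ decays like a negative power times a slowly varying factor. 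Chebyshev's inequality and Borel--Cantelli then give $I(s_n)/G(s_n)\to1$ a.s. To pass from the subsequence to the full limit I use that both $I$ and $G$ are non-decreasing, so for $s\in[s_n,s_{n+1}]$,
\[
  \frac{I(s_n)}{G(s_n)}\,\frac{G(s_n)}{G(s_{n+1})}
  \;\le\; \frac{I(s)}{G(s)}
  \;\le\; \frac{I(s_{n+1})}{G(s_{n+1})}\,\frac{G(s_{n+1})}{G(s_n)},
\]
and Lemma~\ref{lem:reg-var}.2 (applicable since $s_{n+1}/s_n\to1$) forces $G(s_{n+1})/G(s_n)\to1$, squeezing $I(s)/G(s)\to1$. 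Substituting $s=\Lambda(t)\to\infty$ returns $\int_0^t f\,dD_s\sim\int_0^t f\kappa_s\,ds$ a.s.\ and in $\mathcal L^2$. The main obstacle is precisely this gap-filling: the subsequence must be sparse enough for summable Borel--Cantelli errors yet satisfy $s_{n+1}/s_n\to1$, and reconciling these two requirements is what makes the regularly varying structure, through Lemma~\ref{lem:reg-var}.2, indispensable.
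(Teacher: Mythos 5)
Your proof is correct, and it reaches the two hard conclusions by genuinely different arguments than the paper, even though the initial reduction to a unit-rate Poisson process and the second-moment identity ($\E[I(s)]=G(s)$, $\mathrm{Var}(I(s))=\int_0^sg(u)^2du$, hence $\|I(s)/G(s)-1\|^2_{\mathcal L^2}=V(s)/G(s)^2$) coincide. For the almost-sure statement in \emph{2.}, the paper works along the random jump times: it exploits that $Y_t$ is piecewise monotone between jumps, reduces the problem to $Y_{\tau_n}\to1$, and evaluates that limit via the strong law $\tau_n\sim n$, the Karamata representation (Lemma \ref{lem:reg-var}.\emph3) and a Riemann-sum computation after splitting the sum at $\min\{\sqrt n,\sqrt{F(n)}\}$. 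You instead run the classical method of subsequences: Chebyshev plus Borel--Cantelli along the deterministic grid $s_n=n^a$, then the monotone sandwich closed by Lemma \ref{lem:reg-var}.\emph2. The two requirements on $(s_n)$ are indeed compatible for every $\beta>-1$, since $V/G^2$ is regularly varying with exponent $-\min\{1,2(\beta+1)\}<0$, so any sufficiently large $a$ makes $\sum_nV(s_n)/G(s_n)^2$ finite while $s_{n+1}/s_n\to1$ automatically. Your route is shorter and avoids both Lemma \ref{lem:reg-var}.\emph3 and the law of large numbers for the jump times; the paper's route avoids the tuning parameter $a$ and identifies the limit directly as a Riemann sum along the actual jumps. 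In \emph{3.} you use that $M_s=I(s)-G(s)$ is an $\mathcal L^2$-bounded martingale (as $2\beta=-2<-1$), which gives the strictly stronger conclusion that the ratio is eventually bounded between positive constants, whereas the paper only compares with the integrals of $s^{\pm\alpha}f(s)$ to obtain the polynomial bounds. Two harmless gaps to note: the positive lower bound in \emph{3.} when $G(\infty)<\infty$ needs the observation that $I(\infty)=\sum_ig(\tilde U_i)>0$ almost surely (true, since there are infinitely many jumps and $g>0$ eventually), and, like the paper, you tacitly assume $\Lambda(\infty)=\infty$ in \emph{3.}
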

\begin{proof}
  First note that there is a unit-rate Poisson process, which we denote by $(P_t)$
  and its jump times by $(\sigma_k)_k$, such that $D_t=P_{\Lambda(t)}$ for all
  $t\geq0$ and the jump times of $(D_t)$ satisfy $\tau_k=\Lambda^{-1}(\sigma_k)$.
  Then, supposing that \emph{2.} holds for $\kappa\equiv1$, the general case follows as
  \begin{align*}
    \int_0^tf(s)dD_s
     &= \sum_{k=1}^{D_t}f(\tau_k)
      = \sum_{k=1}^{P_{\Lambda(t)}}f(\Lambda^{-1}(\sigma_k))
      \sim
        \int_0^{\Lambda(t)}f(\Lambda^{-1}(s))ds
      = \int_0^tf(s)\kappa_sds.
  \end{align*}
  Thus, without loss of generality, let $\kappa\equiv1$, $(D_t)=(P_t)$ and $f$
  regularly varying with exponent $\beta>-1$. Letting $F(t)=\int_0^tf(x)dx$, it
  remains to be shown that
  \begin{align*}
    Y_t
     &:= \frac1{F(t)}\sum_{k=1}^{D_t}f(\tau_k)
      \xrightarrow{t\to\infty}1
  \end{align*}
  almost surely and in $\mathcal L^2$. Starting with the $\mathcal L^2$-convergence,
  we recall that on the event $\{D_t=n\}$, the jump times $(\tau_1,\ldots,\tau_n)$ are
  equal in distribution to $(U^t_{(1)},U^t_{(2)},\ldots,U^t_{(n)})$, the order
  statistic of $n$ iid variables $(U^t_1,\ldots,U^t_n)$, uniformly distributed on
  $[0,t]$. We obtain
  \begin{align}
    \E\Big[\sum_{k=1}^{D_t}f(\tau_k)\Big]
     &= \E\Bigg[\sum_{k=1}^{D_t}\E\Big[f(\tau_k)\Big|D_t\Big]\Bigg]
      = \E\Bigg[\sum_{k=1}^{D_t}\E\Big[f(U^t_k)\Big|D_t\Big]\Bigg]\notag \\[1em]
     &= t\E[f(U^t_1)]
      = t\cdot\frac1t\int_0^tf(s)ds
      = F(t).
      \label{eq:expectation-D-integral}
  \end{align}
  Hence, $\E[Y_t]=1$ and we compute
  \begin{align*}
    \|Y_t-1\|_{\mathcal L^2}
     &= \text{Var}\Bigg[\frac1{F(t)}\sum_{k=1}^{D_t}f(\tau_k)\Bigg]
      = \frac1{F(t)^2}\Bigg(\text{Var}\Bigg[\sum_{k=1}^{D_t}\E\Big[f(U^t_k)\Big|D_t\Big]\Bigg]
          + \E\Bigg[\sum_{k=1}^{D_t}\text{Var}\Big[f(U^t_k)\Big|D_t\Big]\Bigg]\Bigg)\\[1em]
     &= \frac1{F(t)^2}\Big(\text{Var}\Big[D_t\E[f(U^t_1)]\Big]
          + \E\Big[D_t\text{Var}[f(U^t_1)]\Big]\Big)
      = \frac1{F(t)^2}\Big(t\E[f(U^t_1)]^2 + t\text{Var}[f(U^t_1)]\Big)\\[1em]
     &= \frac{t\E[f(U^t_1)^2]}{F(t)^2}
      = \frac{\int_0^tf(x)^2dx}{\big(\int_0^tf(x)dx\big)^2}.
  \end{align*}
  Since $f^2$ is regularly varying with exponent $2\beta$, we obtain from
  Lemma \ref{lem:reg-var}.\emph1 and \ref{lem:reg-var}.\emph4 for
  \begin{itemize}
    \item
      $\beta>-\frac12$, that $\|Y_t-1\|_{\mathcal L^2}\sim\frac1t\cdot\frac{(\beta+1)^2}{2\beta+1}$.
    \item
      $\beta=-\frac12$, some slowly varying function $\ell$ and arbitrary $\varepsilon>0$ that
      \begin{align*}
        \|Y_t-1\|_{\mathcal L^2}
          = \frac{\ell(t)}t
          \in O(t^{-1+\varepsilon}).
      \end{align*}
    \item
      $\beta\in(-1,-\frac12)$, the numerator converges to a constant and the denominator
      converges to $\infty$.
  \end{itemize}
  Either way, the $\mathcal L^2$ convergence follows.\\
  For the almost sure convergence first note that there is a
  subsequence $(t_n)_n$ with $t_n\nearrow\infty$ as well as
  $\lim_{n\to\infty}Y_{t_n}=1$ and hence,
  $\liminf_tY_t\leq1\leq\limsup_tY_t$ almost surely. Noting that
  $(Y_t)$ is a piecewise deterministic process, jumping upwards and
  between jumps decreasing continuously, the maximum and minimum on
  the $n$th deterministic piece of the path respectively are given by
  \begin{align*}
    Y^+_n
      := Y_{\tau_n}
       = \frac1{F(\tau_n)}\sum\limits_{k=1}^nf(\tau_k)\quad\text{and}\quad
    Y^-_n
      := Y_{\tau_{n+1}-}
       = \frac1{F(\tau_{n+1})}\sum\limits_{k=1}^nf(\tau_k)
  \end{align*}
  and we obtain for every $t$ that $Y^-_{D_t}\leq Y_t\leq Y^+_{D_t}$. Also, we deduce
  from Lemma \ref{lem:reg-var}.\emph2 that
  \begin{align*}
    \frac{Y^+_n}{Y^-_n}
     &= \frac{F(\tau_{n+1})}{F(\tau_n)}
      \xrightarrow{n\to\infty}1
  \end{align*}
  almost surely, considering that $\tau_{n+1}/\tau_n\xrightarrow{n\to\infty}1$. Since
  the values of the local extrema of $(Y_t)_t$ are given by $Y^+$ and $Y^-$, it follows
  that
  \begin{align*}
    \liminf_{n\to\infty}Y^+_n
     &= \liminf_{n\to\infty}Y^-_n
      = \liminf_{t\to\infty}Y_t
      \leq 1
      \leq \limsup_{t\to\infty}Y_t
      = \limsup_{n\to\infty}Y^+_n.
  \end{align*}
  Hence, it suffices to show that $Y^+_n\xrightarrow{n\to\infty}1$ almost surely. For this,
  let $h(n):=\min\{\sqrt n,\sqrt{F(n)}\}$ and decompose $Y^+_n$ in the following way:
  \begin{align}
    Y^+_n
     &= \frac1{F(\tau_n)}\sum_{k\leq h(n)}f(\tau_k)
       + \frac1{F(\tau_n)}\sum_{h(n)<k\leq n}\frac1{f(\tau_k)}.
       \label{eq:Yn+decomposition}
  \end{align}
  Considering that $A^-:=\inf_{n\geq1}\frac{\tau_n}n>0$ and $A^+:=\sup_{n\geq1}\frac{\tau_n}n<\infty$
  almost surely, by Lemma \ref{lem:reg-var} we obtain for the first part
  \begin{align*}
    \frac1{F(\tau_n)}\sum_{k\leq h(n)}f(\tau_k)
    \ &\sim\ \frac{\beta+1}n\sum_{k\leq h(n)}\Big(\frac{\tau_k}{\tau_n}\Big)^\beta
                \cdot\frac{a(\tau_k)}{a(\tau_n)}
                \cdot\exp\Big(\int_{\tau_k}^{\tau_n}\frac{\varepsilon(y)}ydy\Big)\\[1em]
      &\leq \frac Cn\cdot\Big(\frac{A^+}{A^-}\Big)^{|\beta|}
                \exp\Big(\int_0^{nA^+}\frac{|\varepsilon(y)|}ydy\Big)
                \cdot\sum_{k\leq h(n)}\Big(\frac kn\Big)^\beta,
  \end{align*}
  where the constant $C$ arises from the boundedness of $a$. Now, for $\beta\geq0$ the
  remaining sum is bounded above by $h(n)\leq\sqrt n$, while for $-1<\beta<0$ it holds
  for some slowly varying function $\ell$ that
  \[
    \sum_{k\leq h(n)}\Big(\frac kn\Big)^\beta
      \leq \sum_{k\leq h(n)}n^{|\beta|}\leq h(n)n^{|\beta|}
      = n^{|\beta|+\frac{1+\beta}2}\ell(n)
      = n^{\frac{1+|\beta|}2}\ell(n).
  \]
  Thus, noting that $\frac{1+|\beta|}2<1$ and $\exp(\int_0^\bullet\frac{|\varepsilon(y)|}ydy)$
  is slowly varying, it follows that
  \[
    \frac1{F(\tau_n)}\sum_{k\leq h(n)}f(\tau_k)
      \xrightarrow{n\to\infty}0
  \]
  almost surely. Hence, since $\tau_n\sim n$ almost surely and thus $F(n)\sim F(\tau_n)$
  and $f(\tau_k)\sim f(k)$ by Lemma \ref{lem:reg-var}.\emph2, it follows from
  \eqref{eq:Yn+decomposition} and Lemma \ref{lem:reg-var}.\emph{1(a)}
  \begin{align*}
    Y^+_n
    \ &\sim\ 
    \frac{F(n)}{F(\tau_n)}\cdot\frac1{F(n)}\sum_{h(n)<k\leq n}\frac{f(\tau_k)}{f(k)}\cdot f(k)
     \ \sim\ \frac{\beta+1}n\sum_{h(n)<k\leq n}\frac{f(k)}{f(n)}\\[1em]
    \ &\sim\ \frac{\beta+1}n\sum_{h(n)<k\leq n}\Big(\frac kn\Big)^\beta
     \ \sim\ (\beta+1)\int_{h(n)/n}^1x^\beta dx
      \xrightarrow{n\to\infty}1
  \end{align*}
  and the proof of \emph{2.} is done.\\[0.5em]
  For \emph{1.} if $\Lambda(\infty)<\infty$, also
  $
    \lim_{t\to\infty}\int_0^tf(s)dD_s
      = \sum_{k=1}^{P_{\Lambda(\infty)}}f(\tau_k)
  $ is almost surely finite. Otherwise, we obtain from \eqref{eq:expectation-D-integral}
  and Lemma \ref{lem:reg-var}.\emph1 that
  \begin{align*}
    \E\Big[\int_0^tf(s)dD_s\Big]
     &= \int_0^tf(s)\kappa_sds
      \leq \int_0^\infty f(s)\kappa_sds
      = \int_0^\infty f(\Lambda^{-1}(s))ds
      < \infty,
  \end{align*}
  which also, by monotone convergence, implies the finiteness of $\int_0^\infty f(s)dD_s$
  and \emph{1.} is done.\\[.5em]
  Lastly, for \emph{3.} we conclude that for $\alpha>0$, $F(t):=\int_0^tf(s)\kappa_sds$
  and $Y_t:=\int_0^tf(s)dD_s/F(t)$
  \begin{align*}
    0
     &\leq t^{-\alpha}Y_t
      \leq \frac1{F(t)}\int_0^ts^{-\alpha}f(s)dD_s.
  \end{align*}
  Now, since $t\mapsto t^{-\alpha}f(t)$ is regularly varying with exponent $-1-\alpha<-1$,
  \emph{1.} shows that the integral almost surely converges to some finite limit and hence,
  considering that $F$ is non-decreasing and non-negative, $\limsup_t t^{-\alpha}Y_t<\infty$
  almost surely and $Y_t\in O(t^\alpha)$. Similarly, using \emph{2.}
  \begin{align*}
    t^\alpha Y_t
     &\geq \frac1{F(t)}\int_0^ts^\alpha f(s)dD_s,
  \end{align*}
  which either converges to a positive constant, if $\Lambda(\infty)<\infty$, or is
  asymptotically equivalent to
  \begin{align*}
    \frac{\int_0^ts^\alpha f(s)\kappa_sds}{\int_0^tf(s)\kappa_sds}
     &\sim \frac{\int_1^ts^\alpha f(s)\kappa_sds}{\int_1^tf(s)\kappa_sds}
      \geq 1.
  \end{align*}
  Either way, it follows that $\limsup_t t^\alpha Y_t>0$ and thus $Y_t\in\Omega(t^{-\alpha})$.
\end{proof}

\begin{remark}[More precise asymptotics for $\beta=-1$]
  In the case $\beta=-1$ it follows from Lemma \ref{lem:reg-var} that
  $F$ is slowly varying and thus, considering its monotonicity, lies
  in $O(t^\varepsilon)\cap\Omega(1)$ for all $\varepsilon>0$. As
  discussed in \cite{Polfeldt1969} however, it is not always the case,
  that a regularly varying function with exponent $-1$ is integrable
  on $\R_+$. Supposing that $F(\infty)=\infty$, we obtain the
  $\mathcal L^2$-convergence in Theorem \ref{thm:asymptotics-of-D-integral}
  analogously to the case $\beta\in(-1,-\frac12)$, while the methods we
  used to obtain almost sure convergence fail for $\beta=-1$. Conversely,
  if $F(\infty)<\infty$, similarly to the proof of Theorem
  \ref{thm:asymptotics-of-D-integral}.\emph1 it follows that
  \begin{align*}
    \lim_{t\to\infty}\E\Big[\int_0^tf(s)dD_s\Big]
      < \infty
  \end{align*}
  and the integral has a finite almost sure limit. Surely, \cite{Polfeldt1969}
  can be used to specify the results for this critical case.
\end{remark}

\subsection{Proof of Theorem \ref{thm:inhom-bdp}}\label{sec:pr-thm3}

In this section we generalise the findings of Corollary \ref{cor:results-homog-bd-proc} to
the time-inhomogeneous case. Recalling Definition \ref{def:inhom-bd-w-dis}, let
$\cZ=\cZ^{in}_{b,d,\kappa,p}$ with birth, death and disaster rate functions $b$, $d$ and
$\kappa$ respectively, and $p: \R_+\to[0,1]$ the survival probability function.
Furthermore, let $(D_t)_t$ be the inhomogeneous Poisson process with rate $\kappa$ that
counts the disasters up to time $t$.
In what follows we will always assume $b,d$ and $\kappa$ to be right continuous with
left limits and $p$ to be left continuous with right limits.\\[0.5em]
We start by computing the pgf of $\mathcal Z$ for $(1-p)\kappa\equiv0$, i.e. without disasters,
which will be generalised in Lemma \ref{lem:inhom-duality}.

\begin{lemma}\label{lem:inhom-dual-no.dis.}
  Let $v(t):=\int_0^t(b_y-d_y)dy$ and $(1-p)\kappa\equiv0$. Then, for $x\in[0,1]$,
  $t\geq t_0\geq0$ and $k\geq0$ it holds that
  \begin{align*}
    \E[(1-x)^{Z_t}|Z_{t_0}=k]
     &= (1-s(t,x))^k,
  \end{align*}
  where $s(t,0)=0$ for all $t$ and
  $\displaystyle
    s(t,x)^{-1}
      = \tfrac1xe^{v(t_0)-v(t)} + e^{v(t_0)}\int_{t_0}^tb_ye^{-v(y)}dy
  $ for $x>0$.
\end{lemma}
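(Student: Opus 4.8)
The plan is to exhibit $s(t,\cdot)$ as the flow of the deterministic dual of $\cZ$ and then to solve the resulting ordinary differential equation in closed form. Since $(1-p)\kappa\equiv0$, the process $\cZ$ carries the pure birth-death generator from Definition~\ref{def:inhom-bd-w-dis}, and I would use the same duality function $H(x,z)=(1-x)^z$ as in Lemma~\ref{lem:arb-off-dualilty}. A direct computation gives $\cG_{\cZ,t}(u^{\,\cdot})(z)=z\,u^{z-1}(u-1)(b_tu-d_t)$, which is the birth-death analogue of the generator identity established there. Writing $u=1-r$, this identifies the dual as the (time-inhomogeneous) deterministic $1$-jump flow with drift $\alpha_\tau(r)=(b_\tau-d_\tau)r-b_\tau r^2$.

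Concretely, for a fixed terminal time $t$ and terminal value $x$ I would let $r=(r_\tau)_{\tau\in[t_0,t]}$ solve the Riccati equation $\dot r_\tau=-(b_\tau-d_\tau)r_\tau+b_\tau r_\tau^2$ with terminal condition $r_t=x$, and set $s(t,x):=r_{t_0}$. The identity $\E[(1-x)^{Z_t}\mid Z_\tau=k]=(1-r_\tau)^k$ can then be verified through the backward Kolmogorov equation: differentiating the right-hand side in $\tau$ produces $k(1-r_\tau)^{k-1}(-\dot r_\tau)$, while applying $-\cG_{\cZ,\tau}$ to $k\mapsto(1-r_\tau)^k$ produces $k(1-r_\tau)^{k-1}r_\tau(b_\tau(1-r_\tau)-d_\tau)$, and the two coincide precisely because $r$ solves the stated equation; at $\tau=t$ both sides equal $(1-x)^k$. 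Uniqueness for the backward equation then yields the factorisation $(1-s(t,x))^k$ at $\tau=t_0$. The factorisation across the $k$ initial individuals is of course also immediate from the branching property, so one could alternatively reduce to $k=1$ and solve the first-order pgf PDE by characteristics.

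To obtain the explicit formula I would linearise the Riccati equation by the substitution $w_\tau:=1/r_\tau$, turning it into the linear equation $\dot w_\tau=(b_\tau-d_\tau)w_\tau-b_\tau$. Multiplying by the integrating factor $e^{-v(\tau)}$ with $v(\tau)=\int_0^\tau(b_y-d_y)\,dy$ and integrating from $t_0$ to $t$ gives $e^{-v(t_0)}/s(t,x)=e^{-v(t)}/x+\int_{t_0}^t b_y e^{-v(y)}\,dy$, which rearranges to the asserted expression for $s(t,x)^{-1}$; the boundary value $s(t,0)=0$ is forced by $H(0,z)=1$, equivalently $w_\tau\to\infty$. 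The main technical care is regularity: since $b,d$ are only right-continuous with left limits, the equations above should be read in the Carath\'eodory (almost-everywhere) sense, so I would record that $v$ is absolutely continuous and $w$ is its unique absolutely continuous solution, and check that $r_\tau$ stays in $[0,1]$ along the flow so that $s(t,x)\in[0,1]$ and the probability-generating-function interpretation is legitimate. I expect this regularity bookkeeping, rather than the algebra, to be the only real obstacle.
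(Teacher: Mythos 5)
Your proof is correct, but it takes a genuinely different route from the paper's. The paper reduces to $k=1$ by the branching property and then simply quotes Kendall (1948) for the explicit probability generating function of the inhomogeneous birth-death process, rewrites that formula, and handles general $t_0$ by a time-shift of the rate functions $b^\ast(s)=b_{t_0+s}$, $d^\ast(s)=d_{t_0+s}$. You instead re-derive the formula from scratch: applying the generator to $z\mapsto(1-r)^z$ identifies the deterministic dual drift $\alpha_\tau(r)=(b_\tau-d_\tau)r-b_\tau r^2$ (your algebra here checks out), the backward Kolmogorov / martingale verification of $\E[(1-x)^{Z_t}\mid Z_\tau=k]=(1-r_\tau)^k$ is exactly the computation the paper performs in Lemma \ref{lem:arb-off-dualilty} for the homogeneous case, and the substitution $w=1/r$ with integrating factor $e^{-v(\tau)}$ integrated over $[t_0,t]$ yields the stated expression for $s(t,x)^{-1}$ directly, without any time-shift. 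What your approach buys is self-containedness (no external citation) and a conceptual link to the $p$-jump duality that organises the rest of the paper; it also makes transparent \emph{why} the reciprocal of $s$ is the natural object. What the paper's approach buys is brevity and the avoidance of the two points you correctly flag as needing care: (i) justifying the backward equation for an unbounded generator --- here the clean fix is to note that $(1-r_\tau)^{Z_\tau}$ is a bounded local martingale, hence a true martingale, using non-explosion of the linear birth-death process; and (ii) checking that the backward flow stays in $(0,1]$, which follows from observing that $\phi(t):=e^{v(t_0)-v(t)}+e^{v(t_0)}\int_{t_0}^t b_ye^{-v(y)}dy$ satisfies $\phi(t_0)=1$ and $\phi'(t)=d_t\,e^{v(t_0)-v(t)}\geq0$, so $s(t,x)^{-1}\geq\phi(t)\geq1$ for $x\in(0,1]$. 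With those two remarks supplied, your argument is complete.
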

\begin{proof}
  Given that $Z_{t_0}=k$, $Z_t$ is equal in distribution to a sum of $k$ independent copies
  started in 1 at time $t_0$. Thus, $\E[(1-x)^{Z_t}|Z_{t_0}=k]=\E[(1-x)^{Z_t}|Z_{t_0}=1]^k$.
  Hence, without loss of generality we assume $k=1$.
  Now, considering \cite{Kendall1948}, where birth- and death-rate are denoted by $\lambda$
  and $\mu$ respectively and $v$ is denoted by $-\rho$ (cf. (11)), by $(9),(12)$ and $(10b)$
  we can compute for $z\in[0,1]$ that
  \begin{align*}
    \E[z^{Z_t}|Z_0=1]
     &=: \varphi(z,t)
      = \frac{1+e^{v(t)}\int_0^tb_se^{-v(s)}ds-e^{v(t)} + \Big(e^{v(t)}-e^{v(t)}\int_0^tb_se^{-v(s)}ds\Big)z}
             {1 + \int_0^tb_se^{-v(s)}ds - \int_0^tb_se^{-v(s)}ds\cdot z}\\[1em]
     &= 1 - \frac{e^{v(t)} - e^{v(t)}z}{1 + \int_0^tb_se^{-v(s)}ds\cdot(1-z)}.
  \end{align*}
  Substitution of $z=1-x$ and reducing the fraction by $xe^{v(t)}$ concludes the proof for
  $t_0=0$.
  The general case $t_0\geq0$ is obtained considering a process $\cZ^\ast$ with birth and
  death rates at time $s$ given by $b^\ast(s):=b_{t_0+s}$ and $d^\ast:=d_{t_0+s}$
  respectively. Then, for $t\geq t_0$
  \begin{align}
    \E[(1-x)^{Z_t}|Z_{t_0}=1]
     &= \E[(1-x)^{Z^\ast_{t-t_0}}|Z^\ast_0=1]
      = 1 - \frac1{\frac1xe^{-v^\ast_{t-t_0}} + \int_0^{t-t_0}b^\ast(s)e^{-v^\ast_s}ds},
      \label{eq:lem:inhom-no.dis.1}
\intertext{where}
    v^\ast_s
     &= \int_0^s\big(b^\ast(y)-d^\ast(y)\big)dy
      = \int_{t_0}^{t_0+s}\big(b_y-d_y\big)dy
      = v(t_0+s)-v(t_0).
      \label{eq:lem:inhom-no.dis.2}
  \end{align}
  Substituting $y:=s+t_0$ in \eqref{eq:lem:inhom-no.dis.1} and using \eqref{eq:lem:inhom-no.dis.2}
  concludes the proof.
\end{proof}

\noindent
The following lemma generalises the result above to processes with disasters, i.e.
$(1-p)\kappa\not\equiv0$. It delivers a dual process $\cX$ with respect to the pgf and
thus corresponds to Lemma \ref{lem:arb-off-dualilty} in the proof of Theorem
\ref{thm:bp-arb.off.dis}.

\begin{lemma}[A stronger duality]\label{lem:inhom-duality}
  Let $\log\frac10=-\infty$, $1/0=\infty$ and $1/\infty=0$. Then, for $x\in[0,1]$, $k\geq0$
  and $\cD_\infty:=\sigma(D_s;s\geq0)$, it holds that
  \begin{align*}
    \E_k[(1-x)^{Z_t}|\cD_\infty]
     &= (1-X_t)^k
  \end{align*}
  for a piecewise deterministic process $\cX=(X_t)_{t\geq0}$ given by
  \begin{align}
    X_t^{-1}
     &= \tfrac1xe^{-L_t}
       + \int_0^te^{-L_s}b_sds,
     \label{eq:inhom-X-explicit}
  \end{align}
  where
  $\displaystyle
    L_t = \int_0^t\big(b_s-d_s\big)ds - \int_0^t\log\Big(\frac1{p_s}\Big)dD_s
  $.
\end{lemma}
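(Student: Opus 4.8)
The plan is to prove the quenched duality by conditioning on the disaster times and iterating two elementary building blocks. Since the disaster clock $(D_t)$ is independent of the branching dynamics, conditioning on $\cD_\infty$ freezes the disaster epochs $0<\tau_1<\tau_2<\cdots$, and on every open interval $(\tau_i,\tau_{i+1})$ free of disasters the conditioned process $\cZ$ is an ordinary inhomogeneous birth--death process with rates $b,d$. Moreover, disasters after time $t$ do not affect $Z_t$, so it suffices to work with the finitely many $\tau_i\le t$. First I would reduce everything to these inter-disaster pieces, on each of which Lemma~\ref{lem:inhom-dual-no.dis.} applies verbatim.

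Next I would record the effect of a single disaster at time $\tau$ on the dual variable. If $Z_{\tau-}=m$ then, by Definition~\ref{def:inhom-bd-w-dis}, $Z_\tau$ is $\mathrm{Bin}(m,p_\tau)$, whence for any $\xi$
\[
  \E\big[(1-\xi)^{Z_\tau}\,\big|\,Z_{\tau-}=m\big]
    = \big(p_\tau(1-\xi)+(1-p_\tau)\big)^m
    = (1-p_\tau\xi)^m,
\]
so a disaster sends the dual variable $\xi\mapsto p_\tau\xi$. The key structural observation is that both building blocks are affine in the reciprocal coordinate $u=1/\xi$: the birth--death propagation of Lemma~\ref{lem:inhom-dual-no.dis.} over an interval $[a,b]$ acts by $u\mapsto e^{v(a)-v(b)}u+e^{v(a)}\int_a^b b_y e^{-v(y)}\,dy$, while the disaster acts by $u\mapsto u/p_\tau$. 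Composing affine maps is straightforward, which is what makes the explicit formula tractable.

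Then I would compose. Using the Markov property (the tower rule) at each $\tau_i$, I would start from the test value $x$ carried at time $t$ and propagate it through the alternating sequence of inter-disaster intervals and disaster jumps down to time $0$, yielding $X_t^{-1}$ as a single affine function of $1/x$. Writing $p_i:=p_{\tau_i}$ and $\tau_{n+1}:=t$, the coefficient of $1/x$ collects all interval factors and all thinning factors into $e^{-v(t)}\prod_{i=1}^n p_i^{-1}$, while the contribution of the $i$-th interval is $\big(\prod_{j=1}^i p_j^{-1}\big)\int_{\tau_i}^{\tau_{i+1}}b_s e^{-v(s)}\,ds$. The crux is the identity
\[
  e^{-L_s}
    = e^{-v(s)}\!\!\prod_{k:\,\tau_k\le s}\!\! p_k^{-1},
\]
immediate from $L_s=v(s)-\int_0^s\log\tfrac1{p_u}\,dD_u$, which shows at once that the coefficient of $1/x$ equals $e^{-L_t}$ and that $\sum_{i=0}^n\big(\prod_{j\le i}p_j^{-1}\big)\int_{\tau_i}^{\tau_{i+1}}b_s e^{-v(s)}\,ds=\int_0^t e^{-L_s}b_s\,ds$, giving \eqref{eq:inhom-X-explicit}. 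Cleanly, this identification can be packaged as an induction on $n=D_t$, whose base case $n=0$ (where $L_t=v(t)$) is exactly Lemma~\ref{lem:inhom-dual-no.dis.}.

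I expect the main difficulty to be organisational rather than conceptual: through the nested conditioning one must track precisely which accumulated product of survival probabilities weights each sub-integral and verify that these products assemble into the single factor $e^{-L_s}$. A secondary point needing care is the degenerate values handled by the stated conventions $\log\tfrac10=-\infty$, $1/0=\infty$, $1/\infty=0$: a terminal disaster with $p_\tau=0$ forces $L_t=-\infty$, hence $X_t^{-1}=\infty$ and $X_t=0$, consistently encoding that such a disaster makes $Z_t=0$ and $\E_k[(1-x)^{Z_t}\mid\cD_\infty]=1$, and the boundary case $x=0$ is handled identically.
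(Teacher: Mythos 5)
Your proposal is correct and follows essentially the same route as the paper: condition on $\cD_\infty$, apply Lemma \ref{lem:inhom-dual-no.dis.} on each inter-disaster interval, use the binomial identity $\E[(1-\xi)^{Z_\tau}\mid Z_{\tau-}=m]=(1-p_\tau\xi)^m$ at each disaster, and compose the resulting affine maps in the reciprocal coordinate backward from $t$ to $0$, identifying the accumulated products via $e^{-L_s}=e^{-v(s)}\prod_{k:\tau_k\le s}p_{\tau_k}^{-1}$. The paper writes this composition out as an explicit backward recursion $s_{D_t},s_{D_t-1},\dots,s_0$ rather than as an induction on $D_t$, but the content is identical.
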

\begin{proof}
  Let $t$ be fixed, $G_t(x):=\E_k[(1-x)^{Z_t}|\cD_\infty]$,
  $\tau_0:=0$ and $\tau_1,\tau_2,\ldots$ be the jump times of $(D_t)$,
  i.e. the disaster times of $\cZ$. Note that the binomial disasters
  provide (with the left-side limit
  $Z_{\tau_n-} := \lim_{s\uparrow \tau_n} Z_s$), {\small
  \begin{align*}
    \E[(1-x)^{Z_{\tau_n}}|Z_{\tau_n-}=z] 
     &= \sum_{\ell=0}^z\genfrac(){0pt}{1}z\ell p_{\tau_n}^\ell(1-p_{\tau_n})^{z-\ell}(1-x)^\ell
      = \big(p_{\tau_n}(1-x) + (1-p_{\tau_n})\big)^z
      = (1-p_{\tau_n}x)^z.
  \end{align*}
  }
  \noindent
  Iterating this and Lemma \ref{lem:inhom-dual-no.dis.}, we obtain\\[1.5em]
  $\displaystyle
     G_t(x)
       = \E\Big[\E\big[(1-x)^{Z_t}\big|Z_{\tau_{D_t}},\cD_\infty\big]\Big|\cD_\infty\Big]
       = \E[(1-s_{D_t})^{Z_{\tau_{D_t}}}|\cD_\infty]
  $
  \begin{align*}
     &= G_{\tau_{D_t}}(s_{D_t})
       &\text{with }
         s_{D_t}^{-1}
          &= \frac{e^{v(\tau_{D_t})-v(t)}}x+e^{v(\tau_{D_t})}\int\limits_{\tau_{D_t}}^tb_se^{-v(s)}ds\\[1em]
     &= G_{\tau_{D_t}-}(p_{\tau_{D_t}}s_{D_t})=\ldots\\[1em]
     &= G_{\tau_{D_t-1}}(s_{D_t-1})
       &\text{with }
         s_{D_t-1}^{-1}
          &= \frac{e^{v(\tau_{D_t-1})-v(\tau_{D_t})}}{p_{\tau_{D_t}}s_{D_t}}
            + e^{v(\tau_{D_t-1})}\int\limits_{\tau_{D_t-1}}^{\tau_{D_t}}b_se^{-v(s)}ds\\[1em]
     &= G_{\tau_{D_t-2}}(s_{D_t-2})
       &\text{with }
         s_{D_t-2}^{-1}
          &= \frac{e^{v(\tau_{D_t-2})-v(\tau_{D_t-1})}}{p_{\tau_{D_t-1}}s_{D_t-1}}
            + e^{v(\tau_{D_t-2})}\int\limits_{\tau_{D_t-2}}^{\tau_{D_t-1}}b_se^{-v(s)}ds\\[1em]
     &=\ldots= G_{\tau_0}(s_0)
       &\text{with }
         s_0
          &= \frac{e^{v(\tau_0)-v(\tau_1)}}{p_{\tau_1}s_1}
            + e^{v(\tau_0)}\int\limits_{\tau_0}^{\tau_1}b_se^{-v(s)}ds\\[1em]
     &= (1-s_0)^k.
  \end{align*}
  Now, solving the recursion,
  \begin{align*}
    s_0^{-1}
     &= \Bigg(\ldots\Bigg(\frac1xe^{v(\tau_{D_t})-v(t)}
       + e^{v(\tau_{D_t})}\int_{\tau_{D_t}}^tb_ye^{-v(y)}dy\Bigg)\\[1em]
     &\qquad\qquad\cdot\frac1{p_{\tau_{D_t}}}e^{v(\tau_{D_t-1})-v(\tau_{D_t})}
       + e^{v(\tau_{D_t-1})}\int_{\tau_{D_t-1}}^{\tau_{D_t}}b_ye^{-v(y)}dy\Bigg)\cdots\Bigg)\\[1em]
     &\qquad\qquad\cdot\frac1{p_{\tau_1}}e^{v(\tau_0)-v(\tau_1)}
       + e^{v(\tau_0)}\int_{\tau_0}^{\tau_1}b_ye^{-v(y)}dy\Bigg)\\[1em]
     &= \frac1xe^{-v(t)}\prod_{k=1}^{D_t}p_{\tau_k}^{-1}
       + \sum_{k=0}^{D_t}\prod_{\ell=1}^{k}p_{\tau_\ell}^{-1} \cdot
           e^{v(\tau_0)}\int_{\tau_k}^{\tau_{k+1}\wedge t}b_ye^{-v(y)}dy,\\[1em]
\intertext{where the empty product equals 1. (Then, for $D_t=0$ and thus $t\leq\tau_1$, one
           obtains the deterministic dual from Lemma \ref{lem:inhom-dual-no.dis.}.) Letting
           $\beta(t):=\int_0^tb_se^{-v(s)}ds$ and considering that $v(\tau_0)=v(0)=0$,
           the sum equates to}
     &\sum_{k=0}^{D_t}\prod_{\ell=1}^kp_{\tau_\ell}^{-1}
        \big(\beta(\tau_{k+1}\wedge t)-\beta(\tau_{k})\big)
      = \int_0^t\prod_{\ell=1}^{D_s}p_{\tau_\ell}^{-1}\cdot b_se^{-v(s)}ds.
  \end{align*}
  With
  \begin{align*}
    \prod_{k=1}^{D_t}p_{\tau_k}^{-1}
     &= \exp\Big(\sum_{s\leq t}\log\tfrac1{p_s}\cdot(D_s-D_{s-})\Big)
      = \exp\Big(\int_0^t\log\tfrac1{p_s}dD_s\Big),
  \end{align*}
  it is simple to deduce that $s_0$ equals $X_t$ from $\eqref{eq:inhom-X-explicit}$
  and the proof is done.
\end{proof}

\begin{remark}\label{rem:lem-inhom-dual}\ 
  \begin{enumerate}
    \item
      This Lemma holds for arbitrary counting processes $(D_t)_{t\geq0}$.
      One might even consider a process with multiple jumps, e.g. $\Pw(\tau_k=0)>0$
      for some $k$.
    \item The process $\cX$ here is not of the form required for
      Corollary \ref{cor:p-jump-concave} or Theorem
      \ref{thm:convergence-$p$-jump-pros}, even if we choose constant
      $b,d,\kappa,p$ to obtain homogeneity: $X_t$ jumps from a state
      $1/(\frac ax + b)$ to $1/(\frac a{px}+b)$, which is not a
      $p$-jump. However, in the homogeneous case, letting
      $b\equiv\vartheta>0$, $d\in\R_+$, $\delta:=b-d>0$ and
      $\kappa\equiv1$, we can see that $X_t$ equates to
      $\overline W_t$, the time-reversal of $W_t$ in
      \eqref{eq:W-explicit} we used in the proof of Theorem
      \ref{thm:convergence-$p$-jump-pros}. Similarly one obtains that
      the (homogeneous) time-reversal $\overline X_t$ has the
      generator, for $f\in \mathcal C^1([0,1])$,
      \begin{align*}
        \cG_{\overline{\cX}}f(x)
         &= \kappa(f(px)-f(x)) + (bx(1-x)-dx)f'(x).
      \end{align*}
    \item
      The relationship between $\cX$ and $\cZ$ can be viewed as a \emph{stronger} duality,
      since the duality relation \eqref{eq:dual}, from the beginning of Section \ref{sec:bps},
      here does not only hold in expectation, but even in conditional expectation. (Taking
      expectation, \eqref{eq:dual} follows.)
  \end{enumerate}
\end{remark}

\noindent
Although we are not able to use Corollary \ref{cor:p-jump-concave} here, from
the previous Lemma we immediately obtain the following

\begin{proposition}\label{prop:inhom-limit}
  Let $L_t$ as in Lemma \ref{lem:inhom-duality} and $I_t:=\int_0^te^{-L_s}b_sds$.
  Supposing that $L_t\xrightarrow{t\to\infty}L\in[-\infty,\infty]$ almost surely
  and letting $I:=\lim_{t\to\infty}I_t$, there are 3 possible outcomes for the limit
  of $\E_k[(1-x)^{Z_t}|\cD_\infty]$:
  \begin{align*}
    \lim_{t\to\infty}\E_k[(1-x)^{Z_t}|\cD_\infty](\omega)
     &= \begin{cases}
          1                    &\text{if }\omega\in\{L=-\infty\}\cup\{I=\infty\}.\\[1em]
          (1-I(\omega)^{-1})^k &\text{if }\omega\in\{L=\infty\}\cap\{I<\infty\}.\\[1em]
          \big(1-\frac x{\exp(-L(\omega))+xI(\omega)}\big)^k
                               &\text{if }\omega\in\{L\in\R\}\cap\{I<\infty\}.
        \end{cases}
  \end{align*}
  The third case occurs if and only if
  \begin{align}
    \int_0^\infty(b_s+d_s)ds<\infty
    \quad\text{and}\quad
    \prod_{k\geq1}p(\tau_k(\omega))>0.
    \label{eq:inhom.condition-dep-on-x}
  \end{align}
\end{proposition}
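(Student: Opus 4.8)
The plan is to read all three limits directly off the explicit representation \eqref{eq:inhom-X-explicit}, and then to spend the real effort on the final equivalence. Fix $x\in(0,1]$ (the value $x=0$ gives the constant $1$ trivially). Since the integrand defining $I_t$ is nonnegative, $I_t$ is nondecreasing, so $I=\lim_t I_t\in[0,\infty]$ always exists, while $L_t\to L$ is assumed; hence $X_t^{-1}=\tfrac1x e^{-L_t}+I_t$ converges in $[1,\infty]$ to $\tfrac1x e^{-L}+I$ under the stated conventions. If $L=-\infty$ or $I=\infty$ the limit is $+\infty$, so $X_t\to0$ and $(1-X_t)^k\to1$; if $L=\infty$ and $I<\infty$ the first summand vanishes and $X_t\to I^{-1}$; and if $L\in\R$ and $I<\infty$ then $X_t\to x/(e^{-L}+xI)$. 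Passing through $(1-X_t)^k$ yields the three displayed expressions. (Here $X_t\le1$ follows for $k=1$ from the duality, so the limits $I^{-1},\,x/(e^{-L}+xI)$ are automatically well defined.) This part is immediate; the substance of the proposition is the final equivalence.

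For that, I would first decompose
\[
L_t=\int_0^t(b_s-d_s)\,ds-S_t,\qquad
S_t:=\int_0^t\log\tfrac1{p_s}\,dD_s=\sum_{\tau_k\le t}\log\tfrac1{p_{\tau_k}}\ge0,
\]
observing that $S_t\nearrow S\in[0,\infty]$ and that $S=-\log\prod_{k\ge1}p(\tau_k)$, so that $\{S<\infty\}$ is exactly the event $\{\prod_{k\ge1}p(\tau_k)>0\}$. The whole equivalence then rests on a two-sided control of $e^{-L_s}$ on the set $\{L\in\R\}$.

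For the implication "$\Leftarrow$", assume $\int_0^\infty(b_s+d_s)\,ds<\infty$ and $S<\infty$. Then $\int_0^t(b_s-d_s)\,ds$ converges, so $L=\int_0^\infty(b_s-d_s)\,ds-S\in\R$, and moreover
\[
e^{-L_s}=e^{-\int_0^s(b_y-d_y)\,dy}\,e^{S_s}\le e^{\int_0^\infty d_y\,dy+S}=:M<\infty
\]
uniformly in $s$, whence $I\le M\int_0^\infty b_s\,ds<\infty$; thus $\omega$ lies in the third case. For "$\Rightarrow$", assume $L\in\R$ and $I<\infty$. Since $e^{-L_s}\to e^{-L}\in(0,\infty)$, there are $s_0$ and $c>0$ with $e^{-L_s}\ge c$ for $s\ge s_0$; together with the local integrability of $b$ (being right-continuous with left limits) this forces $\int_0^\infty b_s\,ds\le \tfrac1c I+\int_0^{s_0}b_s\,ds<\infty$. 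Writing $L_t=\int_0^t b_s\,ds-\bigl(\int_0^t d_s\,ds+S_t\bigr)$ and using that $\int_0^t b_s\,ds$ converges, we deduce that $\int_0^t d_s\,ds+S_t$ converges to a finite limit; as both summands are nondecreasing and nonnegative, each converges separately, giving $\int_0^\infty d_s\,ds<\infty$ and $S<\infty$, i.e.\ $\int_0^\infty(b_s+d_s)\,ds<\infty$ and $\prod_{k\ge1}p(\tau_k)>0$.

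I expect the "$\Rightarrow$" direction to be the only genuine obstacle: trying to read off the conditions from $\{L\in\R\}$ alone fails, since a large growth rate $b$ can be offset by many severe disasters while keeping $L$ finite. The crux is to exploit $I<\infty$ jointly with the strict positivity of the limit $e^{-L}$ to first isolate $\int_0^\infty b_s\,ds<\infty$, after which the monotonicity of $\int_0^t d_s\,ds$ and of $S_t$ lets one split off the two remaining conditions. The two-sided bound on $e^{-L_s}$ — bounded above under the hypotheses of "$\Leftarrow$", bounded below on $\{L\in\R\}$ for "$\Rightarrow$" — is the single idea driving both implications.
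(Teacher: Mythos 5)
Your argument is correct and follows essentially the same route as the paper's proof: read the three limits off the explicit formula $X_t^{-1}=\tfrac1x e^{-L_t}+I_t$ via monotonicity of $I_t$ and the assumed convergence of $L_t$, then prove the equivalence by bounding $e^{-L_s}$ above (for sufficiency) and below away from $0$ (for necessity) on $\{L\in\R\}$, splitting the monotone pieces $\int_0^t d_s\,ds$ and $S_t$ at the end. Your explicit handling of $x=0$ and of the well-definedness of $I^{-1}$ is a small additional care the paper leaves implicit, but the substance is identical.
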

\begin{proof}
  By construction and monotonicity of $(I_t)$, these three cases cover all possible outcomes.
  The results follow by insertion into Lemma \ref{lem:inhom-duality}.
  In case \emph{3.} there is $m(\omega)<\infty$ such that $m(\omega)\geq e^{L_t(\omega)}$
  for all $t$, since $(L_t(\omega))_t$ converges in $\R$. Thus, almost surely
  \begin{align*}
    \int_0^\infty b_sds
     &\leq\int_0^\infty me^{-L_s}b_sds
      = mI
      < \infty.
  \end{align*}
  Now, the convergence of $(L_t(\omega))$ and the non-negativity of $d$ and $\log\frac1p$
  give us that also $\int_0^\infty d_sds$ as well as
  \begin{align*}
    \int_0^\infty\log\tfrac1{p_s}dD_s(\omega)
     &= -\sum_{k\geq1}\log p_{\tau_k(\omega)}
  \end{align*}
  have to be finite, which shows that condition \eqref{eq:inhom.condition-dep-on-x} is
  necessary for the third case. To see the sufficiency, from \eqref{eq:inhom.condition-dep-on-x}
  the finiteness of $L(\omega)$ immediately follows analogously. Then, $(e^{-L_t(\omega)})_t$
  is bounded and thus by finiteness of $\int_0^\infty b_sds$, also $I(\omega)$ has to be finite.
\end{proof}

\begin{remark}\label{rem:prop:inhom-limit}\ 
  \begin{enumerate}
    \item
      The first part of condition \eqref{eq:inhom.condition-dep-on-x} implies that with probability 1
      there is only a finite number of birth and death events, while the second part offers either
      the possibility of $\lim_tD_t<\infty$ or $p$ converging to 1 on the support of $\kappa$,
      fast enough to compensate for $(D_t)$. 
    \item
      Only in the third case, the limiting probability generating function depends on $x$, which
      implies that, as soon as $b$ is bounded away from 0, $\cZ$ either goes extinct or explodes.
    \item
      Since this Proposition provides results depending directly on the paths of $(D_t)$, $b$, $d$
      and $p$, it can easily be applied to random environments in the sense of choosing $b$, $d$
      and/or $p$ to be stochastic processes.
    \item
      Another possible generalization could be to drop the assertion that the limit $L$ exists.
      Then, we see that the first case still only holds if $\limsup_tL_t=-\infty$ or $I=\infty$.
      Secondly, in the case of $I<\infty$ we still obtain a limit independent of $x$, only if
      $\liminf_tL_t=\infty$. Hence, only the third case changes, where we obtain bounds on the
      limit in terms of $\liminf_tL_t$ and $\limsup_tL_t$.
  \end{enumerate}
\end{remark}


\begin{proof}[Proof of Theorem \ref{thm:inhom-bdp}]\ \\
  First note that the assertions and Theorem \ref{thm:asymptotics-of-D-integral}
  imply that almost surely
  \begin{align*}
    L_t
      = \int_0^t(b_s-d_s)ds - \int_0^t\log\Big(\frac1{p_s}\Big)dD_s
     &\sim \iota h(t).
  \end{align*}
  (Since $h(t)=\Omega(t^\alpha)$ for some $\alpha>0$, in the case where
  $\beta\leq-1$, $\int_0^t\log(1/p_s)dD_s$ has either a finite limit or it
  lies in $O(t^{\alpha/2})\subset o(t^\alpha)$ such that in either case
  it does not contribute to the asymptotics of $L_t$. Otherwise, it is
  asymptotically equivalent to $\int_0^t\log(1/p_s)\kappa_sds$.)\\
  Now, we can apply Proposition \ref{prop:inhom-limit}:\\[0.5em]
  \emph{1.:} If $\iota=1$, $L_t\xrightarrow{t\to\infty}\infty$.
  Also, for almost every $\omega$ there is a $T(\omega)\in(0,\infty)$ such that
  $L_t(\omega)\geq(1-\varepsilon)h(t)$ for all $t\geq T(\omega)$. Thus,
  \begin{align*}
    I
     &\leq I_T + \int_T^\infty e^{-(1-\varepsilon)h(s)}b_sds
      < \infty.
  \end{align*}
  Hence, the second case of Proposition \ref{prop:inhom-limit} concludes that
  \begin{align*}
    \Pw(Z_t\xrightarrow{t\to\infty}0)
     &= \lim_{t\to\infty}\E_k[(1-1)^{Z_t}]
      = \E[(1-I^{-1})^k]
      < 1.
  \end{align*}
  \emph{2.:} If $\iota=-1$, it is clear, that $L_t\xrightarrow{t\to\infty}-\infty$
  and independently of the integral condition of \emph2. the first part of
  Proposition \ref{prop:inhom-limit} concludes
  \begin{align*}
    \Pw(Z_t\xrightarrow{t\to\infty}0)
     &= \E_k[\lim_{t\to\infty}(1-1)^{Z_t}]
      = 1.
  \end{align*}
  Otherwise, i.e. if $\iota=1$ but the integral condition holds,
  $L_t\xrightarrow{t\to\infty}\infty$ and analogously to \emph{1.} we
  find a finite random variable $T'$ such that $L_t\leq(1+\varepsilon)h(t)$
  for all $t\geq T'$ almost surely. Thus,
  \begin{align*}
    I
     &= \int_0^\infty e^{-L_s}b_sds
      \geq I_{T'} + \int_{T'}^\infty e^{-(1+\varepsilon)h(s)}b_sds
      = \infty.
  \end{align*}
  Then again, the first part of Proposition \ref{prop:inhom-limit} concludes the proof.
\end{proof}

\begin{remark}[Normalization function, rates of convergence]\label{rem:inhom-conv-rates}
  \begin{enumerate}
  \item There are two major cases in which a normalisation function
    $h$ as in Theorem \ref{thm:inhom-bdp} does not exist:
    \begin{enumerate}
    \item
      The integral $\ell(t):=\int_0^t (b_s - d_s - \kappa_s\log(\frac1{p_s}))ds$
      converges to a constant. Then, $\cZ$ will exhibit only a finite number
      of birth events almost surely and converge to a random variable, where
      the third part of Proposition \ref{prop:inhom-limit} provides a way to
      compute the limiting distribution.
    \item
      The integral $\ell$ oscillates too strongly -- e.g. $\ell(t) = t(1+\sin(t))$.
      This might happen in periodic models, which were briefly discussed in
      \cite{Kendall1948}. In this case, Lemma \ref{lem:inhom-duality} still
      holds, while Proposition \ref{prop:inhom-limit} as well as Theorem
      \ref{thm:asymptotics-of-D-integral} do not apply.
    \end{enumerate}
  \item In case \emph{2.} of Theorem \ref{thm:inhom-bdp}, for the
    convergence rates of the survival probability conditioned on
    $\cD_\infty$, the $\sigma$-algebra of the disaster times, we can
    estimate for arbitrary $k\geq1$, using the processes $(X_t)$ and
    $(L_t)$ from Lemma \ref{lem:inhom-duality} with $X_0=1$ and
    Bernoulli's inequality
    \begin{align*}
      -\frac1{h(t)}\log\Pw_k(Z_t>0|\cD_\infty)
      &= -\frac1{h(t)}\log\big(1-(1-X_t)^k\big)\\[1em]
      &\sim -\frac1{h(t)}\log(X_t)
        = \frac1{h(t)}\log\Big(e^{-L_t} + \int_0^te^{-L_s}b_sds\Big)\\[1em]
      &\sim \frac1{h(t)}\max\Bigg\{-L_t,\ \log\Big(\int_0^te^{-L_s}b_sds\Big)\Bigg\},
    \end{align*}
    where $\sim$ denotes asymptotic equivalence, i.e. $f\sim g\Leftrightarrow f(t)/g(t)\to1$
    as $t\to\infty$. Then, by Theorem \ref{thm:asymptotics-of-D-integral}, $\frac{L_t}{h(t)}\to\iota$,
    while for $0<\delta<\varepsilon$ and $t\geq t_0$ large enough it holds
    \begin{align*}
      \int_{t_0}^te^{-(1+\delta)h(s)}b_sds
      &\leq \int_{t_0}^te^{-L_s}b_sds
        \leq \int_{t_0}^te^{-(1-\delta)h(s)}b_sds.
    \end{align*}
    With more knowledge on $h$ and $b$, this approach can be used to compute bounds
    on the convergence rates.
  \end{enumerate}
\end{remark}

\appendix

\section{Large Deviations}\label{sec:ldp}

In Theorem \ref{thm:convergence-$p$-jump-pros}, we make use of large deviations for
Poisson processes. These can be e.g.\ read from \cite{DemboZeitouni1998}, Exercise
5.2.12.

\begin{lemma}[Large deviations for a Poisson process]\label{lem:ld-PP}
  Let $P =(P_t)_{t\geq 0}$ be a unit rate Poisson process.
  \begin{align}\label{lld:ld1}
    \lim_{t\to\infty} \tfrac 1t \log \mathbb P(P_t \geq xt) &= -(1 - x + x\log x) \text{ for }x>1,\\
    \label{lld:ld2}
    \lim_{t\to\infty} \tfrac 1t \log \mathbb P(P_t \leq xt) &= -(1 - x + x\log x)  \text{ for }x\in(0,1).
  \end{align}
  Moreover, for any $x\in(0,1)$
  \begin{align}\label{lld:ld3}
    & \lim_{t\to\infty} \tfrac 1t \log \mathbb P( P_s\leq xs \text{ for all }s\leq t) 
      = -(1 - x + x\log x).
  \end{align}
\end{lemma}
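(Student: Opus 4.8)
The plan is to treat the two pointwise estimates \eqref{lld:ld1}--\eqref{lld:ld2} and the sample-path estimate \eqref{lld:ld3} separately, the former being the classical Cram\'er bounds for the Poisson distribution and the latter requiring an extra combinatorial input. For \eqref{lld:ld1} I would use the Chernoff bound: since $\E[e^{\theta P_t}]=e^{t(e^\theta-1)}$, for $x>1$ and $\theta>0$ one has $\Pw(P_t\geq xt)\leq e^{-t(\theta x-e^\theta+1)}$, and optimizing at $\theta=\log x$ gives $\limsup_t\tfrac1t\log\Pw(P_t\geq xt)\leq-(1-x+x\log x)$. The matching lower bound follows from the single-term estimate $\Pw(P_t\geq xt)\geq\Pw(P_t=\lceil xt\rceil)=e^{-t}t^{\lceil xt\rceil}/\lceil xt\rceil!$ together with Stirling's formula, which yields $\tfrac1t\log$ of the right-hand side $\to-(1-x+x\log x)$. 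The estimate \eqref{lld:ld2} for $x\in(0,1)$ is entirely analogous, now taking $\theta=\log x<0$ and the single weight at $n=\lfloor xt\rfloor$; alternatively both follow directly from \cite[Exercise 5.2.12]{DemboZeitouni1998}. Write $I(x):=1-x+x\log x$ for the common rate function.

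For \eqref{lld:ld3} the upper bound is immediate: $\{P_s\leq xs\text{ for all }s\leq t\}\subseteq\{P_t\leq xt\}$, so \eqref{lld:ld2} gives $\limsup_t\tfrac1t\log\Pw(P_s\leq xs\,\forall s\leq t)\leq-I(x)$. The work lies in the matching lower bound, for which I would first reduce the event to a constraint on the jump times. Conditionally on $\{P_t=n\}$ the $n$ jump times are distributed as the order statistics $\tau_1<\dots<\tau_n$ of $n$ independent uniforms on $[0,t]$, and since $(P_s)$ is constant between jumps while $s\mapsto xs$ increases, the binding constraints occur exactly at the jump instants, so $\{P_s\leq xs\,\forall s\leq t\}$ holds iff $\tau_k\geq k/x$ for every $k\leq n$; in particular the event has probability $0$ unless $n\leq xt$. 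The key combinatorial input is the ballot-type identity for uniform order statistics, $\Pw(\tau_k\geq k/x\text{ for all }k\leq n)=1-n/(xt)$ whenever $n\leq xt$, which I would establish by induction on $n$ (or cite Tak\'acs). Summing over $n$ then yields the exact formula
\begin{align*}
  \Pw(P_s\leq xs\text{ for all }s\leq t)
   &= \E\Big[\big(1-\tfrac{P_t}{xt}\big)^+\Big]
    = \sum_{n=0}^{\lfloor xt\rfloor}\Big(1-\tfrac{n}{xt}\Big)e^{-t}\frac{t^n}{n!}.
\end{align*}

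From this identity the lower bound is straightforward. Fixing a small $\delta\in(0,x)$ and keeping only the single summand $n=\lfloor(x-\delta)t\rfloor$, for which $1-n/(xt)\geq\delta/x$, I obtain
\begin{align*}
  \Pw(P_s\leq xs\text{ for all }s\leq t)
   &\geq \frac{\delta}{x}\,e^{-t}\frac{t^{\lfloor(x-\delta)t\rfloor}}{\lfloor(x-\delta)t\rfloor!},
\end{align*}
and the same Stirling estimate as above gives $\liminf_t\tfrac1t\log\Pw(P_s\leq xs\,\forall s\leq t)\geq-I(x-\delta)$. Letting $\delta\downarrow0$ and using the continuity of $I$ on $(0,1)$ closes the gap with the upper bound and proves \eqref{lld:ld3}.

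I expect the main obstacle to be the combinatorial identity $\Pw(\tau_k\geq k/x\,\forall k\leq n)=1-n/(xt)$ together with the reduction of the sample-path event to these jump-time constraints: one must argue carefully that the constraints are tightest precisely at the jump instants and verify the ballot formula, where the edge case $n=xt$, in which the right-hand side vanishes, serves as a useful consistency check. Should this exact identity prove awkward, an alternative route for the lower bound is an exponential change of measure to a Poisson process of rate $x$, under which the barrier $xs$ becomes the typical slope; there the difficulty shifts to showing that the probability of the process staying below its own mean decays only sub-exponentially, which is again a ballot/reflection estimate.
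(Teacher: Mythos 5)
Your proof is correct, but it takes a genuinely different route from the paper, which disposes of the lemma in two lines by citing Cram\'er's theorem for \eqref{lld:ld1}--\eqref{lld:ld2} and Exercise 5.2.12 of Dembo--Zeitouni (a sample-path large deviation statement, applied after rescaling with $\phi(t)=xt$) for \eqref{lld:ld3}. Your treatment of the pointwise bounds via Chernoff plus a single-term Stirling estimate is the standard unpacking of Cram\'er and is fine. The real divergence is in \eqref{lld:ld3}: instead of invoking a path-space LDP, you reduce the event $\{P_s\leq xs\ \forall s\leq t\}$ to the jump-time constraints $\tau_k\geq k/x$ (correct, since $P$ is constant between jumps and the barrier is increasing, so the binding instants are the jumps, and the event forces $P_t\leq xt$), and then apply the ballot identity $\Pw(\tau_k\geq k/x\ \forall k\leq n\mid P_t=n)=1-n/(xt)$ for uniform order statistics. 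That identity is exactly Daniels' theorem on the one-sided boundary crossing of the empirical distribution function (equivalently $\Pw(\sup_s F_n(s)/s\leq c)=1-1/c$), so it is available by citation to Daniels or Tak\'acs, or by the induction you sketch; it yields the exact formula $\Pw(P_s\leq xs\ \forall s\leq t)=\E[(1-P_t/(xt))^+]$, from which your single-term lower bound at $n=\lfloor(x-\delta)t\rfloor$ and the limit $\delta\downarrow0$ close the argument cleanly. What your route buys is a self-contained, elementary proof and in fact a stronger statement (an exact non-asymptotic formula for the barrier probability, not just its exponential rate); what it costs is the need to establish the ballot identity, which is the one nontrivial combinatorial input and which you correctly identify as the main obstacle. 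Either route is valid; yours would make the appendix independent of the Dembo--Zeitouni exercise.
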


\begin{proof}
  The first two assertions, \eqref{lld:ld1} and \eqref{lld:ld2} are a consequence of Cr\'amer's
  Theorem \cite[Theorem 2.2.3, p.27]{DemboZeitouni1998}. Moreover, the large deviation result
  \eqref{lld:ld3} is an application of \cite{DemboZeitouni1998}, Exercise 5.2.12, by rescaling
  and choosing $\phi(t)=xt$.
\end{proof}


\begin{thebibliography}{}

\bibitem[Athreya and Kaplan, 1976]{AthreyaKaplan1976}
Athreya, K.~B. and Kaplan, N. (1976).
\newblock Limit theorems for a branching process with disasters.
\newblock {\em Journal of Applied Probability}, 13(3):466--475.

\bibitem[Athreya and Ney, 1972]{AthreyaNey1972}
Athreya, K.~B. and Ney, P.~E. (1972).
\newblock {\em Branching Processes}.
\newblock Springer.

\bibitem[Bansaye et~al., 2013]{Bansaye2013}
Bansaye, V., Carlos, J., Millan, P., and Smadi, C. (2013).
\newblock On the extinction of continuous state branching processes with
  catastrophes.
\newblock {\em Elec. J. Probab.}, 18(106):1--31.

\bibitem[Bartoszynski et~al., 1989]{Bartoszynski1989}
Bartoszynski, R., Biihler, W.~J., Chan, W., and Pearl, D.~K. (1989).
\newblock Population processes under the influence of disasters occurring
  independently of population size.
\newblock {\em J. Math. Biol.}, 27:167--178.

\bibitem[Bladt and Nielsen, 2017]{BladtNielsen2017}
Bladt, M. and Nielsen, B.~F. (2017).
\newblock {\em Regeneration and Harris Chains}, pages 387--435.
\newblock Springer US, Boston, MA.

\bibitem[Brockwell, 1985]{Brockwell1985}
Brockwell, P.~J. (1985).
\newblock The extinction time of a birth, death and catastrophe process and of
  a related diffusion model.
\newblock {\em Advances in Applied Probability}, 17(1):42--52.

\bibitem[Brockwell et~al., 1982]{Brockwell1982}
Brockwell, P.~J., Gani, J., and Resnick, S.~I. (1982).
\newblock Birth, immigration and catastrophe processes.
\newblock {\em Advances in Applied Probability}, 14(4):709--731.

\bibitem[B{\"u}hler and Puri, 1989]{BühlerPuri1989}
B{\"u}hler, W.~J. and Puri, P.~S. (1989).
\newblock The linear birth and death process under the influence of
  independently occurring disasters.
\newblock {\em Probability Theory and Related Fields}, 83(1):59--66.

\bibitem[Casanova et~al., 2016]{CasanovaEtAl2016}
Casanova, A.~G., Kurt, N., Wakolbinger, A., and Yuan, L. (2016).
\newblock An individual-based model for the lenski experiment, and the
  deceleration of the relative fitness.
\newblock {\em Stochastic Processes and their Applications}, 126(8):2211 --
  2252.

\bibitem[Chaganty and Sethuraman, 1993]{ChagantySethuraman1993}
Chaganty, N.~R. and Sethuraman, J. (1993).
\newblock Strong large deviation and local limit theorems.
\newblock {\em The Annals of Probability}, 21(3):1671--1690.

\bibitem[Dembo and Zeitouni, 1998]{DemboZeitouni1998}
Dembo, A. and Zeitouni, O. (1998).
\newblock {\em Large Deviations Techniques and Applications}.
\newblock Applications of mathematics. Springer.

\bibitem[Etheridge, 2001]{Etheridge2001}
Etheridge, A. (2001).
\newblock {\em An introduction to superprocesses}.
\newblock American Mathematical Society.

\bibitem[Ethier and Kurtz, 1986]{EK86}
Ethier, S.~N. and Kurtz, T.~G. (1986).
\newblock {\em Markov processes. Characterization and convergence}.
\newblock Wiley Series in Probability and Mathematical Statistics: Probability
  and Mathematical Statistics. John Wiley \& Sons Inc., New York.

\bibitem[Feller, 1971]{Feller1971}
Feller, W. (1971).
\newblock {\em An introduction to probability theory and its applications.
  {V}ol. {II}.}
\newblock Second edition. John Wiley \& Sons Inc., New York.

\bibitem[Harris, 1963]{Harris1963}
Harris, T.~E. (1963).
\newblock {\em The theory of branching processes}.
\newblock Die Grundlehren der Mathematischen Wissenschaften, Bd. 119.
  Springer-Verlag, Berlin.

\bibitem[Jansen and Kurt, 2014]{JansenKurt2014}
Jansen, S. and Kurt, N. (2014).
\newblock On the notion(s) of duality for {M}arkov processes.
\newblock {\em Probability Surveys}, 11:59--120.

\bibitem[Kaplan et~al., 1975]{KaplanEtAl1975}
Kaplan, N., Sudbury, A., and Nilsen, T.~S. (1975).
\newblock A branching process with disasters.
\newblock {\em Journal of Applied Probability}, 12(1):47--59.

\bibitem[Kendall, 1948]{Kendall1948}
Kendall, D.~G. (1948).
\newblock On the generalized 'birth-and-death' process.
\newblock {\em Ann. Math. Statist.}, 19(1):1--15.

\bibitem[Kumar et~al., 1998]{KumarII}
Kumar, B.~K., Vijayakumar, A., and Thilaka, B. (1998).
\newblock Multitype branching processes with disasters ii: Total sojourn time
  and number of deaths.
\newblock {\em Mathematical and Computer Modelling}, 28(11):103--114.

\bibitem[Lambert, 2008]{Lambert2008}
Lambert, A. (2008).
\newblock Population dynamics and random genealogies.
\newblock {\em Stochastic Models}, 24:45--163.

\bibitem[Pakes, 1986]{Pakes1986}
Pakes, A.~G. (1986).
\newblock The markov branching-castastrophe process.
\newblock {\em Stochastic Processes and their Applications}, 23(1):1 -- 33.

\bibitem[Pakes and Pollett, 1989]{Pakes1989}
Pakes, A.~G. and Pollett, P. (1989).
\newblock The supercritical birth, death and catastrophe process: limit
  theorems on the set of extinction.
\newblock {\em Stochastic Processes and their Applications}, 32(1):161 -- 170.

\bibitem[Peng et~al., 1993]{Peng1993}
Peng, N., Pearl, D.~K., Chan, W., and Bartoszyński, R. (1993).
\newblock Linear birth and death processes under the influence of disasters
  with time-dependent killing probabilities.
\newblock {\em Stochastic Processes and their Applications}, 45(2):243 -- 258.

\bibitem[Polfeldt, 1969]{Polfeldt1969}
Polfeldt, T. (1969).
\newblock Integrating regularly varying functions with exponent -1.
\newblock {\em SIAM Journal on Applied Mathematics}, 17(5):904--908.

\bibitem[Seneta, 1976]{Seneta1976}
Seneta, E. (1976).
\newblock {\em Regularly Varying Functions}.
\newblock Number Nr. 508 in Lecture Notes in Mathematics. Springer-Verlag.

\bibitem[Thilaka et~al., 1998]{Thilaka1998}
Thilaka, B., Kumar, B., and Vijayakumar, A. (1998).
\newblock Multitype branching processes with disasters {I}: The number of
  particles in the system.
\newblock {\em Mathematical and Computer Modelling}, 28(11):87 -- 102.

\end{thebibliography}

\end{document}